\newtheorem{theorem}{Theorem}[section]
\newtheorem{definition}[theorem]{Definition}
\newtheorem{lemma}[theorem]{Lemma}
\newtheorem{corollary}[theorem]{Corollary}
\newtheorem{remark}[theorem]{Remark}
\newtheorem{proposition}[theorem]{Proposition}
\newtheorem{problem}[theorem]{Problem}
\numberwithin{equation}{section}
\def\Curs{\widetilde{C}_{\omega}^s}
\def\Cure{\widetilde{C}_{\omega}^e}
\def\DO{\widetilde{\delta}_{\omega}}
\def\LC{{\mathrm{LC}}_n}
\def\LO{{\mathrm{LC}}_n^o}
\def\TV{\mathrm{TV}}
\def\VO{\widetilde{V}_{\omega}}
\def\ball{B^n_2}
\def\sphere{S^{n-1}}
 \def\R{\mathbb{R}}
 \def\dom{{\mathrm {dom}}}
 \def\sphere{S^{n-1}}
 \def\LP{\mathscr{L}_e^{+}(\mu)}
\begin{document}

\title{The dual Orlicz curvature measures for log-concave functions and their related Minkowski problems \footnote{Keywords: dual curvature measure, dual Minkowski problem, variational formula, log-concave functions.}}

\author{Niufa Fang,   Deping Ye,  Zengle Zhang and Yiming Zhao }
\date{}

\maketitle

\begin{abstract}  
The variation of a class of Orlicz moments with respect to the Asplund sum within the class of log-concave functions is demonstrated. Such a variational formula naturally leads to a family of dual Orlicz curvature measures for log-concave functions. They are functional analogs of dual (Orlicz) curvature measures for convex bodies. Partial existence results for the functional dual Orlicz Minkowski problem are shown.
\end{abstract}

\vskip 0.5cm
\section{Introduction}
\vskip 0.3cm

Let $\LC$ be the set of upper semi-continuous log-concave functions on $\R^n$ with nonzero finite $L^1$ norm and $\mathcal{K}^n$ be the set of convex bodies (compact convex sets with nonempty interiors) in $\mathbb{R}^n$. The set $\mathcal{K}^n$ can be naturally viewed as a subset of $\LC$ via the map $K\mapsto \mathbf{1}_K$ that takes a convex body $K$ to its indicator function $\mathbf{1}_K$. Recent years have witnessed extensions of many geometric results on $\mathcal{K}^n$ to their corresponding ones on $\LC$. Among those are the functional analogs of geometric isoperimetric (or reverse isoperimetric) inequalities: the Pr\'ekopa-Leindler inequality (functional Brunn-Minkowski) (see, for example, \cite[Theorem 7.1.2]{Gar02}),  the functional Santal\'o type inequality \cite{AKM04,Ball-thesis, FM08}, the affine Sobolev inequality (functional Petty projection inequality) \cite{HS2009,LYZ2002, Zhang1999}, the functional Zhang projection inequality \cite{ABM}, the functional differential affine isoperimetric inequality \cite{AKSW12}, and many more. The beautiful survey \cite{Col17} by Colesanti is a rich source of ``many surprising analogies between the theory of convex bodies and that of log-concave functions''. See also \cite{CLM2022, CLM2023} for recent efforts to establish the Hadwiger theorem on the set of convex functions. Parallel to the study of isoperimetric inequalities comparing different geometric invariants is the study of geometric measures obtained from properly differentiating geometric invariants of interest. This line of research goes back to the classical Minkowski problem \cite{CY1976,Minkowski1987, Minkowski1903, Pogorelov1978} and Aleksandrov's variational formula \cite{Aleksandrov1939} which justifies the heuristic idea that ``surface area measure is the derivative of volume''. The functional analog of Aleksandrov's variational formula came much later and took the combined effort of several works in the last decade \cite{CF13,CK15,Rot20,Rot22}. Surprisingly, as first revealed by Colesanti-Fragal\`a \cite{CF13}, different from its geometric counterpart, the corresponding variational formula for $L^1$ norm of a log-concave function with respect to the Asplund sum leads to two measures---one on $\R^n$ and one on $\sphere$. The characterization problem of the latter has a background in complex analysis and can be seen as the functional version of the classical Minkowski problem, which was completely solved within the class of essential continuous log-concave functions by  Cordero-Erausquin and Klartag in \cite{CK15}.

Roughly speaking, there are two variants of the classical Brunn-Minkowski theory in convex geometry. One is the $L_p$-Brunn Minkowski theory introduced by Lutwak \cite{Lut93}. Lutwak's groundbreaking work marks the start of a fruitful exploration (see, for example, \cite{BLYZ13,CHLL2020,CW06, HS20092}) in the last three decades. The $L_p$-Brunn-Minkowski theory also harbors one of the most sought-after conjectures in the field of convex geometric analysis, the log-Brunn-Minkowski conjecture. The log-Brunn-Minkowski conjecture, if proven, is a strengthening of the celebrated Brunn-Minkowski inequality and has been verified in dimension $2$ \cite{BLYZ12} as well as in many special cases in higher dimensions (see, for example, \cite{CHLL2020,vanhandel2002,KM2002, Milman2021,Stancu2003}). Functional analogs of the $L_p$ Minkowski problem were studied recently for $p\in (0,1)$ by Rotem \cite{Rot20} and for $p\in (1,\infty)$ by Fang-Xing-Ye \cite{FXY20+}.

Another variant of the Brunn-Minkowski theory is the dual Brunn-Minkowski theory, which can be credited for the eventual resolution of the Busemann-Petty problem \cite{Gardner1994,GKS1999, Lutwak1988, Zhang19992}. Dual quermassintegrals (averages of lower-dimensional sectional areas of a convex body) are the fundamental geometric invariants in the dual theory. In a pioneering work \cite{HLYZ16}, Huang-Lutwak-Yang-Zhang established the variational formula for dual quermassintegrals, which, for the first time, introduced geometric measures in the dual theory. They are known as dual curvature measures and their characterization problems are known as the dual Minkowski problems \cite{BHP2018, HLYZ16,LSW2020,Zhao2018}.  The functional analogs of dual quermassintegrals are nothing but moments (of different orders) of a log-concave function. Their variational formulas and the Minkowski problems they led to were studied in Huang-Liu-Xi-Zhao \cite{HLXZ}.

The Huang-Lutwak-Yang-Zhang approach to the variational formula is dual to Aleksandrov's, which does not require \emph{a priori} an isoperimetric inequality. This dual approach makes it possible to study variational formulas of ``volume'' functional in much greater generality. In particular, Gardner-Hug-Weil-Xing-Ye \cite{GHWXY19} and, subsequently, Gardner-Hug-Xing-Ye \cite{GHXY2020} demonstrated a family of variational formulas for a wide range of dual Orlicz volumes. Their works greatly expanded \cite{HLYZ16}. See, also, \cite{HLYZ2010,LYZ2010} (and the works citing them) for the Orlicz extension of the classical Brunn-Minkowski theory. 

It is worth pointing out that the dual (Orlicz) Brunn-Minkowski theory is not translation-invariant. Therefore in dealing with objects within this framework, we typically require some conditions regarding the relative position between the origin and the object under study.

The purpose of the current work is to demonstrate the variational formula for functional Orlicz moments within the class of log-concave functions under some mild assumptions. The other part of the current work studies the Minkowski problem for the dual Orlicz curvature measures generated by differentiating the Orlicz moments. It should be emphasized that it is often the case that functional results are more challenging to establish than their geometric counterparts, as convex bodies (via the natural identification) are only a very special type of log-concave functions.

Let $\mathscr{G}$ be the set of continuous functions $\omega: \R^n\setminus \{o\}\rightarrow (0, \infty)$ such that: 
\begin{itemize} 
\item[(A1):] $\omega$ is locally integrable in $\R^n$;

\item[(A2):] $\limsup_{|x|\rightarrow 0}|x|^n \omega(x)<\infty$, where $|x|$ denotes the Euclidean norm of $x\in\mathbb{R}^n$;
\item [(A3):] the following growth condition holds  \begin{align} \limsup_{|x|\rightarrow\infty} \frac{\ln \omega(x)} {|x|}=0. \label{condition-o-var}\end{align}  
\end{itemize} 
Note that since $\omega$ is continuous on $\R^n\setminus \{o\}$, Condition (A1) is equivalent to saying that $\omega$ is integrable on the centered unit ball. 

Let $\omega\in \mathscr{G}$ be fixed. For each $f\in \LC$, define its $\omega$-Orlicz moment by
\begin{align}\label{or-m-log}
\widetilde{V}_\omega(f)=\int_{\mathbb{R}^n}f(x)\omega(x)dx,
\end{align}
where the integration is with respect to the standard Lebesgue measure on $\R^n$.
As we shall see from Lemma \ref{Le.finiteness}, Conditions (A1) and (A3) ensure that $\widetilde{V}_\omega(f)$ is well-defined for each $f\in \LC$.

The notion of the Orlicz moment recovers many known quantities, both geometric and functional. 
In the functional setting, when $\omega\equiv1$, \eqref{or-m-log} simply becomes the $L^1$ norm of a log-concave function---an invariant that is the subject in the Pr\'{e}kopa-Leindler inequality, and the aforementioned variational formula that traces back to Colesanti-Fragal\`{a} \cite{CF13} and the work of Cordero-Erausquin and Klartag \cite{CK15}, and shown in full generality only recently by Rotem \cite{Rot22}. When $\omega(x)=|x|^{q-n}$ for some $q>0$, \eqref{or-m-log} becomes the $(q-n)$-th moment whose variational formula was studied in \cite{HLXZ}. To see that \eqref{or-m-log} is the natural extension of geometric invariants, one simply let $f=\mathbf{1}_K$ for some convex body $K\subset \mathbb{R}^n$ that contains the origin in its interior. In this case, when $\omega\equiv 1$, \eqref{or-m-log} becomes the volume of $K$. When $\omega(x)=|x|^{q-n}$ for some $q>0$, \eqref{or-m-log} becomes the dual quermassintegral of $K$. When $\omega$ is the Gaussian density, then \eqref{or-m-log} recovers the Gaussian volume of a convex body. For generic $\omega$, \eqref{or-m-log} recovers (special cases of) the general dual Orlicz volume $\widetilde{V}_\omega(\mathbf{1}_K)$ studied in \cite{GHWXY19, GHXY2020}. For simplicity, we will slightly abuse the notation and write $\widetilde{V}_\omega(K)$ instead of $\widetilde{V}_\omega(\mathbf{1}_K)$.

Using the dual variational approach pioneered by Huang-Lutwak-Yang-Zhang \cite{HLYZ16}, a version of \cite[Theorem 5.3]{GHWXY19} states: if $K\subset \mathbb{R}^n$ is a convex body that contains the origin in its interior and $L$ is a compact convex set, then   
\begin{align}
\label{eq 8.29.1}
\lim_{t\to 0^+}  \frac{\widetilde{V}_\omega(K+tL)-\widetilde{V}_\omega(K)}{t}=\int_{S^{n-1}}\frac{h_L(v)}{h_K(v)} d \widetilde{C}_\omega(K,v).
\end{align}   
Here $\widetilde{C}_\omega(K,\cdot)$  is a Borel measure on $\sphere$ given by 
\begin{align}\label{dual-curvature-convex body}
\widetilde{C}_\omega(K,\eta)=\int_{\{x\in\partial K:\nu_K(x)\in\eta\}}\langle x,\nu_K(x)\rangle \omega(x)d\mathcal{H}^{n-1}(x),
\end{align} 
for every Borel subset $\eta \subset \sphere$, where $\partial K$ denotes the boundary of $K$, $\langle \cdot, \cdot\rangle$ is the inner product, $\nu_K(\cdot)$ denotes the Gauss map of $K$, and $\mathcal{H}^{n-1}$ is the $(n-1)$-Hausdorff measure. 

Let $f=e^{-\phi}$ and $g=e^{-\psi}$ be two upper semi-continuous log-concave functions. For each $t>0$, one way to define the Asplund sum of $f$ and $g$ is via the formula
\begin{equation}
	(f\oplus t\cdot g)(z) = \sup_{x+t y=z} f(x)g(y)^t.
\end{equation}
The Asplund sum can be viewed as the functional version of the Minkowski addition. In particular, when $f=\mathbf{1}_K$ and $g=\mathbf{1}_L$ for some subsets $K,L\subset \R^n$, it is simple to see that $\mathbf{1}_K\oplus t\cdot \mathbf{1}_L=\mathbf{1}_{K+tL}$.

Equation \eqref{eq 8.29.1} shows that the $\omega$-Orlicz moment is differentiable in the subclass of characteristic functions with respect to the Asplund sum. It is natural to consider whether \eqref{eq 8.29.1} is valid for generic log-concave functions; that is, if $f\in \LC$ and $g$ is an upper-semi continuous log-concave function, does the following limit exist
\begin{equation}
	\lim_{t\to0^{+}}\frac{\VO(f\oplus t\cdot g)-\VO(f)}{t}
\end{equation}
 and if so, what is it? 
 
 Let $f=e^{-\phi} \in \LC$. Define its \emph{Euclidean dual $\omega$-Orlicz curvature measure} $\widetilde{C}_{\omega}^{e}(f,\cdot)$ to be the Borel measure on $\R^n$ given by
 \begin{equation}
 	\Cure(f, E)=\int_{\{x\in \R^n:\ \nabla\phi(x)\in E\}}e^{-\phi(x)}\omega(x)\,dx,
 \end{equation}
 for each Borel set $E\subset \R^n$, where $\nabla \phi$ is the gradient of $\phi$. In short, $\Cure(f, \cdot)$ is the push-forward of the measure $e^{-\phi(x)}\omega(x)dx$ by $\nabla \phi$. Note that since $\phi$ is convex, it is almost everywhere differentiable. Define the \emph{spherical dual $\omega$-Orlicz curvature measure} $\Curs(f, \cdot)$ to be the Borel measure on $\sphere$ given by
 \begin{equation}
 	\Curs(f, \eta)=\int_{\{x\in \partial K_f:\  \nu_{K_f}(x)\in \eta\}} e^{-\phi(x)}\omega(x)\,d\mathcal{H}^{n-1}(x),
 \end{equation}
 where $K_f =\overline{\{x\in \R^n: f(x)\neq 0\}}$ is the closure of the support of $f$ and $\nu_{K_f}$ is its Gauss map. Since $f$ is log-concave, the Gauss map $\nu_{K_f}$ is defined almost everywhere on $\partial K_f$ with respect to the $(n-1)$-dimensional Hausdorff measure $\mathcal{H}^{n-1}$.
 
 We will establish the following result.
 \begin{theorem}\label{thm 8.29.1}
 Let $\omega\in \mathscr{G}$. Assume $f=e^{-\phi}\in \LC$ is a log-concave function that achieves its maximum at the origin and 
\begin{align}\label{eq 8.29.3}
\limsup_{|x|\to 0} \frac{|f(x)-f(o)|}{|x|^{\alpha+1}}<\infty,
\end{align} 
for some $0<\alpha<1.$ Then, if  $g=e^{-\psi}$ is an upper semi-continuous, log-concave function with compact support and $g(o)>0$, we have
\begin{align}
\lim_{t\to0^{+}}\frac{\VO(f\oplus t\cdot g)-\VO(f)}{t}=\int_{\mathbb{R}^n}\psi^\ast(y)d\Cure(f;y)+
\int_{S^{n-1}}h_{K_g}(v)d\Curs(f;v).
\end{align}
\end{theorem}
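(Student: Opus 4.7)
The plan is to write $f\oplus t\cdot g = e^{-\phi_t}$, where $\phi_t$ is the infimal convolution $\phi_t(z)=\inf_{y}\{\phi(z-ty)+t\psi(y)\}$, whose effective domain equals $K_f+tK_g$. I would then decompose
\[
\VO(f\oplus t\cdot g)-\VO(f) \;=\; \underbrace{\int_{K_f}\bigl(e^{-\phi_t(z)}-e^{-\phi(z)}\bigr)\omega(z)\,dz}_{=:I(t)} \;+\; \underbrace{\int_{(K_f+tK_g)\setminus K_f}e^{-\phi_t(z)}\omega(z)\,dz}_{=:II(t)},
\]
and show separately that $I(t)/t$ produces the Euclidean term $\int_{\R^n}\psi^*(y)\,d\Cure(f;y)$ and $II(t)/t$ produces the spherical term $\int_{\sphere}h_{K_g}(v)\,d\Curs(f;v)$.

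For the interior term $I(t)$, I would first establish the two-sided bound
\[
-t\,\psi^*(\nabla\phi(z)) \;\le\; \phi_t(z)-\phi(z) \;\le\; t\,\psi(o),
\]
valid at every $z\in\mathrm{int}(K_f)$ at which $\phi$ is differentiable. The upper bound follows by choosing $y=o$ in the infimum defining $\phi_t$ (note $\psi(o)=-\ln g(o)<\infty$); the lower bound uses convexity, namely $\phi(z-ty)\ge \phi(z)-t\langle\nabla\phi(z),y\rangle$, together with the definition of $\psi^*$. Since $K_g$ is compact and $\psi$ is lower semi-continuous, $\psi^*(\nabla\phi(z))$ is attained at some $y^*\in K_g$; inserting $y^*$ into the infimum shows the upper bound is sharp to first order, so $(\phi_t(z)-\phi(z))/t\to -\psi^*(\nabla\phi(z))$ a.e., whence
\[
\frac{e^{-\phi_t(z)}-e^{-\phi(z)}}{t} \;\longrightarrow\; e^{-\phi(z)}\,\psi^*(\nabla\phi(z)) \qquad \text{a.e. on } K_f.
\]
Dominated convergence, with dominant $C\bigl(\psi(o)+\psi^*(\nabla\phi(z))\bigr)e^{-\phi(z)}\omega(z)$ and the push-forward description of $\Cure$, then yields $I(t)/t\to\int_{\R^n}\psi^*(y)\,d\Cure(f;y)$.

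For the boundary term $II(t)$, the shell $(K_f+tK_g)\setminus K_f$ has width of order $t$, and I would run a coarea/tube argument, parameterizing points of the shell as $z=x+t\xi$ with $x\in\partial K_f$ and $\xi\in K_g$ subject to $\langle\nu_{K_f}(x),\xi\rangle\le h_{K_g}(\nu_{K_f}(x))$. The Jacobian in the normal direction contributes a factor of order $t\,h_{K_g}(\nu_{K_f}(x))$, while the integrand $e^{-\phi_t(z)}\omega(z)$ converges uniformly to $e^{-\phi(x)}\omega(x)$ on short normal tubes by continuity of $\omega$ and the fact that $\phi_t(x+t\xi)\to \phi(x)$ as $t\to 0^+$. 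This produces
\[
\lim_{t\to0^+}\frac{II(t)}{t}=\int_{\partial K_f}e^{-\phi(x)}h_{K_g}(\nu_{K_f}(x))\omega(x)\,d\mathcal{H}^{n-1}(x)=\int_{\sphere}h_{K_g}(v)\,d\Curs(f;v),
\]
the last equality being the definition of $\Curs$ via the Gauss map.

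The main obstacles are two-fold. First, producing an honest $L^1$ dominating function for the interior term requires $\psi^*(\nabla\phi)\,e^{-\phi}\omega\in L^1(K_f)$; this is where assumption \eqref{eq 8.29.3} enters, since the Hölder flatness of $f$ at the origin forces $\nabla\phi(x)=O(|x|^{\alpha})$ near $o$, which combined with the admissible singularity $|x|^{-n}$ of $\omega$ allowed by (A2) renders the integrand integrable near the origin, while (A3) together with the super-linear growth of $\phi$ (forced by $f\in\LC$) handles the tail. Second, the tube analysis for $II(t)$ becomes delicate when $f$ fails to vanish continuously on $\partial K_f$ or when $K_f$ is unbounded so that $\partial K_f$ interacts with the tail of $\omega$; in that case, an approximation of $f$ by log-concave functions with controlled boundary behaviour, or a direct argument exploiting upper semi-continuity of $f$ and the convexity of $\phi$ near $\partial K_f$, is likely needed for the tube formula to apply rigorously.
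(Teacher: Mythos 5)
Your proposal takes a genuinely different route from the paper, and while the decomposition has the right first-order intuition, two of the key technical steps contain real gaps.

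\textbf{Comparison with the paper's approach.} The paper never decomposes the domain into interior and boundary shell. Instead it reduces to $g=\mathbf{1}_L$ with $L\in\mathcal{K}_o^n$ (Lemmas \ref{c=1}, \ref{cor-statement-i}), uses the layer--cake representation $\VO(f\oplus t\cdot\mathbf{1}_L)-\VO(f)=\int_0^{M_f}\big(\widetilde V_\omega(E_s+tL)-\widetilde V_\omega(E_s)\big)\,ds$, differentiates each level set via the geometric variational formula (Lemma \ref{Le.var.geo}), justifies the interchange of limit and $\int_0^{M_f}\!\!\,ds$ by dominated convergence in the level parameter $s$ (Lemmas \ref{Le.DCT1}, \ref{Le.DCT2}), and then identifies $\int_0^{M_f}\widetilde V_{1,\omega}(E_s,L)\,ds$ with the weighted total variation $\mathrm{TV}_{L,\omega}(f)$ via the coarea formula \eqref{coarea-formula} and the BV decomposition \eqref{eq yz1}. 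That last step is where the two terms (Euclidean and spherical) split out, without ever invoking a tube formula. Finally, general $g$ is handled by a Fatou/monotone-convergence squeeze between $C\mathbf{1}_{K_g}$ and $i^{-1}\mathbf{1}_{K_i}$ (Lemma \ref{i-to-ii}).

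\textbf{Gap 1: the dominating function for $I(t)/t$.} You propose the dominant $C\bigl(\psi(o)+\psi^*(\nabla\phi)\bigr)e^{-\phi}\omega$. But your two-sided estimate $-t\psi^*(\nabla\phi)\le\phi_t-\phi\le t\psi(o)$, fed through the mean value theorem, gives only
\[
\frac{|e^{-\phi_t(z)}-e^{-\phi(z)}|}{t}\le e^{-\phi(z)}\,e^{t\,\psi^*(\nabla\phi(z))}\max\bigl\{|\psi(o)|,\psi^*(\nabla\phi(z))\bigr\},
\]
and the factor $e^{t\psi^*(\nabla\phi(z))}$ is not controlled near $\partial K_f$, where $|\nabla\phi|\to\infty$ and hence $\psi^*(\nabla\phi)\gtrsim R|\nabla\phi|\to\infty$ (with $K_g\subset R\ball$). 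For instance with $f(x)=1-|x|$ on $\ball$, $g=\mathbf{1}_{\ball}$ and $\omega\equiv 1$, your proposed dominant is a constant but the bound the argument actually yields is $e^{t/(1-|z|)}$, which is unbounded. (In this particular example the true difference quotient does happen to be bounded, but this requires a finer argument than the two-sided inequality supplies.) Proposition \ref{Le.finite} guarantees $\psi^*(\nabla\phi)e^{-\phi}\omega\in L^1$, but it does not give integrability of the extra exponential factor. This is exactly the kind of difficulty the paper circumvents by working with the level sets $E_s$ (which are nice convex bodies) and dominating in $s$ rather than pointwise in $z$.

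\textbf{Gap 2: the boundary tube.} The parametrization $z=x+t\xi$ with $x\in\partial K_f$, $\xi\in K_g$ and a first-order Jacobian factor $t\,h_{K_g}(\nu_{K_f}(x))$ is not rigorously established; $\partial K_f$ is merely a convex boundary, $\nu_{K_f}$ is defined only $\mathcal{H}^{n-1}$-a.e., $K_f$ may be unbounded, and $f$ need not vanish continuously on $\partial K_f$ (the indicator case $f=\mathbf{1}_K$ is already an issue). You do flag this, but the fix you sketch (approximation or a direct usc argument) is not carried out and is not trivial; the paper's coarea/BV route avoids needing such a tube formula entirely.

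\textbf{Gap 3: the role of \eqref{eq 8.29.3}.} You place \eqref{eq 8.29.3} as the guarantor of integrability near the origin (giving $|\nabla\phi|=O(|x|^\alpha)$ to offset the $|x|^{-n}$ singularity allowed by (A2)). This is not where it bites. Near $o$ the integrand $\psi^*(\nabla\phi)e^{-\phi}\omega$ is already controlled by $C\omega$ (since $\nabla\phi$ is bounded there once $o\in\mathrm{int}(\dom\phi)$, which \eqref{eq 8.29.3} does ensure) and (A1) alone gives local integrability; and $|x|^{-n}$ by itself is not locally integrable, so (A2) plus $|\nabla\phi|=O(|x|^\alpha)$ would not suffice. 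In the paper, \eqref{eq 8.29.3} is used to secure the lower inclusion $E_s\supset c_0(M_f-s)^{1/(1+\alpha)}\ball$ for $s$ near $M_f$, which yields the integrable dominant $(M_f-s)^{-1/(1+\alpha)}$ for the level-set difference quotient (Lemma \ref{Le.DCT1}); it has no counterpart in the form you use it.

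In short: the decomposition is a legitimate alternative strategy, and the pointwise identity $\partial_t\phi_t|_{t=0^+}=-\psi^*(\nabla\phi)$ is correct, but both the interior dominated-convergence step and the boundary tube argument require substantially more work than you provide, and the role of \eqref{eq 8.29.3} would need to be rethought for that strategy.
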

Here, $h_{K_g}$ is the support function of $K_g$ (see \eqref{eq support function}) and $\psi^*$ is the Legendre transform of $\psi$ (see \eqref{def-dual}). Note that Condition (A2) on $\omega$ is used critically in the proof of Theorem \ref{thm 8.29.1}.

When $f$ and $g$ are characteristic functions, Theorem \ref{thm 8.29.1} recovers \eqref{eq 8.29.1}. We point out that when $\omega\equiv1$, Theorem \ref{thm 8.29.1} has been established in \cite{Rot22} without the extra condition \eqref{eq 8.29.3}. The special case $\omega\equiv 1$, however, does have a unique feature that makes the removal of \eqref{eq 8.29.3} immediate: the integral of a function remains invariant if the function $f$ is replaced by $f(\,\cdot\, +x_0)$. For a generic Orlicz moment $\widetilde{V}_{\omega}$, it is \emph{not} translation invariant. This phenomenon can also be observed in the geometric setting: note that \eqref{eq 8.29.1} only holds when $K$ contains the origin as an interior point, a restriction not required in the volume variational formula by Aleksandrov. We remark here that intuitively, the extra condition \eqref{eq 8.29.3} is used to ensure that the level sets of $f$ contain the origin as an interior point in some uniform fashion. If $f$ is $C^{1,\alpha}$ in a neighborhood of the origin, then condition \eqref{eq 8.29.3} is satisfied. Theorem \ref{thm 8.29.1} recovers the special case $\omega(x)=|x|^{q-n}$ obtained in \cite{HLXZ}. It is of great interest to see whether condition \eqref{eq 8.29.3} can be replaced by the mere assumption that the support of $f$ contains the origin as an interior point.

Minkowski problems are an important family of problems that can be posed for measures obtained through differentiating important invariants. The study of Minkowski problems is often intertwined with that of PDEs and thus attracts attention from those working in the field of fully nonlinear elliptic PDEs as well as those in the field of convex geometric analysis.  It is worth pointing out that the development of Monge-Amp\`{e}re equations in the previous century is largely parallel to that of the classical Minkowski problem, see, for example, \cite{Caffarelli1989, Caffarelli1990,CY1976, Pogorelov1978}. We also would like to point out \cite{BF2019, CHZ2019, CL2021,CL, LSW2020,LW13} for variants of the classical Minkowski problem (such as the $L_p$ Minkowski problem, the dual Minkowski problem, and \emph{etc}). The readers are referred to these works and the works citing them to see how Minkowski problems connect geometry with PDEs, and also how they connect with isoperimetric inequalities. 

For the second part of the current work, we consider the following Minkowski-type problem in the setting of log-concave functions. 

\noindent\textbf{The functional dual Orlicz Minkowski problem.} {\em Given a finite Borel measure $\mu$ on $\R^n$, find necessary and sufficient conditions on $\mu$ so that there exists $f\in \LC$ such that $\mu=\Cure(f, \cdot)$?}

When $\mu$ has a $C^{\infty}$ density $g\geq 0$, then the functional dual Orlicz Minkowski problem reduces to the following PDE on $\R^n$:
\begin{equation}
	g(\nabla \phi(x))\det(\nabla^2 \phi(x))=\omega(x)e^{-\phi(x)}.
\end{equation}
Here, one is looking for an unknown convex $\phi$ which recovers $f$ via the formula $f=e^{-\phi}$.

For each nonzero even finite Borel measure $\mu$ on $\R^n$, let $|\mu|=\int_{\mathbb{R}^n}d\mu$ and
\begin{equation}\label{c0}
	\zeta_{\mu} = \inf_{v\in \sphere} \int_{\R^n} |\langle x, v\rangle|d\mu(x).
\end{equation} It is simple to see that if $\mu$ is not concentrated in any proper subspace of $\R^n$, then $\zeta_{\mu}>0$. Moreover, if $\mu$ has finite first moment, \emph{i.e.}, $\int_{\mathbb{R}^n}|x|\,d\mu(x)<\infty,$ then $\zeta_{\mu}<\infty$. 

Let $\omega\in \mathscr{G}$ be fixed. Define $\mathcal{F}_\omega(t)=\VO\big(e^{-t|x|}\big)$ for $t>0$. In Section \ref{Se.dualMin}, we will show

\begin{theorem}
\label{thm 8.29.3}
	Let $\omega\in\mathscr{G}$ be an even function and $\mu$ be a nonzero even finite Borel measure on $\R^n$. If $\mu$ is not concentrated in any proper subspace, $\mu$ has finite first moment, and 
\begin{equation}\label{eq 8.29.4}
\liminf_{t\to 0^+ } \Big(\mathcal{F}_\omega(t) e^{-\frac{\zeta_{\mu}}{2|\mu|t}}\Big)=0,
\end{equation} 
then there exists $f\in \LC$ such that   
\begin{equation}\label{eq 8.29.7}
\mu= \widetilde{C}^{e}_{\omega}(f,\cdot).
\end{equation}
\end{theorem}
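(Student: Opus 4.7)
My approach is variational, using Theorem~\ref{thm 8.29.1} as the key first-variation formula, in the spirit of the dual Minkowski programme of Huang--Lutwak--Yang--Zhang and its functional adaptation in~\cite{HLXZ}. I will study the shift-invariant functional
\[
\Phi(\phi) \;=\; |\mu|\log \VO(e^{-\phi}) \;-\; \int_{\R^n}\phi^*(y)\,d\mu(y)
\]
on the class $\A$ of even, proper, lower semicontinuous convex $\phi:\R^n\to\R\cup\{+\infty\}$; the evenness restriction is natural since $\omega$ and $\mu$ are both even, and one verifies directly that $\Phi(\phi+c)=\Phi(\phi)$. The plan is to produce a maximizer $\phi_0\in\A$ whose log-concave image $f_0=e^{-\phi_0}$ has full support $K_{f_0}=\R^n$, so that the first-order optimality condition delivered by Theorem~\ref{thm 8.29.1} identifies $\Cure(f_0,\cdot)$ as a positive multiple of $\mu$.

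The main technical task is existence of the maximizer. To see that $\sup_\A \Phi<+\infty$, the Fenchel--Young inequality bounds $\int\phi^*\,d\mu$ from below in terms of the behaviour of $\phi$ on lines through the origin, and the finite-first-moment hypothesis on $\mu$ together with $\zeta_\mu>0$ turns this into quantitative control; testing on the one-parameter family $\phi(x)=t|x|+c$ (for which $\phi^*(y)=-c$ on $\overline{B_t}$ and $+\infty$ outside) reveals that hypothesis \eqref{eq 8.29.4} is precisely the condition that prevents the competing quantity $|\mu|\log\VO(e^{-\phi})$ from escaping to $+\infty$. For a maximizing sequence $(\phi_k)$ normalised so that $\phi_k(o)=0$, the resulting $L^1(\mu)$-control on $\phi_k^*$, combined with Legendre duality and $\zeta_\mu>0$, gives a uniform local upper bound on $\phi_k$ itself. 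A Blaschke-type pointwise compactness argument for convex functions extracts a subsequence converging to some $\phi_0\in\A$, while lower semicontinuity of $\VO$ and Fatou's lemma applied to the $\phi_k^*$ yield $\Phi(\phi_0)\geq\sup\Phi$.

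Next I must show that the maximizer $\phi_0$ is finite on all of $\R^n$, so that $K_{f_0}=\R^n$ and $\Curs(f_0,\cdot)\equiv 0$: if $\phi_0$ were only finite on a proper convex subset, a suitable perturbation extending its effective domain can be arranged to strictly increase $\Phi$, contradicting maximality. With the full-support property in hand, applying Theorem~\ref{thm 8.29.1} to $f_0$ against any admissible test log-concave function $g=e^{-\psi}$ (compactly supported, with $g(o)>0$) gives the one-sided derivative
\[
\left.\frac{d}{dt}\right|_{t=0^+}\Phi\bigl(-\log(f_0\oplus t\cdot g)\bigr) \;=\; \frac{|\mu|}{\VO(f_0)}\int_{\R^n}\psi^*\,d\Cure(f_0,\cdot)\;-\;\int_{\R^n}\psi^*\,d\mu,
\]
which is nonpositive by maximality. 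As $g$ ranges over all admissible test functions, $\psi^*$ sweeps out a class of convex functions rich enough (together with a scaling/Lagrange-multiplier argument exploiting the shift-invariance of $\Phi$) to separate finite measures with finite first moment, and one concludes $\Cure(f_0,\cdot)=c\,\mu$ for some $c>0$. A final rescaling of $f_0$ by an appropriate positive constant---using the identity $\Cure(\lambda f_0,\cdot)=\lambda\Cure(f_0,\cdot)$---then produces $f\in\LC$ with $\Cure(f,\cdot)=\mu$.

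The hardest aspect is the existence step: establishing $\sup\Phi<+\infty$ (where \eqref{eq 8.29.4} plays an essential role), extracting a convergent maximizing subsequence (the evenness of $\mu$, its finite first moment, and $\zeta_\mu>0$ are all needed to prevent the sequence from either escaping to infinity or degenerating), and excluding the degenerate case $\phi_0\equiv+\infty$ off a proper convex subset. A secondary concern is verifying that $f_0$ satisfies the regularity hypothesis \eqref{eq 8.29.3} required to apply Theorem~\ref{thm 8.29.1}; I expect this can be handled either through interior regularity for the associated Monge--Amp\`ere-type equation or via a mollification-then-limiting argument.
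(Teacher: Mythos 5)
Your plan shares the broad variational spirit of the paper's proof, but the route you propose has several genuine gaps that the paper's actual argument is designed to avoid, and I don't see how to close them without importing essentially the paper's machinery.

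The paper works with the constrained problem \eqref{optim.P}: minimize $\int \phi\,d\mu$ over even $\mu$-integrable $\phi\geq 0$ subject to $\widetilde V_\omega(e^{-\phi^*})\geq a$; after producing a minimizer $\phi_0$ and showing (Lemma~\ref{5.8}, where \eqref{eq 8.29.4} enters) that for large $a$ one has $\phi_0>0$ and the constraint is active, the Euler--Lagrange condition is extracted by perturbing $\phi_0\mapsto \phi_0+tg$ for \emph{arbitrary} even, bounded, compactly supported continuous $g$ and invoking Rotem's pointwise identity $\frac{d}{dt}\big|_{t=0}(\phi_0+tg)^*(x)=-g(\nabla\phi_0^*(x))$ (Lemma~\ref{rotem}). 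The solution is $f_0$ a suitable multiple of $e^{-\phi_0^*}$. Your unconstrained functional $\Phi$ and the paper's constrained problem are Lagrange-dual to each other, so that part of the plan is defensible; but the way you propose to obtain the Euler--Lagrange equation diverges sharply and creates three obstructions.

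First, you want to apply Theorem~\ref{thm 8.29.1} (the Asplund-sum variational formula) at the extremizer, but that theorem requires the regularity hypothesis \eqref{eq 8.29.3}, a $C^{1,\alpha}$-type condition at the origin, which a maximizer of $\Phi$ need not satisfy \emph{a priori}; neither interior Monge--Amp\`ere regularity nor a ``mollify-and-pass-to-the-limit'' argument is carried out, and both would be delicate (mollification leaves the class of log-concave functions or destroys the extremality). The paper sidesteps the issue entirely: Lemma~\ref{rotem} needs only $\phi_0(o)<\infty$ and holds almost everywhere, so no regularity of $f_0$ is required. Second, you need $K_{f_0}=\R^n$ to kill the spherical term in Theorem~\ref{thm 8.29.1}; the claim that a perturbation enlarging the effective domain ``can be arranged to strictly increase $\Phi$'' is asserted but not argued, and is in fact not obvious (indeed the paper's solution $e^{-\phi_0^*}$ may well have $\dom(\phi_0^*)\neq\R^n$; it is simply irrelevant there because perturbations $\phi_0+tg$ with bounded $g$ cannot change $\dom(\phi_0^*)$). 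Third, the Asplund-sum perturbation moves only in the direction $t\to 0^+$, so you obtain merely $\frac{|\mu|}{\widetilde V_\omega(f_0)}\int\psi^*\,d\widetilde C_\omega^e(f_0,\cdot)\leq \int\psi^*\,d\mu$ over the cone of convex $\psi^*$ bounded below; upgrading this one-sided inequality on a cone to the two-sided measure equality \eqref{eq 8.29.7} requires an argument you do not supply (note $-\psi^*$ is not in the cone unless $\psi^*$ is affine, so the usual ``test with $\pm g$'' trick fails). The paper's perturbation $\phi_0\mapsto\phi_0+tg$ with $t\in[-t_0,t_0]$ and $g$ of arbitrary sign is genuinely two-sided and directly yields the measure identity against all even compactly supported continuous test functions.

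In short: the compactness/existence portion of your proposal is essentially sound and parallels Lemmas~\ref{Le.existence}--\ref{5.8}, and you correctly identify where \eqref{eq 8.29.4} is needed. But replacing Lemma~\ref{rotem} by Theorem~\ref{thm 8.29.1} is not a cosmetic change: it introduces a regularity prerequisite, a full-support prerequisite, and a one-sided-only variational inequality, each of which is a genuine gap in the argument as written.
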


We remark here that for each fixed $\omega\in \mathscr{G}$, \eqref{eq 8.29.4} is a condition on $\mu$ to guarantee the existence of a solution. However, for many choices of $\omega$, it turns out that \eqref{eq 8.29.4} is satisfied for all $\mu$. These choices of $\omega$ include in particular any $\omega$ with at most polynomial growth (in particular, if $\omega$ is bounded). See Remark \ref{remark 8.29.1} for additional details. Thus, Theorem \ref{thm 8.29.3} recovers \cite[Theorem 2]{CK15} in the origin-symmetric case (without the conclusion that $f$ is essentially continuous) and \cite[Theorem 1.4]{HLXZ}. It is important to notice that in \cite{CK15, HLXZ}, a crucial tool to establish the existence is a Santal\'o type inequality. A major hurdle in the functional dual Orlicz Minkowski problem is the lack of such inequality in the Orlicz case. Lemma \ref{5.8} offers a way to go around the Santal\'o type inequalities. It is worth pointing out that the assumption that $\mu$ has finite first moment is necessary, due to Proposition \ref{Le.finite}.

Readers familiar with Orlicz Minkowski problems might wonder how \eqref{eq 8.29.7} can hold without an additional constant $\tau>0$ that typically appears in the solutions of an Orlicz Minkowski problem for convex bodies. It is important to note that in Orlicz Minkowski problems for convex bodies, one is looking for a convex body as a solution, or, equivalently, the indicator function $\mathbf{1}_K$. In the functional setting, one is looking for a log-concave function $f$ as a solution. One may rescale $f$ to be $\tau f$. However, $\tau \mathbf{1}_K$ is no longer an indicator function.

\section{Preliminaries}\label{Section-2}

In this section, we provide notations and basic results necessary for carrying out the current work. Since our results (on log-concave functions) depend heavily on the theory of convex bodies, we include basics regarding convex bodies first. The comprehensive book \cite{Sch14} by Schneider is a highly recommended reference if the readers wish to learn more. The second half of this section includes basics regarding log-concave (or, equivalently, convex) functions, as well as the theory of functions of bounded variations. The books \cite{AFP, EG2015} should be consulted for basics on BV functions and the book \cite{Roc70} is an excellent reference for log-concave functions.

Throughout the current paper, we work exclusively in the $n$-dimensional Euclidean space $\R^n$ with some fixed $n\geq 1$.  We write $|x|$ for the standard Euclidean norm and $\langle x, y\rangle$ for the standard inner product of $x,y\in \R^n$. We denote by $\ball(x,r)$ the Euclidean ball of radius $r$ centered at $x$. When $x=o$, we write $\ball(r)$ for simplicity. In addition, we shall write $\ball$ for the unit ball centered at $o$. By $V_n(\cdot)$, we mean the volume functional in $\mathbb{R}^n$. Let $E$ be a subset of $\R^n$. Its boundary, closure, interior, and complement will be denoted by $\partial E$, $\overline{E}$, $\mathrm{int}(E)$, and $E^c$, respectively. The $k$-dimensional Hausdorff measure restricted to $E$ (if exists) will be written as $\mathcal{H}^k|_E$ or simply abbreviated as $\mathcal{H}^k$ if there is no confusion. In integrals over the unit sphere, we will write $du$ instead of $d\mathcal{H}^{n-1}(u)$. We will use $\nu_K$ for the Gauss map of a closed, convex set $K$. Note that since $K$ is convex, its Gauss map $\nu_K$ is defined almost everywhere on $\partial K$ with respect to $\mathcal{H}^{n-1}$.

Let $\mathcal{K}^n$ be the set of convex bodies (compact, convex, with nonempty interiors) in $\R^n$. The set $\mathcal{K}^n_o$ denotes the subclass consisting of only those convex bodies containing the origin as an interior point. 

The support function $h_K:S^{n-1}\rightarrow \mathbb{R}$ of a compact, convex set $K\subset \R^n$ is defined by \begin{align} 
\label{eq support function}
h_K(v)=\max\{\langle v,y\rangle: y\in K\}.
\end{align} 
Occasionally, we will abuse the notation and treat $h_K$ as a function on $\R^n$ by extending it to be a 1-homogeneous function. For two compact convex subsets $K, L\subset \R^n$ and $a, b>0$, their Minkowski combination $aK+bL$ can be defined as 
\begin{equation}
	aK+bL = \{ax+by:x\in K, y\in L\}.
\end{equation}
Equivalently, one may define $aK+bL$ as the unique compact convex set whose support function is $ah_K+bh_L$.

Let $g\in C(\sphere)$, that is, $g$ is a continuous function on $\sphere$. If $g>0$, its Wulff shape $[g]$ is defined to be the convex body given by 
\begin{equation}
	[g]=\{x\in \R^n: \langle x, v\rangle\le g(v),\ \text{for all}\ v\in \sphere\}.
\end{equation}
Clearly, $[g]\in \mathcal{K}^n_o$. Also obvious is the fact that $h_{[g]}\leq g$. 

Another function associated with $K\in \mathcal{K}_{o}^n$ is the radial function: $\rho_K:\sphere \to(0,\infty)$ given by 
\begin{equation}
	\rho_K(u)=\max\{\lambda>0: \lambda u\in K\}.
\end{equation}
One may view $\rho_K$ as a function on $\mathbb{R}^n\setminus \{o\}$ by extending it to be homogeneous of degree $-1$. The radial function and the support function can be related via the polar body operation. The polar body of $K\in \mathcal{K}^n_o$,  denoted by $K^\circ$, is defined by
\begin{equation}
	K^{\circ}=\{y\in \R^n: \langle x, y\rangle \leq 1 \ \text{for all} \ x\in K\}.
\end{equation}
It is not hard to see that $h_K = \rho_{K^\circ}^{-1}$.

The Minkowski functional $\|\cdot \|_{K}$ of $K\in \mathcal{K}_o^n$ can be defined by $\|x\|_K = h_{K^\circ}(x)$. It is not hard to see that $\|\cdot\|_K$ is a (potentially asymmetric) norm on $\R^n$ and that $K$ is the unit ball centered at $o$ with respect to this norm. This offers another way to view the polar body operation: $K$ and $K^\circ$ are the centered unit balls of the Banach space $(\R^n, \|\cdot\|_K)$ and its dual space. 

Let $K\in \mathcal{K}_o^n$ and $g:\sphere \rightarrow \R$ be a continuous function. For $t\in \R$, define $h_t = h_K+tg$. Since $h_K>0$ and $h_K,g$ are continuous on $\sphere$, there exists $\delta_0>0$ such that for each $t\in (-\delta_0, \delta_0)$, the function $h_t$ is a positive continuous function on $\sphere$. Define 
\begin{equation}
\label{eq 9.5.1}
	K_t=[h_t], \text{ for all } t\in (-\delta_0, \delta_0).
\end{equation} 
An application of \cite[Lemma 4.3]{HLYZ16} implies that, for almost all $u\in \sphere$ with respect to $\mathcal{H}^{n-1}$,  
\begin{equation}
\label{eq 9.5.5}
	\left.\frac{d}{dt}\right|_{t=0} \rho_{K_t}(u) =  \rho_K(u)\frac{g\left(\nu_K(u\rho_K(u))\right)}{h_K(\nu_K(u\rho_K(u)))}.
\end{equation} 
Moreover, \cite[Lemma 4.2 and Lemma 2.8]{HLYZ16} imply the existence of $M>0$ such that 
\begin{equation}
\label{eq 9.5.4}
	\left|{\rho_{K_t}(u)-\rho_{K}(u)}\right|\leq M|t|,
\end{equation}
for almost all $u\in \sphere$ and all $t\in (-\delta_0,\delta_0)$. 

Let $\omega: \mathbb{R}^n\setminus \{o\}\rightarrow (0,\infty)$ be a continuous function satisfying Condition (A1). For each $K\in \mathcal{K}_o^n$, define its dual $\omega$-Orlicz volume by
\begin{equation}
	\label{GDVofK}
\widetilde{V}_\omega(K)  =\int_{K}\omega(x)dx.
\end{equation}
It is trivial to see that $\widetilde{V}_\omega(K)$ is well-defined since $\omega$ is locally integrable and $K\in \mathcal{K}_o^n$. Some special cases for  $\widetilde{V}_\omega(K)$  are the volume (when $\omega\equiv 1$), dual quermassintegrals (when  $\omega(x)=|x|^{q-n}$ for $q>0$), and the Gaussian measure of $K$ (when $\omega$ is the Gauss density). In particular, the dual Orlicz volume can be viewed as a weighted volume. It is worth mentioning that the introduction of  $\widetilde{V}_\omega(K)$ played important roles in the development of general dual Orlicz Brunn-Minkowski theory, see, for example, \cite{GHWXY19,GHXY2020,LW13, XY17}. 

For simplicity, we define $\overline{\omega}(t,u):[0,\infty)\times S^{n-1}$ by 
\begin{align}\label{overline-omega}
\overline{\omega}(t,u)
=\int_{0}^t\omega(ru)r^{n-1}dr.
\end{align}  
Since $\omega$ satisfies Condition (A1), by polar coordinates and the Fubini theorem, it is simple to see that $\overline{\omega}(t,u)$ is well-defined for almost all $u\in \sphere$ with respect to $\mathcal{H}^{n-1}$ and all $t\in [0,\infty)$. In particular, if $u\in \sphere$ is such that $\overline{\omega}(\cdot, u)$ is defined on $[0,\infty)$, then by the fundamental theorem of calculus, one has, for every $t>0$,
\begin{equation}
\label{eq 9.5.3}
	\frac{\partial}{\partial t} \overline{\omega}(t,u) = \omega(tu)t^{n-1}.
\end{equation}

We recall the following version of the variational formula established in \cite[Theorem 5.3]{GHWXY19}. Note that the restriction on $\omega$ is slightly loosened when compared to that in \cite{GHWXY19}. We therefore provide a short proof for completeness.

\begin{lemma}\label{Le.var.geo}
Let $\omega:\R^n\setminus \{o\}\rightarrow (0,\infty)$ be a continuous function, $K\in \mathcal{K}^n_o$, and $g\in C(\sphere)$. Assume $\omega$ satisfies Condition (A1) and define $K_t=[h_t]=[h_K+tg]$. Then 
\begin{align}\label{Eq.V1,phi(K,L)}
\lim_{t\to 0}\frac{\widetilde{V}_\omega(K_t)-\widetilde{V}_\omega(K)}{t}
=\int_{\sphere}\frac{g(v)}{h_K(v)}d\widetilde{C}_\omega(K,v),\end{align}
where $\widetilde{C}_\omega(K,\cdot)$, a finite Borel measure on $\sphere$ defined as in \eqref{dual-curvature-convex body}, is the dual Orlicz curvature measure of $K$. 
\end{lemma}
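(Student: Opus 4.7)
The natural approach is to write everything in polar coordinates and then differentiate under the integral sign. Using Condition (A1) together with Fubini, for every $K\in \mathcal{K}^n_o$ one has
\begin{equation}
\widetilde{V}_\omega(K)=\int_{\sphere}\int_0^{\rho_K(u)}\omega(ru)r^{n-1}\,dr\,du=\int_{\sphere}\overline{\omega}(\rho_K(u),u)\,du,
\end{equation}
and the same identity with $K$ replaced by $K_t$ for $t$ in a small neighborhood of $0$. Thus the first step is to rewrite the difference quotient as
\begin{equation}
\frac{\widetilde{V}_\omega(K_t)-\widetilde{V}_\omega(K)}{t}=\int_{\sphere}\frac{\overline{\omega}(\rho_{K_t}(u),u)-\overline{\omega}(\rho_K(u),u)}{t}\,du,
\end{equation}
and justify interchanging limit and integral.

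For the pointwise limit, I would combine the chain rule with \eqref{eq 9.5.3} and the radial variational formula \eqref{eq 9.5.5}: for almost every $u\in \sphere$ (with respect to $\mathcal{H}^{n-1}$),
\begin{equation}
\lim_{t\to 0}\frac{\overline{\omega}(\rho_{K_t}(u),u)-\overline{\omega}(\rho_K(u),u)}{t}=\omega(\rho_K(u)u)\rho_K(u)^{n-1}\cdot\rho_K(u)\frac{g(\nu_K(u\rho_K(u)))}{h_K(\nu_K(u\rho_K(u)))}.
\end{equation}

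The crux is producing a uniform integrable majorant so that dominated convergence applies. Here I would exploit that $K\in \mathcal{K}^n_o$ gives constants $0<r_K<R_K$ with $r_K\le \rho_K(u)\le R_K$ for all $u\in \sphere$. Combined with the uniform Lipschitz estimate \eqref{eq 9.5.4}, for $|t|$ sufficiently small there exist $0<r_0<R_0$ independent of $u$ and $t$ such that both $\rho_K(u),\rho_{K_t}(u)\in[r_0,R_0]$. Since $\omega$ is continuous on $\R^n\setminus\{o\}$, it is bounded on the compact annulus $\{x:r_0\le |x|\le R_0\}$; together with the mean value theorem applied to $s\mapsto\overline{\omega}(s,u)$ and \eqref{eq 9.5.4}, this yields
\begin{equation}
\left|\frac{\overline{\omega}(\rho_{K_t}(u),u)-\overline{\omega}(\rho_K(u),u)}{t}\right|\le M\sup_{r\in[r_0,R_0],\,u\in\sphere}\omega(ru)r^{n-1},
\end{equation}
a constant bound on $\sphere$. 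Dominated convergence then gives
\begin{equation}
\lim_{t\to 0}\frac{\widetilde{V}_\omega(K_t)-\widetilde{V}_\omega(K)}{t}=\int_{\sphere}\omega(\rho_K(u)u)\rho_K(u)^n\frac{g(\nu_K(u\rho_K(u)))}{h_K(\nu_K(u\rho_K(u)))}\,du.
\end{equation}

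The final step is to convert this spherical integral into an integral against $\widetilde{C}_\omega(K,\cdot)$. I would do this via the radial Gauss map $\alpha_K:u\mapsto \nu_K(\rho_K(u)u)$: parametrize $\partial K$ by $x=\rho_K(u)u$, use $d\mathcal{H}^{n-1}(x)=\rho_K(u)^n\langle x,\nu_K(x)\rangle^{-1}du$ on the portion where $\nu_K$ is defined, and note $\langle x,\nu_K(x)\rangle=h_K(\nu_K(x))$. Substituting into \eqref{dual-curvature-convex body} rewrites the right-hand side of \eqref{Eq.V1,phi(K,L)} exactly as the displayed integral above, completing the proof. The main obstacle is the uniform majorant in the dominated convergence step; the lower bound $r_0>0$ on $\rho_{K_t}(u)$ is essential to avoid the singularity of $\omega$ at the origin, and this is precisely why $K$ must contain $o$ in its interior.
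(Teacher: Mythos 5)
Your proposal is correct and follows essentially the same route as the paper: polar coordinates, a pointwise limit via the fundamental theorem of calculus and \eqref{eq 9.5.5}, a uniform bound obtained from \eqref{eq 9.5.4} together with the compact annulus $\{x: r_0\le |x|\le R_0\}$ (which is where $K\in\mathcal{K}^n_o$ is used), dominated (bounded) convergence, and the standard change of variables converting the spherical integral into one against $\widetilde{C}_\omega(K,\cdot)$. The only cosmetic difference is that the paper applies the mean value theorem to the difference quotient and then passes to the limit, whereas you state the pointwise derivative directly by the chain rule; the two are equivalent.
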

\begin{proof}
	Let $u\in \sphere$ be such that $\overline{\omega}(t,u)$ is defined for all $t\in [0,\infty)$. Such $u$ is almost everywhere on $\sphere$. 
	Moreover, for sufficiently small $|t|$, since $\rho_{K_t}>0$, by the mean value theorem and \eqref{eq 9.5.3}, there exists $\theta$ between $\rho_{K_t}(u)$ and $\rho_{K}(u)$ such that 
	\begin{equation}
	\label{eq 9.5.7}
		\frac{1}{t}\int_{\rho_K(u)}^{\rho_{K_t}(u)} \omega(ru)r^{n-1}dr= \frac{\overline{\omega}(\rho_{K_t},u)-\overline{\omega}(\rho_{K},u)}{t}=\omega(\theta u)\theta^{n-1} \frac{\rho_{K_t}(u)-\rho_K(u)}{t}. 
	\end{equation} This, when combined with \eqref{eq 9.5.4} and that $\theta u$ is in a compact subset of $\mathbb{R}^n\setminus \{o\}$ (implied by $K\in \mathcal{K}^n_o$ and that $|t|$ is sufficiently small), implies that there exists $M>0$ independent of $t$ and $u$ such that 
	\begin{equation*}
		\left|\int_{\rho_K(u)}^{\rho_{K_t}(u)} \omega(ru)r^{n-1}dr\right|<M|t|.
	\end{equation*}
	
	By the definition of dual Orlicz volume, polar coordinates, the bounded convergence theorem, \eqref{eq 9.5.7}, \eqref{eq 9.5.5}, the fact that $\theta$ is between $\rho_{K_t}(u)$ and $\rho_K(u)$, the continuity of $\omega$ on $\mathbb{R}^n\setminus \{o\}$, and the definition of $\widetilde{C}_{\omega}(K,\cdot)$, we have
	\begin{equation}
		\begin{aligned}
			\lim_{t\rightarrow 0} \frac{\widetilde{V}_{\omega}(K_t)-\widetilde{V}_{\omega}(K)}{t}&= \lim_{t\rightarrow 0} \frac{1}{t} \left(\int_{K_t}\omega(x)dx-\int_{K}\omega(x)dx\right)\\
			&= \int_{\sphere} \left(\lim_{t\rightarrow 0}\frac{1}{t}\int_{\rho_{K}(u)}^{\rho_{K_t}(u)} \omega(ru)r^{n-1}dr\right)du\\
			&= \int_{\sphere} \left(\lim_{t\rightarrow 0} \omega(\theta u)\theta^{n-1}\frac{\rho_{K_t}(u)-\rho_K(u)}{t}\right)du\\
			& = \int_{\sphere} \omega(\rho_K(u)u)\rho_K^{n}(u)\frac{g\left(\nu_K(u\rho_K(u))\right)}{h_K(\nu_K(u\rho_K(u)))}du\\
			& = \int_{\sphere} \frac{g(v)}{h_K(v)} d\widetilde{C}_{\omega}(K,v).
		\end{aligned}
	\end{equation}
\end{proof}

When $g=h_L$ is the support function of a compact convex set $L$, we will write the right-hand side of \eqref{Eq.V1,phi(K,L)} as $\widetilde{V}_{1,\omega}(K,L)$. That is
\begin{equation}
\label{eq 9.12.1}
	\widetilde{V}_{1,\omega}(K,L)= \int_{\sphere}\frac{h_L(v)}{h_K(v)}d\widetilde{C}_\omega(K,v).
\end{equation} 

 We now discuss basics regarding log-concave, or, equivalently, convex functions. The primary object we are studying is a log-concave function $f=e^{-\phi}: \R^n\rightarrow [0,\infty)$ that is upper semi-continuous and $L^1$ integrable. Here $\phi: \R^n\rightarrow \mathbb{R}\cup\{+\infty\}$ is a lower semi-continuous convex function.
 
 For simplicity, we will write $\LC$ for the collection of upper semi-continuous log-concave functions with nonzero finite $L^1$ norm.  It is well-known that a log-concave function $f=e^{-\phi}$ is integrable if and only if
 \begin{equation}
 \label{In.CF13}
 	\liminf_{|x|\rightarrow \infty} \frac{\phi(x)}{|x|}>0.
 \end{equation}
 See, for example, \cite[Lemma 2.5]{CF13}. 
 
 We will write $\LO$ for the subclass of $\LC$ containing only those functions that achieve maxima at the origin. Such log-concave functions are sometimes called \emph{geometric}. See, for example,  \cite{AM11}.
 
 Playing the role of polar body operation in the functional setting is the Legendre transform. See, \cite{AM09, BS2008}. For a function $\phi:\R^n\to\R\cup \{+\infty\}$ (not necessarily convex), its  {Legendre transform}, denoted by $\phi^*$, defines a lower semi-continuous convex function: for $y\in\mathbb{R}^n$,
\begin{equation}\label{def-dual}
\phi^*(y)=\sup_{x\in\mathbb{R}^n}\left\{\langle x,y\rangle-\phi(x)\right\}.
\end{equation}
Directly following from the definition is the fact that  \begin{align} 
\label{equ-dual-1} \phi(x)+ \phi^*(y)\geq \langle x,y\rangle \ \ \ \mathrm{for\ all\ }x, y\in\mathbb{R}^n. 
\end{align}
A simple consequence of this fact is that if $\phi\not\equiv +\infty$, then $\phi^*>-\infty$. By direct computation, if $a\in \mathbb{R}$, then $(\phi+a)^* = \phi^*-a$. Moreover, for $c>0$, we have 
\begin{equation}
\label{Eq.(avarphi)*}
	(c\phi)^* = c \phi^*(\cdot/c).
\end{equation}

For a subset $E\subset \mathbb{R}^n$, we write $\mathbf{1}_E:\mathbb{R}^n\rightarrow \mathbb{R}$ for its indicator function. That is, $\mathbf{1}_E(x)=1$ if $x\in E$ and $\mathbf{1}_E(x)=0$ if $x\not\in E$. The characteristic function of $E$ is denoted by $\chi_{E}$, namely, $\chi_E(x)=0$ if $x\in E$ and $\chi_E(x)=+\infty$ if $x\notin E$. If $K\in \mathcal{K}^n$, then 
\begin{align}
    (\chi_K)^*=h_K. \label{relation-supp-dual}  
\end{align}

We recall the following facts regarding Legendre transform:
\begin{enumerate}[(a)]
	\item the Legendre transform reverses order: if $\phi\geq \psi$, then $\phi^*\leq \psi^*$;
	\item for any function $\phi:\mathbb{R}^n\rightarrow \mathbb{R}\cup\{ +\infty\}$, we have $(\phi^*)^*=\phi^{**}\leq\phi$, with equality if and only if $\phi$ is a lower semi-continuous convex function.
\end{enumerate}

What replaces the Minkowski combination between convex bodies in the functional setting is the Asplund sum and the following scalar multiplication on the set of log-concave functions. Let $f=e^{-\phi}, g=e^{-\psi}$ be log-concave functions and $t>0$. Define 
\begin{equation}
	(f\oplus g) = e^{-(\phi^*+\psi^*)^*} \ \ \ \mathrm{and} \ \ 
	t\cdot f = e^{-(t\phi^*)^*}.
\end{equation}
Equivalently, the above definitions can be written as supremum convolution.  For log-concave functions $f,g$ and $t,s>0$, we have
\begin{equation}
	(t\cdot f\oplus s\cdot g)(z)=\sup_{tx+sy=z} f(x)^tg(y)^s.
\end{equation}
In the special case where both $f$ and $g$ are indicator functions of convex bodies, the combination $t\cdot f\oplus s\cdot g$ recovers the classical Minkowski combination between convex bodies. 

Let $\phi: \mathbb{R}^n\rightarrow \mathbb{R}^n \cap \{+\infty\}$ be a convex function. We use $\dom (\phi)$ for the effective domain of $\phi$. That is, $\dom (\phi)=\{x\in\mathbb{R}^n\,:\, \phi(x)<+\infty\}$. It is well-known that $\phi$ is almost everywhere differentiable in $\text{int}\, \dom(\phi)$ with respect to Lebesgue measure. Moreover, if $\nabla \phi(x)$ exists, then
\begin{equation}
	\label{basicequ}
	\phi(x)+\phi^*(\nabla\phi(x)) = \langle x, \nabla \phi(x)\rangle.
\end{equation}

We will write $K_f$ for the closure of the support of a function $f$. That is, $K_f = \overline{\{x\in \R^n: f(x)\neq 0\}}$. If $f$ is a log-concave function, then $K_f$ is convex and consequently $\nu_{K_f}$ is defined almost everywhere on $\partial K_f$ with respect to $\mathcal{H}^{n-1}$. Moreover, $f$ is almost everywhere differentiable on $\R^n$ with respect to the Lebesgue measure.

If $f:\R^n\rightarrow \mathbb{R}$ is an integrable function, its total variation is given by 
\begin{align}\label{TL(f)}
\TV(f)=\sup\left\{\int_{\mathbb{R}^n}f(x) \operatorname{div} \Phi(x) dx: \Phi\in C_c^1( \mathbb{R}^n, \R^n) \ \ \mathrm{and}\ \  |\Phi(x)|\le 1\ \ \mathrm{for\ any\ }x\in \R^n\right\}.
\end{align}
Here, $C_c^1( \mathbb{R}^n, \R^n)$ denotes the set of all compactly supported $C^1$ maps from $\R^n$ to $\R^n$.
 When $\TV(f)<\infty$, we say that $f$ is of bounded variation.  A well-known result for functions of bounded variation is the existence of a vector Radon measure $Df$ on $\mathbb{R}^n$ \cite{AFP,EG2015}, such that,
\begin{align}\label{RadomMeasure}
\int_{\mathbb{R}^n}f(x) \operatorname{div} \Phi(x) \,dx=-\int_{\mathbb{R}^n}\langle\Phi(x), \,d\,Df(x)\rangle.
\end{align} 
Moreover, one has $\,d\,Df=\sigma_f \,d\|Df\|$ for some finite Radon measure $\|Df\|$ on $\R^n$ and some  $\|Df\|$-measurable $n$-dimensional vector valued function  $\sigma_f: \R^n\rightarrow \R^n$  satisfying that  $|\sigma_f(x) | = 1$ for $\|Df\|$-almost all $x\in \R^n$. The measure $\|Df\|$ is called the variation measure of $f$.

Let $L\in \mathcal{K}_o^n$ and write the anisotropic variational measure of $f$ with respect to $L$ by 
\begin{equation*}
	d\|-Df\|_{L^*}=h_L(-\sigma_f)d\|Df\|.
\end{equation*}
Let $\omega\in \mathscr{G}$.
We say an integrable function $f$ is of bounded anisotropic weighted variation with respect to $(L, \omega)$ if $f$ is of bounded variation and $\omega$ is integrable with respect to $\|-Df\|_{L^*}$. In this case, we write
\begin{equation}
\label{eq 81620}
	\TV_{L,\omega}(f)=\int_{\R^n} \omega d\|-Df\|_{L^*}
\end{equation}  
From the proof of  \cite[Theorem 3.2]{Rot20}, one sees that, if $f\in \LC$, then $f$ is of bounded variation and
\begin{align}
\label{eq yz1}
Df
=\nabla f dx-
\nu_{K_f} f d\mathcal{H}^{n-1}(x)\Big|_{\partial K_f}.
\end{align} 
It will be shown in Proposition \ref{Le.finite} that if $f=e^{-\phi}\in \LC$, $o\in \text{int}\, \dom(\phi)$, and $\omega\in \mathscr{G}$, then $\TV_{L,\omega}(f)<\infty$. In particular, 
\begin{equation}
	d\|-Df\|_{L^*} = h_L(\nabla \phi) f dx+h_L(\nu_{K_f})fd\mathcal{H}^{n-1}\Big|_{\partial K_f}.
\end{equation}

Let $E\subset \R^n$. We say $E$ is of finite perimeter if $\mathbf{1}_E$ is of bounded variation and write
\begin{equation}
	\operatorname{Per} (\partial E) = \operatorname{TV}(\mathbf{1}_E).
\end{equation}
Similarly, whenever it exists, we will write
\begin{equation}
	\operatorname{Per}_{L,\omega} (\partial E) = \operatorname{TV}_{L,\omega}(\mathbf{1}_E).
\end{equation}

For $s\in \mathbb{R}$, we will use $E_s(f)$ for the superlevel set of $f$. That is
\begin{equation}
	E_s(f)=\left\{x\in\mathbb{R}^n: f(x)\ge s \right\}.
\end{equation}
When the context is clear, we will occasionally write $E_s$.

The following is a special case of the coarea formula established in  \cite[(2.22)]{FMP10}: if $L\in \mathcal{K}^n_o$,  $f=e^{-\phi}\in \LC$ with $o\in \text{int}\, \dom(\phi)$, and  $\omega\in \mathscr{G}$, then
\begin{align}\label{coarea-formula}
\TV_{L,\omega}(f)
=\int_{0}^{\infty}\operatorname{Per}_{L,\omega}(\partial E_s)ds.
\end{align}

\section{The Orlicz moment of log-concave functions}\label{Sec.variaition}

Recall that $\LC$ is the collection of upper semi-continuous log-concave functions on $\R^n$ with non-zero finite $L^1$ norm. Let $\omega\in \mathscr{G}$. We define the $\omega$-Orlicz moment of $f$ as in \eqref{or-m-log}:   
\begin{align*}
\widetilde{V}_\omega(f)=\int_{\mathbb{R}^n}f(x)\omega(x)dx.
\end{align*}
As will be seen in Lemma \ref{Le.finiteness}, as long as $\omega$ satisfies (A1) and (A3), the moment $\widetilde{V}_{\omega}(f)$ will always be finite. When $f=\mathbf{1}_K$ for $K\in \mathcal{K}_o^n$, it is simple to see that $\widetilde{V}_{\omega}(f)$ recovers the dual Orlicz volume $\widetilde{V}_{\omega}(K)$ of $K$. Equally simple to see is the fact that for every $q>0$, the function $\omega(x)=|x|^{q-n}\in \mathscr{G}$. Therefore, the $\omega$-Orlicz moment recovers the classical $q$-th moment of $f$ in probability theory. 

Immediate from the definition of $\widetilde{V}_{\omega}(\cdot)$ is its monotonicity: if $f\leq g$, then $\widetilde{V}_{\omega}(f)\leq \widetilde{V}_{\omega}(g)$. 

\begin{lemma}\label{Le.finiteness}  
For any $f=e^{-\phi}\in\LC$ and $\omega\in \mathscr{G}$, one has $0<\VO(f)<\infty$.  
\end{lemma}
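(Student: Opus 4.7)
\medskip
\noindent\textbf{Proof plan.} The lemma has two assertions, and I will handle them separately. For the positivity $\widetilde{V}_{\omega}(f)>0$, the idea is simply that both factors in the integrand are nonnegative and each is strictly positive on a non-negligible set. Concretely, since $f\in\LC$ has nonzero $L^{1}$ norm, the superlevel set $\{f>0\}$ has positive Lebesgue measure; since $\omega>0$ on $\R^{n}\setminus\{o\}$ and $\{o\}$ is a null set, the product $f\omega$ is strictly positive on a set of positive Lebesgue measure, whence the integral is strictly positive.

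The main content is the upper bound $\widetilde{V}_{\omega}(f)<\infty$. The plan is to split $\R^{n}$ into a bounded piece and a tail and control each using a different hypothesis on $\omega$. First I would use the classical integrability criterion \eqref{In.CF13}: since $f=e^{-\phi}\in\LC$, there exist constants $c>0$ and $R_{1}>0$ such that
\begin{equation*}
\phi(x)\ \ge\ c\,|x| \qquad \text{for all } |x|\ge R_{1},
\end{equation*}
after possibly absorbing an additive constant. Then I would invoke (A3): for the choice $\varepsilon=c/2$ there exists $R_{2}\ge R_{1}$ with $\ln\omega(x)\le \varepsilon|x|$, i.e. $\omega(x)\le e^{c|x|/2}$, whenever $|x|\ge R_{2}$. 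Combining these two estimates gives
\begin{equation*}
f(x)\omega(x)\ \le\ e^{-c|x|/2} \qquad \text{for all } |x|\ge R_{2},
\end{equation*}
which is integrable on $\{|x|\ge R_{2}\}$ by polar coordinates.

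For the bounded piece $\{|x|\le R_{2}\}$, the plan is to bound $f$ by a constant and apply (A1). Here I would observe that $\phi$ is a lower semi-continuous convex function which, by the estimate $\phi\ge c|x|$ at infinity, is bounded below outside a ball; on the remaining compact ball it attains its infimum by lower semi-continuity. Hence $\phi$ is bounded below on all of $\R^{n}$, so $f=e^{-\phi}\le M$ globally for some finite constant $M$. Consequently
\begin{equation*}
\int_{|x|\le R_{2}} f(x)\omega(x)\,dx\ \le\ M\int_{|x|\le R_{2}}\omega(x)\,dx,
\end{equation*}
and the right-hand side is finite by (A1), since $\omega$ is continuous on the compact annulus $\{1\le|x|\le R_{2}\}$ and locally integrable on the unit ball. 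Adding the two pieces proves $\widetilde{V}_{\omega}(f)<\infty$.

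The only mildly delicate step is arguing that $\phi$ is uniformly bounded below so that $f$ is uniformly bounded above; everything else is a routine decomposition. I do not expect (A2) to play any role here, which is consistent with the remark in the paper that only (A1) and (A3) are needed for the finiteness of $\widetilde{V}_{\omega}$.
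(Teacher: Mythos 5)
Your proof is correct and follows essentially the same strategy as the paper: split $\mathbb{R}^n$ into a tail (where (A3) and the linear growth of $\phi$ from \eqref{In.CF13} give an integrable exponential dominating $f\omega$) and a bounded region (where $f$ is bounded and (A1) gives integrability of $\omega$). The only cosmetic difference is that you derive the global boundedness of $f$ from lower semi-continuity of $\phi$ on a compact ball, while the paper invokes upper semi-continuity of $f$ directly — these are two phrasings of the same Weierstrass-type step.
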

\begin{proof}
That $\widetilde{V}_{\omega}(f)>0$ is immediate: if it was $0$, then $f(x)\omega(x)$ would be 0 almost everywhere, which, when combined with the fact that $\omega>0$, would imply that $f$ is almost everywhere 0. This would be a contradiction to the fact that $f$ has a non-zero $L^1$ norm.

We now show that $\widetilde{V}_{\omega}(f)<\infty$. Intuitively, the finiteness of $\widetilde{V}_\omega(f)$ comes from: 1. upper semi-continuity of $f$ and Condition (A1), which guarantees integrability around the origin; 2. the integrability of $f$ (hence the fast decay of $\phi$) and Condition (A3), which guarantees integrability near $\infty$. 
	
	Since $f\in \LC$, it follows from  \eqref{In.CF13} that $\phi(x)\geq c|x|+d$ for some constants $d\in \R$ and $c>0$. By Condition (A3), there exists a sufficiently large $M>0$ such that for all $x\in (B_2^n(M))^c$, we have
\begin{align}\label{o-v-estimamte-1}
\ln \omega(x)<\frac{c}{2}\cdot |x|+\frac{d}{2}\leq \frac{\phi(x)}{2}.\end{align}
Therefore,
\begin{equation}
\label{eq 8111}
	\begin{aligned}
		\int_{(\ball(M))^c} e^{-\phi}\omega\,dx   =\int_{(\ball( M))^c} e^{\ln \omega-\phi}\,dx  \leq  \int_{\R^n}  e^{\frac{-\phi}{2}}\,dx \leq \int_{\R^n}e^{-\frac{c|x|}{2}-\frac{d}{2}}dx<\infty.
	\end{aligned}
\end{equation}

Since $f$ is upper semi-continuous, there exists $C>0$ such that on $\ball(M)$, we have $0\leq f\leq C.$
Therefore, by Condition (A1), we have
\begin{equation}
\label{bound-at-o}
\begin{aligned}
\int_{\ball(M)}f\omega\,dx  \leq C \int_{B_2^n(M)} {\omega}(x)\,dx <\infty.
\end{aligned} 
\end{equation}
The desired result follows immediately from \eqref{eq 8111} and \eqref{bound-at-o}.
\end{proof}

We point out that many usual functions are in $\mathscr{G}$, such as, $|x|^{q-n}$ for $q>0$, all bounded continuous functions on $\R^n$, and $e^{|x|^{\alpha}}$ for some constant $0<\alpha<1$. Generally, if a continuous function $\omega: \R^n\setminus\{o\} \rightarrow (0, \infty)$ is bounded from above by $|x|^{q-n}$ for $q>0$ around the origin and bounded from above by $\left(e^{|x|^{\alpha}}\right)^{\beta}$ for some constants $\alpha<1$, $\beta\in \mathbb{R}$ around $\infty$, then $\omega\in \mathscr{G}$ as well. 

We have the following result. In particular, \eqref{In.finite} extends \cite[Proposition 7]{CK15}, and \eqref{finite-1-moment} extends those in  \cite[Lemma 4]{CK15} and \cite[Theorem 5.12]{HLXZ}.  
\begin{proposition}\label{Le.finite}
For $f=e^{-\phi}\in \LC$ with $o\in \mathrm{int}(\dom(\phi))$ and $\omega\in \mathscr{G}$, one has \begin{align}\label{In.finite} -\infty<
\int_{\mathbb{R}^n}\phi^\ast(\nabla\phi(x))f(x)\omega(x)dx<\infty, \\ 0\leq \int_{\mathbb{R}^n} |\nabla\phi(x)|f(x)\omega(x)dx<\infty \label{finite-1-moment}.
\end{align}
\end{proposition}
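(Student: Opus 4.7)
The plan is to proceed in three steps, with a bootstrap to close the main obstacle.

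First, the lower bound in \eqref{In.finite} follows immediately from Young's inequality at $y=o$: $\phi^{*}(\nabla\phi(x)) \geq \langle o, \nabla\phi(x)\rangle - \phi(o) = -\phi(o)$. Since $o \in \mathrm{int}(\dom(\phi))$ makes $\phi(o) < \infty$, and $\VO(f) < \infty$ by Lemma \ref{Le.finiteness}, integration yields $\int\phi^{*}(\nabla\phi) f\omega \geq -\phi(o)\VO(f) > -\infty$. Nonnegativity in \eqref{finite-1-moment} is automatic.

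Second, a convexity argument ties the two finiteness statements together. Pick $r>0$ with $\overline{\ball(r)} \subset \mathrm{int}(\dom(\phi))$ and set $M = \max_{\overline{\ball(r)}} \phi < \infty$. For every $x$ at which $\nabla\phi$ exists and every $v \in \sphere$, the subgradient inequality $\phi(rv) \geq \phi(x) + \langle \nabla\phi(x), rv-x\rangle$, together with $\phi(rv) \leq M$, taking the supremum over $v$, and the identity \eqref{basicequ}, yields
\[
r|\nabla\phi(x)| \;\leq\; M - \phi(x) + \langle\nabla\phi(x), x\rangle \;=\; M + \phi^{*}(\nabla\phi(x)).
\]
Integrating against $f\omega$ gives $r\int|\nabla\phi| f\omega \leq M\VO(f) + \int\phi^{*}(\nabla\phi)f\omega$, so the upper-bound halves of \eqref{In.finite} and \eqref{finite-1-moment} are equivalent.

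Third, to establish these upper bounds I appeal to the BV-theoretic machinery of the preliminaries. By \eqref{eq yz1} with $L = \ball$ (so $h_L \equiv 1$ on $\sphere$) and the coarea formula \eqref{coarea-formula},
\[
\int|\nabla\phi| f\omega\,dx \;\leq\; \TV_{\ball,\omega}(f) \;=\; \int_0^{\infty}\!\!\int_{\partial^{*}E_s}\omega\,d\mathcal{H}^{n-1}\,ds,
\]
where $E_s=\{f\geq s\}$ is convex. I split at $s = f(o)/2$. For $s \leq f(o)/2$, continuity of $f$ at $o$ provides a uniform inradius $r_0 > 0$ of $E_s$; using $\langle x, \nu_{E_s}\rangle \geq r_0$ on $\partial E_s$ together with a conversion to polar coordinates reduces the contribution to a multiple of $r_0^{-1}\int_{\{f \leq f(o)/2\}}\omega f\langle\nabla\phi, x\rangle\,dx$. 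For $s > f(o)/2$, the level sets $E_s$ are confined to the compact set $E_{f(o)/2}$, where a polar-coordinate estimate controlled by conditions (A1)--(A2) on $\omega$ gives an integrable-in-$s$ bound on $\int_{\partial E_s}\omega\,d\mathcal{H}^{n-1}$.

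The chief obstacle is the inner integral $\int\omega f\langle\nabla\phi, x\rangle\,dx$, which resembles the quantity being bounded. To close the argument I plan to use $|\langle\nabla\phi, x\rangle| \leq |x||\nabla\phi|$ together with the pointwise bound $|x|e^{-\phi(x)/2} \leq C$ (which follows from the linear growth \eqref{In.CF13} of $\phi$ at infinity combined with local boundedness of $\phi$ near $o$) to dominate it by a multiple of $\int|\nabla\phi| e^{-\phi/2}\omega\,dx$, which is the same type of integral applied to $e^{-\phi/2}\in\LC$. Iterating the coarea estimate across the half-powers $e^{-\phi/2^k}\in\LC$, one tracks the product of constants carefully to ensure closure after finitely many steps. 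The upper bound in \eqref{In.finite} then follows from $\phi^{*}(\nabla\phi) = \langle x,\nabla\phi\rangle - \phi(x)$ combined with the just-established finiteness of $\int|x||\nabla\phi| f\omega$ and the elementary estimate $|\phi|e^{-\phi} \leq C e^{-\phi/2}$ (valid because $f\in L^{\infty}$ forces $\phi \geq -A$ for some $A \geq 0$), which yields $\int|\phi| f\omega \leq C\VO(e^{-\phi/2}) < \infty$ by Lemma \ref{Le.finiteness}.
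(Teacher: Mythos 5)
Your first and second steps are correct and the convexity observation in step two is a genuinely nice reduction that the paper does not exploit (the paper proves the two upper bounds separately). The inequality $r\,|\nabla\phi(x)| \le M + \phi^{*}(\nabla\phi(x))$ you derive from the subgradient inequality and \eqref{basicequ} is valid, so finiteness of $\int\phi^{*}(\nabla\phi)f\omega$ does imply finiteness of $\int|\nabla\phi|f\omega$. The problem is step three, which does not actually establish either finiteness.

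The bootstrap cannot close. After applying $|\langle\nabla\phi,x\rangle|\le|x|\,|\nabla\phi|$ and the pointwise bound $|x|e^{-\phi/2}\le C$, you arrive at a bound of the form $\int|\nabla\phi|e^{-\phi}\omega \le A_0 + C_0\int|\nabla\phi|e^{-\phi/2}\omega$, and you propose to iterate on $e^{-\phi/2^k}$. But the quantities $I_k:=\int|\nabla\phi|e^{-\phi/2^{k}}\omega\,dx$ are nondecreasing in $k$ (after the harmless normalization $\phi\ge 0$), and $I_k\uparrow\int|\nabla\phi|\,\omega\,dx$, which is typically $+\infty$. You are bounding a smaller quantity by a larger one; no finite number of iterations terminates, and an infinite iteration yields $I_0\le\sum_j A_j\prod_{i<j}C_i + (\prod_i C_i)\,I_\infty$, where $I_\infty$ is uncontrolled. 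Moreover the constants $C_k$ (coming from $|x|e^{-\phi/2^{k+1}}\le C_k$) do not decrease — they \emph{increase}, since $e^{-\phi/2^{k+1}}\to 1$. So the iteration diverges rather than ``closes after finitely many steps.''

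There is a second, structural problem. The proposition assumes only $o\in\mathrm{int}(\dom(\phi))$; it does \emph{not} assume $f$ attains its maximum at the origin, nor any $C^{1,\alpha}$ regularity near $o$. Your level-set analysis implicitly treats $o$ as the maximizer (splitting at $s=f(o)/2$ and using an inradius of $E_s$ around $o$), and the ``integrable-in-$s$'' claim for $s>f(o)/2$ needs the level sets to shrink at a controlled rate — exactly what condition \eqref{condition-sup<infty} supplies in Lemma \ref{Le.DCT1}, but which is absent from Proposition \ref{Le.finite}. For general $f$ with $o\in\mathrm{int}(\dom(\phi))$ but maximizer elsewhere, the sets $E_s$ with $s>f(o)$ do not contain $o$ at all, and the polar-coordinate bookkeeping breaks. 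In the paper the hard work is farmed out to \cite[Proposition~7]{CK15} and \cite[Lemma~4]{CK15}, which handle the $\omega\equiv 1$ case in full generality; the weight $\omega$ is then removed by splitting at a fixed ball $\ball(\varepsilon_0)$, using Lipschitz estimates for $\phi$ inside and the growth bound $\ln\omega\le\phi/2+d_1$ (from condition (A3)) outside, and applying the CK15 results to $e^{-\phi/2}$ via the scaling identity \eqref{Eq.(avarphi)*}. If you want to avoid citing CK15 and instead run a BV argument, you would essentially have to reprove their Proposition~7 from scratch — which is a real piece of work and not something a bootstrap of this shape delivers.
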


 \begin{proof}  The lower bound in \eqref{In.finite}  is an easy consequence of Lemma \ref{Le.finiteness} and   $\phi^*(y)\geq -\phi(o)>-|\phi(o)|>-\infty$  for $y\in\R^n$.
 
Now let us prove the upper bound in \eqref{In.finite}. 

Since $o\in \mathrm{int}(\dom(\phi))$, there exists $\varepsilon_0>0$ such that $\ball( \varepsilon_0)\subseteq \mathrm{int}(\dom(\phi))$. Since $\phi$ is convex, it is  Lipschitz on $\ball(\varepsilon_0)$. Thus, one can find $L>0$ such that $|\nabla\phi|\le L$  almost everywhere on $\ball(\varepsilon_0)$. Moreover, by Lipschitz continuity of $\phi$, we conclude the existence of $c_0\in \mathbb{R}$ such that $\phi\geq c_0$ on $\ball(\varepsilon_0)$. It follows from \eqref{basicequ}, Cauchy-Schwarz inequality, and Lemma \ref{Le.finiteness} that 
\begin{equation}
\label{bounded-above--1a}
	\begin{aligned}  \int_{\ball(\varepsilon_0)}\phi^{\ast}(\nabla\phi(x))e^{-\phi(x)} \omega(x)dx &=\int_{\ball(\varepsilon_0)}\left(\langle x,\nabla\phi\rangle-\phi(x)\right)e^{-\phi(x)}\omega(x) dx\\
&\le(\varepsilon_0 L-c_0)\int_{\ball(\varepsilon_0)} e^{-\phi(x)}\omega(x) dx\\ 
&\le(\varepsilon_0 L+|c_0|)\int_{\ball(\varepsilon_0)} e^{-\phi(x)}\omega(x) dx <\infty. 
\end{aligned} 
\end{equation} 

On the other hand, by \eqref{o-v-estimamte-1}, the continuity of $\omega$, and the lower semi-continuity of $\phi$, one can find a sufficiently large constant $d_1>0$  such that   $\ln \omega(x)\leq \frac{\phi(x)}{2}+d_1$ for all $x\in (\ball(\varepsilon_0))^c$. Due to \eqref{equ-dual-1} and $\phi(o)<\infty$, one has $\phi^*(\nabla\phi(x))\geq -\phi(o)$  for almost all $x\in (\ball( \varepsilon_0))^c$, and hence  
\begin{equation}
\label{upp-b-11}
 \begin{aligned} \int_{(\ball(\varepsilon_0))^c}\!\! \Big(\phi^{\ast}(\nabla\phi(x)) +\phi(o)\Big)e^{-\phi(x)}\omega(x)\,dx 
& \le  \int_{(\ball(\varepsilon_0))^c}\!\!\Big(\phi^{\ast}(\nabla\phi(x)) +\phi(o)\Big)e^{\frac{-\phi(x)}{2}+d_1} dx\\ 
& \le e^{d_1}  \int_{(\ball(\varepsilon_0))^c}\Big(\phi^{\ast}(\nabla\phi(x)) +\phi(o)\Big)e^{-\frac{\phi(x)}{2}} dx\\ 
& \le e^{d_1}  \int_{\R^n}\Big(\phi^{\ast}(\nabla\phi(x)) +\phi(o)\Big)e^{-\frac{\phi(x)}{2}} \,dx\\
& :=e^{d_1}(I_1+\phi(o)I_2), 
\end{aligned}
\end{equation}
 where $I_1$ and $I_2$ are \begin{align} I_1=  \int_{\R^n} \phi^{\ast}(\nabla\phi(x))  e^{-\frac{\phi(x)}{2}} \,dx \ \ \mathrm{and}\ \   I_2= \int_{\R^n}  e^{-\frac{\phi(x)}{2}} \,dx. \end{align}  Clearly, $I_2$ is finite  as implied by \eqref{In.CF13}.   Proposition 7 in \cite{CK15} implies that if $e^{-\phi}\in  \LC$, then  \begin{align}\label{ck-finiteness} -\infty< \int_{\R^n} \phi^{\ast}(\nabla\phi(x)) e^{-\phi(x)} \,dx<\infty.\end{align} Applying this to the function $e^{-\frac{\phi}{2}}\in \LC$ (using the fact that  $I_2$ is finite), and by   \eqref{Eq.(avarphi)*}, one gets \begin{align}
I_1 =\int_{\R^n} \phi^{\ast}(\nabla\phi(x)) e^{-\frac{\phi(x)}{2}} \,dx   = 2\int_{\R^n} \Big(\frac{\phi}{2}\Big)^{\ast}\Big(\frac{\nabla\phi(x)}{2}\Big) e^{-\frac{\phi(x)}{2}} \,dx <\infty. \nonumber \end{align} 
This, together with \eqref{upp-b-11} and Lemma \ref{Le.finiteness},  gives 
\begin{equation}\label{upper-b-2-2-2}
   \begin{aligned} \int_{(\ball(\varepsilon_0))^c}\!\!  \phi^{\ast}(\nabla\phi(x))  e^{-\phi(x)}\omega(x)\,dx 
& \le  e^{d_1} (I_1+\phi(o)I_2)-  \phi(o)\int_{(\ball( \varepsilon_0))^c}  e^{-\phi(x)}\omega(x)\,dx
\\ &\leq 
e^{d_1} (I_1+|\phi(o)|I_2)+|  \phi(o) |  \int_{(\ball(\varepsilon_0))^c} e^{-\phi(x)}\omega(x)\,dx
\\ &\leq  e^{d_1} (I_1+|\phi(o)|I_2)+|  \phi(o) | \cdot \VO(e^{-\phi})<\infty. 
\end{aligned}
\end{equation}
The desired upper bound for \eqref{In.finite} can be obtained by combining \eqref{bounded-above--1a} and \eqref{upper-b-2-2-2}.  

Now let us prove inequality \eqref{finite-1-moment}. The lower bound of \eqref{finite-1-moment} is trivial, and the proof for the upper bound is similar to the one for \eqref{In.finite}. Indeed, recall that one has $|\nabla\phi|\le L$  almost everywhere on $\ball( \varepsilon_0)$. This, together with Lemma \ref{Le.finiteness},  implies  
\begin{align}  \int_{\ball(\varepsilon_0)} |\nabla\phi(x)| e^{-\phi(x)}\omega(x) dx 
&\le L \int_{\ball(\varepsilon_0)} e^{-\phi(x)}\omega(x) dx <\infty. \label{bounded-above--1-moment}  \end{align}  
Recall that   $\ln \omega(x)\leq \frac{\phi(x)}{2}+d_1$ for all $x\in (\ball( \varepsilon_0))^c$. It has been proved in \cite[Lemma 4]{CK15} that, if $e^{-\phi}\in  \LC$, then  \begin{align}\label{ck-finiteness-1-moment}  \int_{\R^n} |\nabla e^{-\phi(x)}| \,dx= \int_{\R^n} |\nabla\phi(x)| e^{-\phi(x)} \,dx<\infty.\end{align} Applying this to the function $e^{-\frac{\phi}{2}}\in \LC$, one gets 
\begin{align}  \int_{(\ball(\varepsilon_0))^c} |\nabla\phi(x)|e^{-\phi(x)}\omega(x)dx  \leq 2e^{d_1}  \int_{(\ball(\varepsilon_0))^c} \frac{|\nabla\phi(x)|}{2} e^{-\frac{\phi(x)}{2}} \,dx   <\infty. \nonumber \end{align} 
This, together with \eqref{bounded-above--1-moment}, gives the upper bound for \eqref{finite-1-moment}.   
 \end{proof} 

\section{The dual Orlicz curvature measures for log-concave functions} \label{Section-4}
In this section, we will establish a variational  formula for the $\omega$-Orlicz moment of the log-concave function $f=e^{-\phi}\in \LO$, which will lead to two Borel measures related to $f$. 
 
\begin{definition}\label{Def.delta,f,g}
Let $\omega\in \mathscr{G}$ and $f, g$ be upper semi-continuous log-concave functions on $\R^n$. Define $\DO(f,g)$, the first variation of the $\omega$-Orlicz moment of $f$ along $g$, with respect to the Asplund sum, by 
\begin{align}
    \DO(f,g)=\lim_{t\to0^{+}}\frac{\VO(f\oplus t\cdot g)-\VO(f)}{t}, \label{def-VO}
\end{align}
if the limit exists.
\end{definition} 
When $\omega$ is the constant function $1$ on $\R^n$, $\DO(f,g)$ recovers $\delta J(f, g)$ in \cite{CF13}. When $\omega(x)=|x|^{q-n}$ for $q>0$, $\DO(f,g)$ reduces to $\delta_q(f, g)$ in \cite{HLXZ}. 

Definition \ref{Def.delta,f,g} is an extension of the geometric case. If $f=\mathbf{1}_K$ and $g=\mathbf{1}_L$ for $K, L\in \mathcal{K}_o^n$, then according to \eqref{Eq.V1,phi(K,L)},
\begin{align*}
\DO(\mathbf{1}_K, \mathbf{1}_L)&= \lim_{t\to0^{+}}\frac{\VO(\mathbf{1}_{K+tL})-\VO(\mathbf{1}_K)}{t}\\ &=\lim_{t\to0^{+}}\frac{\VO(K+tL)-\VO(K)}{t} =\widetilde{V}_{1,\omega}(K, L).
\end{align*} 
However, we shall see that for generic log-concave functions $f$ and $g$, establishing the existence of the limit in \eqref{def-VO} and computing the limit turn out to be quite different from their geometric counterparts. See, for example, \cite{CF13,   FXY20+, HLXZ, Rot20, Rot22}. 

Recall that $K_f=\overline{\{x\in \R^n: f(x)\neq 0\}}$ denotes the closure of the support of $f$. We first state our main theorem, the proof of which will be carried out throughout this section.

\begin{theorem}\label{Th.variation} Let $\omega\in \mathscr{G}$. Assume $f=e^{-\phi}\in \LO$ such that 
\begin{align}\label{condition-sup<infty}
\limsup_{|x|\to 0} \frac{|f(x)-f(o)|}{|x|^{\alpha+1}}<\infty,
\end{align} 
for some $0<\alpha<1.$ The following arguments hold. 
\begin{enumerate}[i)]
	\item \label{statement 1} If $g=e^{-\psi}=c \mathbf{1}_L$ for some $c>0$ and some compact convex set $L\subset \R^n$ with $o\in L$ (or, equivalently,  $\psi=-\ln c+\chi_L$), then  
\begin{align}\label{Main.Variation-indicator}
\DO(f, g)=\int_{\mathbb{R}^n} \psi^\ast(\nabla \phi(x)) e^{-\phi(x)}\omega(x)\,dx +
\int_{\partial K_f}h_{L}(\nu_{K_f}(x) )e^{-\phi(x)} \omega(x)\,d\mathcal{H}^{n-1}(x).
\end{align}  
	\item \label{statement 2} If  $g=e^{-\psi}$ is an upper semi-continuous, log-concave function with compact support and $g(o)>0$ (in this case, $K_g$ is a compact set such that $o\in K_g$), then
\begin{align}\label{Main.Variation-new}
\DO(f,g)=\int_{\mathbb{R}^n}\psi^\ast(\nabla \phi(x)) e^{-\phi(x)}\omega(x)\,dx +
\int_{\partial K_f}h_{K_g}(\nu_{K_f}(x) ) e^{-\phi(x)}\omega(x)\,d\mathcal{H}^{n-1}(x).
\end{align}
\end{enumerate}
\end{theorem}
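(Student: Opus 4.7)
The plan is to lift the variation of $\VO(f\oplus t\cdot g)$ to the superlevel sets $K_s:=E_s(f)$ of $f$. Since $f\in\LO$ attains its maximum at $o$, each $K_s$ with $0<s<f(o)$ is a convex body with $o\in\mathrm{int}(K_s)$, so Lemma \ref{Le.var.geo} applies on every level. The layer cake identity $\VO(f)=\int_0^{\infty}\VO(K_s)\,ds$ (with $K_s=\emptyset$ for $s>f(o)$) and its counterpart for $f\oplus t\cdot g$ reduce the computation of $\DO(f,g)$ to differentiating under the integral in $s$, then reassembling via the coarea formula \eqref{coarea-formula}.

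\textbf{Part (i).} For $g=c\mathbf{1}_L$ (so $\psi^*=\ln c+h_L$), I would first establish the level-set identity
\[
E_s(f\oplus t\cdot g)=K_{s/c^t}+tL,\qquad s,t>0,
\]
which follows by direct unwinding of the Asplund supremum. Substituting $u=s/c^t$ gives $\VO(f\oplus t\cdot g)=c^t\int_0^\infty\VO(K_u+tL)\,du$, and differentiating at $t=0^+$ yields the prefactor contribution $(\ln c)\VO(f)$ plus the inner derivative $\widetilde V_{1,\omega}(K_u,L)$ supplied by Lemma \ref{Le.var.geo}. Swapping the derivative with the outer integral is the main technical step: one needs a uniform-in-$u$ bound on $t^{-1}[\VO(K_u+tL)-\VO(K_u)]$. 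This is exactly where \eqref{condition-sup<infty} enters---it forces $\ball(c_0(f(o)-u)^{1/(\alpha+1)})\subseteq K_u$ for $u$ close to $f(o)$, so $h_{K_u}$ is bounded below and, together with (A2), $\widetilde V_{1,\omega}(K_u,L)\lesssim(f(o)-u)^{-1/(\alpha+1)}$, an integrable singularity precisely because $1/(\alpha+1)<1$. Once the derivative is taken, the Fenchel identity $\langle x,\nu_{K_u}(x)\rangle=h_{K_u}(\nu_{K_u}(x))$ a.e.\ on $\partial K_u$ collapses $\widetilde V_{1,\omega}(K_u,L)$ to $\int_{\partial K_u}h_L(\nu_{K_u})\omega\,d\mathcal H^{n-1}=\operatorname{Per}_{L,\omega}(\partial K_u)$ through \eqref{dual-curvature-convex body}, and the coarea formula \eqref{coarea-formula} together with the BV decomposition \eqref{eq yz1} gives
\[
\int_0^\infty \widetilde V_{1,\omega}(K_u,L)\,du=\TV_{L,\omega}(f)=\int_{\R^n}h_L(\nabla\phi)e^{-\phi}\omega\,dx+\int_{\partial K_f}h_L(\nu_{K_f})e^{-\phi}\omega\,d\mathcal H^{n-1}.
\]
Absorbing $\ln c$ via $\psi^*=\ln c+h_L$ delivers \eqref{Main.Variation-indicator}.

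\textbf{Part (ii).} For a general compactly supported $g$ with $g(o)>0$, I would combine a pointwise analysis on $\mathrm{int}(K_f)$ with a shell analysis near $\partial K_f$. Writing $h_t(z)=(\phi^*+t\psi^*)^*(z)$ and testing the supremum at $y_0=\nabla\phi(z)$ using the Fenchel equality gives $h_t(z)=\phi(z)-t\psi^*(\nabla\phi(z))+o(t)$ for a.e.\ $z\in\mathrm{int}(K_f)$, hence $t^{-1}[(f\oplus t\cdot g)(z)-f(z)]\to \psi^*(\nabla\phi(z))\,f(z)$. Dominated convergence, using convexity of $t\mapsto h_t(z)$ to produce a monotone bound on the difference quotient and Proposition \ref{Le.finite} to furnish an $L^1$ majorant, yields the first term of \eqref{Main.Variation-new}. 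The boundary term comes from the thin shell $(K_f+tK_g)\setminus K_f$, whose Lebesgue content is asymptotically $t\int_{\partial K_f}h_{K_g}(\nu_{K_f})\,d\mathcal H^{n-1}$ and on which $f\oplus t\cdot g$ converges to the inner boundary value of $f$; a tube/coarea argument on a neighborhood of $\partial K_f$ then produces the second term. A cleaner alternative is to invoke part (i) with monotone approximations $g_k=\sup_{j\le k}a_j\mathbf{1}_{L_j}\nearrow g$, apply the formula of part (i) to each $g_k$, and pass to the limit using continuity of the Legendre transform together with the monotone control given by \eqref{condition-sup<infty}.

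\textbf{Main obstacle.} The sharpest point is the uniform estimate needed to differentiate under the integral, both across the level parameter $u\in(0,f(o))$ in (i) and across $\R^n$ in (ii); the delicate region in each case is a neighborhood of the maximizer $o$, where the geometry of $K_u$ degenerates to a point as $u\uparrow f(o)$. Condition \eqref{condition-sup<infty} is tailor-made for this: it provides a quantitative H\"older lower bound on the inradius of $K_u$ whose exponent $1/(\alpha+1)<1$ is exactly what is needed to keep $\widetilde V_{1,\omega}(K_u,L)$ integrable in $u$ up to the top level. Without it, the pointwise envelope computation survives, but the dominating majorant (and hence the full variational formula) is lost---reflecting the translation-sensitivity of the Orlicz setting versus the translation-invariant case $\omega\equiv1$ treated in \cite{Rot22}.
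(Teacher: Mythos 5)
Your Part (i) follows essentially the same layer-cake plus coarea strategy as the paper (the level-set identity, the substitution $u=s/c^t$ producing the $(\ln c)\VO(f)$ term, and the reassembly $\int_0^{M_f}\widetilde V_{1,\omega}(E_u,L)\,du = \TV_{L,\omega}(f)$ via \eqref{coarea-formula} and \eqref{eq yz1}). But there is a genuine gap in the dominated-convergence step: you construct a majorant only for $u$ near $M_f=f(o)$, where \eqref{condition-sup<infty} keeps the inradius of $E_u$ from collapsing too fast. You never address the other end, $u\in(0,M_f-\delta_0)$, where $E_u$ can be arbitrarily large as $u\to 0^+$. There the denominator $h_{E_u}$ is harmless, but the numerator $\widetilde C_\omega(E_u+\theta L,S^{n-1})=\int_{S^{n-1}}\rho^n\omega(\rho u)\,du$ involves values of $\omega$ far from the origin, and without the growth condition (A3) it need not be integrable in $u$. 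The paper's Lemma 4.6 handles exactly this, combining \eqref{In.CF13} (which gives $\rho_{E_u}\lesssim-\ln u$) with (A3) (which controls $\ln\omega$ linearly) to produce an integrable majorant near $u=0$. You should also note that the coarea formula \eqref{coarea-formula} as stated requires $L\in\mathcal{K}_o^n$; the paper gets from ``compact convex containing $o$'' to $\mathcal{K}_o^n$ via the additivity $\DO(f,\mathbf{1}_{L+B_2^n})=\DO(f,\mathbf{1}_L)+\DO(f,\mathbf{1}_{B_2^n})$ and cancellation.

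For Part (ii), neither of your alternatives is airtight. The first (pointwise on $\mathrm{int}(K_f)$ plus a ``tube/coarea'' argument near $\partial K_f$) is not developed enough to judge: the shell contribution is asserted rather than estimated, and one would still need an integrable majorant across all of $\R^n$ to apply dominated convergence --- the hard part is precisely that Proposition~\ref{Le.finite} controls the limit, not the difference quotient. The second alternative has a structural problem: $g_k=\sup_{j\le k}a_j\mathbf{1}_{L_j}$ is \emph{not} of the form $c\mathbf{1}_L$, so Part (i) cannot be invoked on $g_k$. Even if you instead used a single-indicator minorant $g_k=(1/k)\mathbf{1}_{\{g\ge 1/k\}}\le g$, monotone convergence would produce only the lower bound $\DO(f,g)\ge\cdots$, with no mechanism to get the matching upper bound. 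The paper closes this by a two-sided sandwich: an indicator majorant $C\mathbf{1}_{K_g}\ge g$ and indicator minorants $(1/i)\mathbf{1}_{K_i}\le g$, with Fatou's lemma applied to the differences $\widetilde f_t-f_t$ and $f_t-f_{i,t}$ together with Rotem's pointwise derivative formula \eqref{Eq.CF13}, so that the Euclidean terms collapse and only the spherical terms $h_{K_i}\uparrow h_{K_g}$ remain to be handled by monotone convergence. Your proposal needs this bilateral control to be complete.
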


We need the following trivial lemma.
\begin{lemma}\label{Le.LC}
Let $f\in \LC$ and $g$ be a nonzero upper semi-continuous log-concave function with finite $L^1$ norm. Then for any $s,t>0$, we have $s\cdot f\oplus t\cdot g \in \LC$.
\end{lemma}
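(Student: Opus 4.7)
The plan is to verify that $h := s\cdot f \oplus t\cdot g$ satisfies the three defining properties of $\LC$: upper semi-continuity, log-concavity, and a nonzero finite $L^1$ norm. Writing $f = e^{-\phi}$ and $g = e^{-\psi}$ with $\phi, \psi$ lower semi-continuous convex, I would first record the identity $h(z) = e^{-\Phi(z)}$, where
$$\Phi(z) \;=\; \inf_{sx+ty=z}\bigl(s\phi(x) + t\psi(y)\bigr) \;=\; (s\phi^* + t\psi^*)^*(z);$$
this is a direct translation of the supremum-convolution formula from Section 2 into the exponent.

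From this identity, log-concavity and upper semi-continuity of $h$ are essentially free: any Legendre transform is lower semi-continuous and convex, so $\Phi$ is l.s.c.\ convex, and hence $h = e^{-\Phi}$ is u.s.c.\ and log-concave.

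For the nonzeroness of $\|h\|_1$, I would pick $x_0 \in \mathrm{int}\{f>0\}$ and $y_0\in\mathrm{int}\{g>0\}$. These interiors are nonempty because $\{f>0\} = \mathrm{dom}(\phi)$ is convex and $f$ has positive Lebesgue integral, and likewise for $g$. For $(x,y)$ in a sufficiently small neighborhood of $(x_0,y_0)$, the supremum defining $h(sx+ty)$ is at least $f(x)^s g(y)^t > 0$, so $h$ is strictly positive on the open set $s\cdot\mathrm{int}\{f>0\} + t\cdot\mathrm{int}\{g>0\}$, which already forces $\|h\|_1>0$.

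The main step, and the only one requiring an estimate, is $\|h\|_1 < \infty$. My approach is to invoke the integrability criterion \eqref{In.CF13}: since $f, g \in \LC$, there exist $c, c' > 0$ and $d, d' \in \R$ with $\phi(x) \geq c|x| + d$ and $\psi(y) \geq c'|y| + d'$ for all $x, y \in \R^n$. Setting $a := \min(c, c') > 0$ and using $|z| \leq s|x| + t|y|$ whenever $z = sx + ty$, one gets $s\phi(x) + t\psi(y) \geq a|z| + sd + td'$ for every admissible decomposition, and taking the infimum gives $\Phi(z) \geq a|z| + sd + td'$. This produces the dominating bound $h(z) \leq e^{-sd-td'}\, e^{-a|z|}$, which is integrable on $\R^n$. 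I expect this exponential-tail estimate to be the only step with any real content; everything else follows immediately from unpacking the definitions, consistent with the text's labelling of the lemma as ``trivial.''
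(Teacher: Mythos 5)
Your verification is correct in its overall structure, and for the finiteness step you take a genuinely different and more self-contained route than the paper. The paper disposes of $\VO(s\cdot f\oplus t\cdot g)<\infty$ by citing \cite[Proposition~2.6]{CF13}, whereas you derive an explicit exponential-tail bound: combining the linear minorants $\phi(x)\ge c|x|+d$, $\psi(y)\ge c'|y|+d'$ (which the paper itself extracts from \eqref{In.CF13} in the proof of Lemma~\ref{Le.finiteness}) with $|z|\le s|x|+t|y|$ gives $\Phi(z)\ge a|z|+sd+td'$ and hence integrability of $h=e^{-\Phi}$. This makes the decay of the Asplund sum transparent rather than delegated to a citation, which is a real gain. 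Your treatment of upper semi-continuity and log-concavity via $\Phi=(s\phi^*+t\psi^*)^*$ matches the paper's ``directly from the definition.''

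One point needs tightening: in two places you quietly upgrade the hypothesis on $g$. You write ``since $f,g\in\LC$'' and you pick $y_0\in\mathrm{int}\{g>0\}$, but the lemma only assumes $g$ is a nonzero u.s.c.\ log-concave function with finite $L^1$ norm; it does not assert $g\in\LC$ (which would require a \emph{positive} $L^1$ norm), so $\{g>0\}$ could have empty interior. For the positivity of $\|h\|_1$ this is easily repaired and costs nothing: a single point $y_0$ with $g(y_0)>0$ gives $h(sx+ty_0)\ge f(x)^s g(y_0)^t$, so $h>0$ on the open set $s\,\mathrm{int}\{f>0\}+ty_0$ --- which is precisely the lower bound the paper's own proof uses. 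For the finiteness estimate the linear lower bound on $\psi$ is genuinely needed: it follows if $g$ has positive integral (so that \eqref{In.CF13} applies) or if $g$ has compact support (so $\psi\equiv+\infty$ outside a bounded set), which covers every place the paper actually invokes this lemma (see Lemma~\ref{i-to-ii}), but it does not follow from the stated hypothesis alone. You should say explicitly which of these conditions you are using.
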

\begin{proof}
It follows directly from the definition of Asplund sum that $s\cdot f\oplus t\cdot g$ is an upper semi-continuous, log-concave function. By \cite[Proposition 2.6]{CF13}, it follows that $s\cdot f\oplus t\cdot g$ has finite $L^1$ norm. Hence, it only remains to show that $s\cdot f\oplus t\cdot g$ has a non-zero $L^1$ norm. This is trivial to see, since, if $g(x_0)>0$, then
\begin{equation}
	(s\cdot f\oplus t\cdot g)(x)=\sup_{x=x_1+x_2}f\left(\frac{x_1}{s}\right)^sg\left(\frac{x_2}{t}\right)^t\ge f\left(\frac{x-tx_0}{s}\right)^sg\left(x_0\right)^t.
\end{equation}
This implies that $(s\cdot f\oplus t\cdot g)$ is not almost everywhere zero and consequently, $s\cdot f\oplus t\cdot g \in \LC$.
\end{proof}
 
We will need the following point-wise variational formula established in \cite[Lemma 3.1]{Rot22}: If $f=e^{-\phi}\in \LC$ and $g=e^{-\psi}$ is an upper semi-continuous, log-concave function with compact support, then for almost all $x\in \R^n$, we have
\begin{align}\label{Eq.CF13}
\frac{d}{dt} \big(f\oplus t\cdot g \big)(x)\bigg|_{t=0^+} =\psi^\ast(\nabla \phi(x))e^{-\phi(x)}.
\end{align}

The following lemma shows that to establish Theorem \ref{Th.variation}, it suffices to show Statement \ref{statement 1}) in Theorem \ref{Th.variation}. 
\begin{lemma}\label{i-to-ii}
If Statement \ref{statement 1}) holds in Theorem \ref{Th.variation}, then Statement \ref{statement 2}) holds in Theorem \ref{Th.variation}.
\end{lemma}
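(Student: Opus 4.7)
The plan is to derive Statement \ref{statement 2}) from Statement \ref{statement 1}) by combining a sandwiching argument with the pointwise variational formula \eqref{Eq.CF13}. First I would reduce to the case $g(o)=1$ via the scaling identity $f\oplus t\cdot(cg)=c^t(f\oplus t\cdot g)$, which yields $\DO(f,cg)=\DO(f,g)+(\ln c)\VO(f)$; both sides of \eqref{Main.Variation-new} shift by the same quantity $(\ln c)\VO(f)$ under $g\mapsto cg$ (the bulk integrand via $\psi^*\mapsto\psi^*+\ln c$, the boundary integral unchanged since $K_{cg}=K_g$). With $g(o)=1$ one has $(f\oplus t\cdot g)(x)\ge f(x)$ pointwise.

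Next I would split
\[
\VO(f\oplus t\cdot g)-\VO(f)=A(t)+B(t),
\]
where $A(t)=\int_{K_f}[(f\oplus t\cdot g)(x)-f(x)]\omega(x)\,dx$ is the bulk term and $B(t)=\int_{(K_f+tK_g)\setminus K_f}(f\oplus t\cdot g)(x)\omega(x)\,dx$ is the shell term. The convexity of $\phi$ gives $\phi(x-ty)\ge\phi(x)-t\langle\nabla\phi(x),y\rangle$ a.e.\ on $\mathrm{int}(K_f)$, so taking the supremum over $y$ in $(f\oplus t\cdot g)(x)=\sup_y f(x-ty)g(y)^t$ delivers the pointwise estimate $(f\oplus t\cdot g)(x)\le f(x)e^{t\psi^*(\nabla\phi(x))}$. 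Combined with \eqref{Eq.CF13} and the monotone bound $(e^{ts}-1)/t\le(e^{t_0 s}-1)/t_0$ for $0<t\le t_0$ and $s\ge 0$, the dominated convergence theorem yields $\lim_{t\to 0^+}A(t)/t=\int\psi^*(\nabla\phi)f\omega\,dx$.

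For $B(t)$ I would sandwich $g$ between $\|g\|_\infty\mathbf{1}_{K_g}$ from above and $c\mathbf{1}_{L_c}$ from below, where $L_c=\{g\ge c\}$ (compact convex, with $o\in L_c$ for small $c$). Applying Statement \ref{statement 1}) to each indicator-type bound and subtracting the corresponding bulk contribution (computed by the same DCT argument as for $A(t)$, specialized to the indicator case $\psi^*=\ln c+h_L$), the limits of the resulting shell integrals equal $\int_{\partial K_f}h_L(\nu_{K_f})f\omega\,d\mathcal{H}^{n-1}$. Monotonicity of the Asplund sum then yields
\[
\int_{\partial K_f}h_{L_c}(\nu_{K_f})f\omega\,d\mathcal{H}^{n-1}\le\liminf_{t\to 0^+}\frac{B(t)}{t}\le\limsup_{t\to 0^+}\frac{B(t)}{t}\le\int_{\partial K_f}h_{K_g}(\nu_{K_f})f\omega\,d\mathcal{H}^{n-1}.
\]
Since $L_c\uparrow\{g>0\}$ with closure $K_g$ as $c\downarrow 0$, the support functions satisfy $h_{L_c}\uparrow h_{K_g}$ on $\sphere$, and monotone convergence closes the sandwich. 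Summing $\lim A(t)/t$ and $\lim B(t)/t$ recovers \eqref{Main.Variation-new}.

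The main obstacle is justifying the DCT in the bulk step: the dominant $f(x)(e^{t_0\psi^*(\nabla\phi(x))}-1)/t_0$ must be integrable against $\omega$ for some $t_0>0$. Using $\psi^*\le h_{K_g}+\ln\|g\|_\infty$ and $h_{K_g}(y)\le R|y|$ with $R=\max_{v\in\sphere}h_{K_g}(v)$, this reduces to an exponential moment $\int f\,e^{t_0 R|\nabla\phi|}\omega\,dx<\infty$. Exploiting the convexity estimate $|\nabla\phi(x)|\le\phi(x+v)-\phi(x)$ for a unit vector $v$ in the gradient direction, choosing $t_0$ small enough ensures that $(1+t_0R)\phi$ eventually dominates $t_0 R\,\phi(\,\cdot\,+v)$ in view of the linear lower bound \eqref{In.CF13} on $\phi$, which together with Proposition \ref{Le.finite} secures the required integrability.
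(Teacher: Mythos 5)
The proposal takes a genuinely different route from the paper — a bulk/shell decomposition plus a direct dominated-convergence argument on the bulk, whereas the paper sandwiches $g$ between $(1/i)\mathbf{1}_{K_i}$ and $C\mathbf{1}_{K_g}$, applies Fatou's lemma only to \emph{differences} $\widetilde f_t - f_t$ and $f_t - f_{i,t}$ (which are nonnegative, so Fatou needs no dominant), and then invokes Statement i) for the two indicator envelopes. Unfortunately, the DCT step in your bulk term $A(t)$ has a genuine gap.

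You reduce the integrability of the dominant to the exponential moment $\int_{\R^n} f\,e^{t_0 R|\nabla\phi|}\omega\,dx<\infty$, and argue for this via the convexity estimate $|\nabla\phi(x)|\le\phi(x+v)-\phi(x)$, claiming that $(1+t_0R)\phi$ eventually dominates $t_0R\,\phi(\cdot+v)$ because of the linear lower bound \eqref{In.CF13}. This does not work: \eqref{In.CF13} only bounds $\phi$ \emph{from below}, so it gives no control on how quickly $\phi$ can grow, and a convex function can blow up arbitrarily fast (in particular at $\partial K_f$ when $K_f$ is bounded). Take, for instance, $\phi(x)=(1-|x|^2)^{-1}$ on $B_2^n$ (and $+\infty$ outside), so $f=e^{-\phi}\in\LO$ and, since $\phi(x)-\phi(o)\sim |x|^2$, condition \eqref{condition-sup<infty} holds. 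Here $|\nabla\phi(x)|=\tfrac{2|x|}{(1-|x|^2)^2}\sim 2\phi(x)^2$ near $\partial B_2^n$, so for every $t_0>0$ the integrand $f\,e^{t_0R|\nabla\phi|}\omega=e^{-\phi+2t_0R\phi^2}\omega\to\infty$ and the exponential moment diverges. Moreover, for $x$ near $\partial K_f$ the point $x+v$ lies outside $K_f$, so $\phi(x+v)=+\infty$ and your convexity estimate gives no information at all. (A similar problem occurs with $K_f=\R^n$ and, e.g., $\phi(x)=e^{e^{|x|^2}}$.) Note that since Proposition \ref{Le.finite} gives only a \emph{linear} moment $\int f|\nabla\phi|\omega\,dx<\infty$, nothing in the paper's toolkit upgrades this to an exponential moment — and indeed, it is false.

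The same unjustified DCT is then cited when you compute the bulk contribution for the indicator envelopes, so the gap propagates through the shell argument as well. The fix is essentially what the paper does: avoid DCT on $\frac{(f\oplus t\cdot g)-f}{t}$ entirely, and use the sandwich $f_{i,t}\le f_t\le\widetilde f_t$ together with Fatou on the nonnegative differences $\frac{\widetilde f_t-f_t}{t}$ and $\frac{f_t-f_{i,t}}{t}$ (whose a.e.\ limits are supplied by \eqref{Eq.CF13}), combined with the fact that $\DO(f,C\mathbf{1}_{K_g})$ and $\DO(f,(1/i)\mathbf{1}_{K_i})$ are already known by Statement i). Monotone convergence $h_{K_i}\uparrow h_{K_g}$ then closes the argument — with no exponential integrability assumption anywhere.
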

\begin{proof}
Assume $g=e^{-\psi}$ is an upper semi-continuous, log-concave function with compact support and $g(o)>0$. 
Since $g$ is compactly supported, there exists $C>0$ such that $g\leq C\mathbf{1}_{K_g}$. Note that $K_g$ is a compact convex set with $o\in K_g$. 
For each $i\in \mathbb{N}$, let 
$$K_i=\{x\in\mathbb{R}^n: g(x)\ge 1/i\} \subset K_g.$$
 Since $g(o)>0$, there exists $i_0>0$ such that for every $i\geq i_0$, the set $K_i$ is a compact convex set containing the origin. Moreover, $\overline{\cup_{i=i_0}^\infty K_i}=K_g$.
 
Let  $\widetilde{\psi}=-\ln C+\chi_{K_g}$ and $\psi_i=\ln i+\chi_{K_i}$ for each $i\geq i_0$ and set
$$f_t=f\oplus t\cdot g, \ \  \widetilde{f}_t=f\oplus t\cdot e^{-\widetilde{\psi}} \ \ \mathrm{and} \ \ f_{i, t}= f\oplus t\cdot e^{-\psi_i}.$$
Note that $f_{i,t}\leq f_t\leq \widetilde{f}_t$. From Lemma \ref{Le.LC}, one can see that $f_t$, $\widetilde{f}_t$, and $f_{i, t}$ are all belong to $\LC$ for any $t>0$. By Lemma \ref{Le.finiteness}, $\VO(\widetilde{f}_t)$ and $\VO(f_{i,t})$ are all positive and finite.
It follows from Fatou's lemma that
\begin{align*} \liminf_{t\to 0^+}\frac{\VO(\widetilde{f}_t)-\VO(f_t)}{t} =\liminf_{t\to 0^+}\int_{\mathbb{R}^n}\frac{\widetilde{f}_t-f_t}{t}\omega dx  \ge\int_{\R^n} \liminf_{t\to 0^+}\frac{\widetilde{f}_t-f_t}{t}\omega dx.
\end{align*} 
Applying  \eqref{Eq.CF13} to $f_t$ and $\widetilde{f}_t$, respectively, one gets   
\begin{align}
   \liminf_{t\to 0^+}\frac{\VO(\widetilde{f}_t)-\VO(f_t)}{t}\ge\int_{\R^n} \liminf_{t\to 0^+}\frac{\widetilde{f}_t-f_t}{t}\omega dx=\int_{\mathbb{R}^n} \Big(\widetilde{\psi}^\ast(\nabla \phi) -\psi^\ast(\nabla\phi)\Big)  e^{-\phi}\omega dx.\label{Fatou-1-1}
\end{align} 
A similar calculation shows that, for each $i\geq i_0$, 
\begin{align}
    \liminf_{t\to 0^+}\frac{\VO( f_t)-\VO(f_{i, t})}{t}
 \ge \int_{\mathbb{R}^n}\Big(\psi^\ast(\nabla\phi)-\psi_i^\ast(\nabla \phi)\Big)    e^{-\phi}\omega\,dx. \label{liminf--111-22}
\end{align} 
Applying \eqref{Main.Variation-indicator}  to $g=e^{-\widetilde{\psi}}$, one can get  
\begin{align}\DO(f, \widetilde{g}) 
&=\lim_{t\to 0^+} \frac{\VO(\widetilde{f}_t)-\VO(f)}{t}\nonumber \\
&=\int_{\mathbb{R}^n} \widetilde{\psi}^*(\nabla \phi(x)) e^{-\phi(x)}\omega(x)\,dx +
\int_{\partial K_f}h_{K_g}(\nu_{K_f}(x)) e^{-\phi(x)}\omega(x)\,d\mathcal{H}^{n-1}(x).\label{variation1-deltag}
\end{align} 
 This, together with  \eqref{Fatou-1-1}, implies that
\begin{equation}\label{lim-sup-1}
\begin{aligned} \DO(f,g)&=\lim_{t\to0^{+}}\frac{\VO(f\oplus t\cdot g)-\VO(f)}{t}\\   
&\leq  \limsup_{t\to 0^+}\frac{\VO(\widetilde{f}_t)-\VO(f)}{t}-\liminf_{t\to 0^+}\frac{\VO(\widetilde{f}_t)-\VO(f_t)}{t}\\ 
&\leq \int_{\mathbb{R}^n}  \psi^*(\nabla \phi(x)) e^{-\phi(x)}\omega(x)\,dx +
\int_{\partial K_f}h_{K_g}(\nu_{K_f}(x)) e^{-\phi(x)}\omega(x)\,d\mathcal{H}^{n-1}(x). 
\end{aligned}
\end{equation}
Applying \eqref{Main.Variation-indicator} to $g=e^{-\psi_i}$ for each $i\geq i_0$, and with the help of \eqref{liminf--111-22}, one can get
\begin{align*} \DO(f,g)  &\geq \liminf_{t\to 0^+}\frac{\VO(f_{i, t})-\VO(f)}{t}+\liminf_{t\to 0^+}\frac{\VO( f_t)-\VO(f_{i, t})}{t}\nonumber \\ & =\int_{\mathbb{R}^n}  \psi^*(\nabla \phi(x)) e^{-\phi(x)}\omega(x)\,dx +
\int_{\partial K_f}h_{K_i}(\nu_{K_f}(x)) e^{-\phi(x)}\omega(x)\,d\mathcal{H}^{n-1}(x).
\end{align*}  Note that $\overline{\bigcup_{i=i_0}^\infty K_i}=K_g$, and thus $h_{K_i}$ increases to its supremum $\sup_{i\geq i_0} h_{K_i}=h_{K_g}$.

It follows from the monotone convergence theorem  that
\begin{align} \DO(f,g) \geq  \int_{\mathbb{R}^n}  \psi^*(\nabla \phi(x)) \omega(x)e^{-\phi(x)}\,dx +
\int_{\partial K_f}h_{K_g}(\nu_{K_f}(x)) \omega(x)e^{-\phi(x)}\,d\mathcal{H}^{n-1}(x). \label{lim-inf-1}
\end{align} 
Statement \ref{statement 2}) now follows from \eqref{lim-sup-1} and \eqref{lim-inf-1} (if Statement \ref{statement 1})  is assumed).
\end{proof}

The next lemma reduces Statement \ref{statement 1}) in Theorem \ref{Th.variation} to the special case $c=1$. 
\begin{lemma}\label{c=1}
If Statement \ref{statement 1}) holds in Theorem \ref{Th.variation} for $c=1$, then Statement \ref{statement 1}) in Theorem \ref{Th.variation} holds for any $c>0$.
\end{lemma}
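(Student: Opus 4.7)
The plan is to exploit the fact that the Asplund sum interacts with the multiplicative constant $c$ in $g=c\mathbf{1}_L$ in a transparent way. Since $\mathbf{1}_L(y)^t=\mathbf{1}_L(y)$ for every $t>0$, the defining sup-convolution gives, for any $z\in\R^n$,
$$(f\oplus t\cdot g)(z)=\sup_{x+ty=z}f(x)\bigl(c\,\mathbf{1}_L(y)\bigr)^t=c^t\,(f\oplus t\cdot\mathbf{1}_L)(z),$$
and consequently $\VO(f\oplus t\cdot g)=c^t\,\VO(f\oplus t\cdot\mathbf{1}_L)$ by linearity of the $\omega$-weighted integral.

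With this identity in hand I would decompose the difference quotient defining $\DO(f,g)$ as
$$\frac{\VO(f\oplus t\cdot g)-\VO(f)}{t}=c^t\cdot\frac{\VO(f\oplus t\cdot\mathbf{1}_L)-\VO(f)}{t}+\VO(f)\cdot\frac{c^t-1}{t}.$$
The hypothesis that Statement \ref{statement 1}) of Theorem \ref{Th.variation} holds for $c=1$ shows that the first bracketed quantity converges to $\DO(f,\mathbf{1}_L)$ as $t\to 0^{+}$, while $\lim_{t\to 0^{+}}(c^t-1)/t=\ln c$ handles the second. Hence the limit defining $\DO(f,g)$ exists and
$$\DO(f,g)=\DO(f,\mathbf{1}_L)+\VO(f)\ln c.$$

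The last step is to verify that this matches the right-hand side of \eqref{Main.Variation-indicator} written for $\psi=-\ln c+\chi_L$. Using \eqref{relation-supp-dual} together with the shift rule \eqref{Eq.(avarphi)*} restricted to $c=1$ (equivalently, $(\phi+a)^{*}=\phi^{*}-a$) one has $\psi^{*}=h_L+\ln c$, so the volume integral in \eqref{Main.Variation-indicator} splits as
$$\int_{\R^n}\psi^{*}(\nabla\phi)e^{-\phi}\omega\,dx=\int_{\R^n}h_L(\nabla\phi)e^{-\phi}\omega\,dx+\VO(f)\ln c,$$
while the boundary integrand $h_L(\nu_{K_f})$ is independent of $c$ and therefore identical in the $c=1$ and general-$c$ formulas. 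Substituting the $c=1$ formula for $\DO(f,\mathbf{1}_L)$ into the displayed expression for $\DO(f,g)$ yields \eqref{Main.Variation-indicator} verbatim. Since the argument is a pure scaling, there is no real obstacle; the only point to verify carefully is the bookkeeping of the additive constant $\ln c$, which appears simultaneously on both sides (from differentiating $c^t$ on the left and from the $-\ln c$ shift in $\psi$ on the right) and cancels correctly.
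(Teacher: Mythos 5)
Your proof is correct and takes essentially the same route as the paper: both start from the scaling identity $f\oplus t\cdot(c\mathbf{1}_L)=c^t(f\oplus t\cdot\mathbf{1}_L)$, differentiate $c^t$ to produce the extra term $\VO(f)\ln c$, and absorb it into the volume integral via $\psi^*=h_L+\ln c$. The only cosmetic difference is that you make the decomposition of the difference quotient explicit before passing to the limit, which is a slightly more careful presentation of the same computation.
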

\begin{proof}
Let $L$ be a compact convex subset containing
the origin. Assume that  \eqref{Main.Variation-indicator} holds for $\mathbf{1}_{L}=e^{-\chi_{L}}$, that is,
\begin{align} 
\DO(f, \mathbf{1}_{L})=
     \int_{\mathbb{R}^n} \chi_{L}^\ast(\nabla \phi(x)) e^{-\phi(x)}\omega(x)\,dx +
\int_{\partial K_f}h_{L}(\nu_{K_f}(x) ) e^{-\phi(x)}\omega(x)\,d\mathcal{H}^{n-1}(x). \label{special-c-L}
\end{align}
Let $c>0$ and ${g}=e^{-{\psi}_0}=c\mathbf{1}_{L}$.
Note that $({\psi}_0)^\ast=\chi_{L}^\ast+\ln c$ and $K_{{g}}=L$. By definition, $f\oplus t\cdot {g}=c^t (f \oplus t\cdot \mathbf{1}_{L})$ and
\begin{align*}
\DO(f,c \mathbf{1}_{L}) &=\lim_{t\to 0^+}\frac{\VO(f\oplus t\cdot {g})-\VO(f)}{t}\\&=\lim_{t\to 0^+}\frac{c^t \VO(f\oplus t\cdot \mathbf{1}_{L})-\VO(f)}{t}\\
&= \VO(f)\cdot \ln c+\DO(f, \mathbf{1}_{L}). 
\end{align*}
By \eqref{relation-supp-dual} and \eqref{special-c-L}, one further has
\begin{align*}
\DO(f,c \mathbf{1}_{L}) &= \VO(f)\cdot \ln c+\DO(f, \mathbf{1}_{L})\\
&=\VO(f)\cdot \ln c+\int_{\mathbb{R}^n} h_{L}(\nabla \phi(x)) e^{-\phi(x)}\omega(x)\,dx +
\int_{\partial K_f}h_{L}(\nu_{K_f}(x) ) e^{-\phi(x)}\omega(x)\,d\mathcal{H}^{n-1}(x)\\
&=\int_{\mathbb{R}^n} ({\psi}_0)^\ast(\nabla \phi(x)) e^{-\phi(x)}\omega(x)\,dx +
\int_{\partial K_f}h_{L}(\nu_{K_f}(x) ) e^{-\phi(x)}\omega(x)\,d\mathcal{H}^{n-1}(x). 
\end{align*}
This completes the proof.
\end{proof}

For $s>0$, recall that $E_s(f)=\{x\in \R^n: f(x)\geq s\}$ is the superlevel set of $f$. Let $f=e^{-\phi}\in \LC$ and $L$ be a compact convex set. By the definition of the Asplund sum, 
\begin{equation}
\label{eq 81614}
\begin{aligned}
	E_s(f\oplus t\cdot \mathbf{1}_L) &= \{x\in \R^n: e^{-\inf_{y\in \R^n}(\phi(x-y)+\chi_{tL}(y))}\geq s\}\\
	&= \{x\in \R^n: e^{-\inf_{y\in tL}\phi(x-y)}\geq s\}\\
	&= \{x\in \R^n: \sup_{y\in tL}e^{-\phi(x-y)}\geq s\}\\
	&= \bigcup_{y\in tL} \{x+y\in \R^n: e^{-\phi(x)}\geq s\}\\
	&= E_s(f)+tL.
\end{aligned}	
\end{equation}
Note that if $g$ is a generic log-concave function, there is an (albeit much more complicated) formula for the superlevel sets for $f\oplus t\cdot g$. 

For simplicity, for $f\in \LO$, we write $M_f=f(o)$, which is the maximum of $f$. For the rest of the section, when we write $E_s$, we exclusively mean $E_s(f)$.

The following two lemmas are key  results to establish the formula (\ref{Main.Variation-indicator}).
\begin{lemma}\label{Le.DCT1}
Let $\omega \in \mathscr{G}$, $f\in \LO$ and $L$ be a compact convex set containing the origin. If there exist $0<\delta_0<M_f$ and $c_0,c_1>0$ such that  
\begin{equation}
\label{eq 8161}
	c_0(M_f-s)^{\frac{1}{1+\alpha}} B_2^n\subset E_s\subset c_1B_2^n
\end{equation}
for some $\alpha\in (0,1)$ and all $s\in (M_f-\delta_0, M_f)$, then
\begin{align}\label{dominated-1}
\lim_{t\rightarrow 0^+} \int_{M_f-\delta_0}^{M_f} \frac{\widetilde{V}_\omega(E_s+tL)-{\widetilde{V}_\omega (E_s)}}{t}ds =\int_{M_f-\delta_0}^{M_f}   \lim_{t\rightarrow 0^+} \frac{\widetilde{V}_\omega (E_s+tL)-\widetilde{V}_\omega (E_s)}{t}ds <\infty.
\end{align}
\end{lemma}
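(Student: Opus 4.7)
The plan is to apply the dominated convergence theorem. Pointwise convergence of $F(s,t):=(\widetilde{V}_\omega(E_s+tL)-\widetilde{V}_\omega(E_s))/t$ as $t\to 0^+$ is immediate from Lemma \ref{Le.var.geo}: for each $s\in(M_f-\delta_0,M_f)$, the hypothesis $c_0(M_f-s)^{1/(1+\alpha)}B_2^n\subset E_s\subset c_1 B_2^n$ forces $E_s\in\mathcal{K}^n_o$, so applying Lemma \ref{Le.var.geo} with $K=E_s$ and $g=h_L$ (so that $K_t=[h_{E_s}+th_L]=E_s+tL$) gives $F(s,t)\to\widetilde{V}_{1,\omega}(E_s,L)<\infty$. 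What remains is the construction of a $t$-uniform dominating function on $(M_f-\delta_0,M_f)$ that is Lebesgue integrable in $s$.

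The anticipated obstacle is obtaining the correct integrable exponent. A naive estimate---using $\omega(x)\leq C|x|^{-n}$ from (A2) together with the Brunn--Minkowski bound $V_n((E_s+tL)\setminus E_s)=O(t)$---produces the dominant $C/(M_f-s)^{n/(1+\alpha)}$, which is not integrable for $n\geq 2$. My strategy to sidestep this is to do the radial integration first. By polar coordinates,
\[
\widetilde{V}_\omega(E_s+tL)-\widetilde{V}_\omega(E_s)=\int_{\sphere}\int_{\rho_{E_s}(u)}^{\rho_{E_s+tL}(u)}\omega(ru)\,r^{n-1}\,dr\,du.
\]
Setting $R_L:=\max_{\sphere}h_L$ and invoking (A2) together with the continuity of $\omega$ on $\R^n\setminus\{o\}$, one obtains $C_\omega>0$ with $|x|^n\omega(x)\leq C_\omega$ whenever $0<|x|\leq c_1+R_L$. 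For $t\leq 1$, the inner radial range lies in $[c_0(M_f-s)^{1/(1+\alpha)},c_1+R_L]$, so $\omega(ru)r^{n-1}\leq C_\omega/r$ on this range, yielding the inner bound $C_\omega\log(\rho_{E_s+tL}(u)/\rho_{E_s}(u))$.

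To control the logarithm uniformly in $u$, I use $h_{E_s+tL}=h_{E_s}+th_L$ and the characterization $\rho_K(u)=\min_{v\in\sphere}h_K(v)/\langle u,v\rangle$. Taking $v_*=\nu_{E_s}(\rho_{E_s}(u)u)$ as a minimizer for $\rho_{E_s}(u)$ and using the lower bound $h_{E_s}(v_*)\geq c_0(M_f-s)^{1/(1+\alpha)}$ (which follows from $E_s\supset c_0(M_f-s)^{1/(1+\alpha)}B_2^n$),
\[
\frac{\rho_{E_s+tL}(u)}{\rho_{E_s}(u)}\leq 1+\frac{th_L(v_*)}{h_{E_s}(v_*)}\leq 1+\frac{tR_L}{c_0(M_f-s)^{1/(1+\alpha)}}.
\]
Applying $\log(1+x)\leq x$, integrating over $\sphere$, and dividing by $t$ produces the uniform dominant
\[
\frac{\widetilde{V}_\omega(E_s+tL)-\widetilde{V}_\omega(E_s)}{t}\leq\frac{C}{(M_f-s)^{1/(1+\alpha)}},\qquad t\in(0,1],
\]
for a constant $C$ depending only on $n$, $c_0$, $R_L$, $C_\omega$. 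Since $\alpha\in(0,1)$ forces $1/(1+\alpha)<1$, this dominant is integrable on $(M_f-\delta_0,M_f)$, and the dominated convergence theorem delivers both the equality in \eqref{dominated-1} and its finiteness.
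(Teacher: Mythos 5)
Your proof is correct and reaches the same dominating function $C(M_f-s)^{-1/(1+\alpha)}$ as the paper's, but by a genuinely different route. The paper applies the Lagrange mean value theorem to $t\mapsto\widetilde{V}_\omega(E_s+tL)$, obtaining some $\theta\in(0,t)$ with $\frac{\widetilde{V}_\omega(E_s+tL)-\widetilde{V}_\omega(E_s)}{t}=\int_{S^{n-1}}\frac{h_L(u)}{h_{E_s+\theta L}(u)}\,d\widetilde{C}_\omega(E_s+\theta L,u)$, then bounds the denominator from below by $c_0(M_f-s)^{1/(1+\alpha)}$ and the total mass $\widetilde{C}_\omega(E_s+\theta L,\sphere)=\int_{\sphere}\rho_{E_s+\theta L}^n\,\omega(\rho_{E_s+\theta L}u)\,du$ by a constant via Condition~(A2); this requires knowing differentiability of $\widetilde{V}_\omega(E_s+tL)$ throughout the interval. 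You instead estimate the radial integral $\int_{\rho_{E_s}(u)}^{\rho_{E_s+tL}(u)}\omega(ru)r^{n-1}\,dr$ directly, using the bound $\omega(ru)r^{n-1}\le C_\omega/r$ (from (A2) and continuity on the compact shell) to produce a logarithm, and then control $\log\bigl(\rho_{E_s+tL}(u)/\rho_{E_s}(u)\bigr)$ via the support-function characterization $\rho_K(u)=h_K(v_*)/\langle u,v_*\rangle$ with $v_*=\nu_{E_s}(\rho_{E_s}(u)u)$, yielding $\rho_{E_s+tL}(u)/\rho_{E_s}(u)\le 1+tR_L/(c_0(M_f-s)^{1/(1+\alpha)})$ uniformly in $u$. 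Both proofs exploit the same two ingredients---(A2) for boundedness of $|x|^n\omega(x)$ near the origin, and the inradius lower bound $h_{E_s}\ge c_0(M_f-s)^{1/(1+\alpha)}$---but your version avoids the mean value theorem and the detour through the full variational identity of Lemma~\ref{Le.var.geo} (which you still invoke, correctly, only for the pointwise limit), trading it for an elementary and transparent estimate on the Minkowski perturbation of radial functions.
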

\begin{proof}
We need to find a dominating function to apply the dominated convergence theorem.

Let $s\in (M_f-\delta_0, M_f)$ be arbitrarily fixed and $t\in (0,1)$. Note that by Lemma \ref{Le.var.geo}  and  the Lagrange mean value theorem there exists $\theta\in (0,t)\subset (0,1)$ such that 
\begin{equation}\label{Eq.Lagrange}
\begin{aligned}
\frac{\VO(E_s+tL)-\VO(E_s)}{t}
&=\frac{\partial \VO(E_s+tL)}{\partial t}\bigg|_{t=\theta} 
\\&=\int_{S^{n-1}}\frac{h_L(u)}{h_{E_s+\theta L}(u)}d\widetilde{C}_\omega(E_s+\theta L,u).
\end{aligned} 
\end{equation}
Note that by \eqref{eq 8161} and the fact that $L$ contains the origin, on $S^{n-1}$, we have
\begin{equation}
\label{eq 8162}
	h_{E_s+\theta L}\geq h_{E_s}\geq c_0(M_f-s)^{\frac{1}{1+\alpha}}.
\end{equation}
On the other hand, we have
\begin{equation}
\label{eq 8163}
	E_s+\theta L\subset c_1\ball+L\subset c_2\ball,
\end{equation}
for some $c_2>0$ depending only on $L$. Here, we used the fact that $L$ is compact.

Combining \eqref{Eq.Lagrange}, \eqref{eq 8162}, and \eqref{eq 8163}, we have
\begin{equation}
\begin{aligned}
	\frac{\VO(E_s+tL)-\VO(E_s)}{t}&\leq \frac{\max_{S^{n-1}}h_L}{c_0(M_f-s)^{\frac{1}{1+\alpha}}} \widetilde{C}_\omega(E_s+\theta L, \sphere)\\
	&=\frac{\max_{S^{n-1}} h_L}{c_0(M_f-s)^{\frac{1}{1+\alpha}}} \int_{S^{n-1}}\big(\rho_{E_s+\theta L}(u)\big)^n\omega (\rho_{E_s+\theta L}(u)u)du.
\end{aligned}
\end{equation}
Condition (A2) now implies the existence of $c_3>0$ such that 
\begin{equation}
\label{eq 8165}
	\frac{\VO(E_s+tL)-\VO(E_s)}{t}\leq c_3 (M_f-s)^{-\frac{1}{1+\alpha}}.
\end{equation}
Since $\alpha\in (0,1)$, the right-hand side of \eqref{eq 8165} is integrable. The desired result now follows from the dominated convergence theorem.
\end{proof}

\begin{lemma}\label{Le.DCT2}
Let $\omega \in \mathscr{G}$, $f\in \LO$ and $L$ be a compact convex set containing the origin. If there exist $0<\delta_0<M_f$ and $c_0>0$ such that 
\begin{equation}
\label{eq 8167}
	c_0B_2^n\subset E_s,
\end{equation}
 for all $s\in (0, M_f- \delta_0)$, then,
\begin{equation}\label{dominated-2}
\lim_{t\rightarrow 0^+} \int_0^{M_f-\delta_0} \frac{\widetilde{V}_\omega (E_s+tL)-{\widetilde{V}_\omega (E_s)}}{t}ds =\int_0^{M_f-\delta_0}   \lim_{t\rightarrow 0^+} \frac{\widetilde{V}_\omega (E_s+tL)-{\widetilde{V}_\omega (E_s)}}{t}ds <\infty.
\end{equation}
\end{lemma}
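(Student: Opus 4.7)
The plan is to follow the same template as the proof of Lemma \ref{Le.DCT1}: derive, for each $s$ in the integration range, an upper bound on the difference quotient that is uniform in small $t>0$, verify that this $s$-dependent bound is integrable on $(0, M_f - \delta_0)$, and conclude by dominated convergence. The genuine new difficulty relative to Lemma \ref{Le.DCT1} is that the level sets $E_s$ are no longer trapped in a fixed compact ball; as $s\to 0^+$, $E_s$ is allowed to grow, so I must simultaneously control both its size and the growth of $\omega$ on it.

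Fix $s\in (0, M_f-\delta_0)$. Since $E_s$ is closed (by upper semicontinuity of $f$), convex, and contains $c_0 B_2^n$, and since the integrability of $f$ combined with \eqref{In.CF13} yields constants $a>0,\ b\in\R$ with $\phi(x)\geq a|x|-b$, so that $E_s\subset R(s)B_2^n$ for $R(s):=(b-\ln s)/a$, the set $E_s$ belongs to $\mathcal{K}_o^n$. Lemma \ref{Le.var.geo} with $g=h_L$ then gives the pointwise limit $\lim_{t\to 0^+}\frac{\VO(E_s+tL)-\VO(E_s)}{t}=\widetilde{V}_{1,\omega}(E_s,L)$, which is finite for every such $s$. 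To produce a uniform dominating function, I would apply the mean value theorem exactly as in \eqref{Eq.Lagrange}: for each $t\in(0,1)$ there is $\theta=\theta(s,t)\in(0,t)$ with
\[
\frac{\VO(E_s+tL)-\VO(E_s)}{t}=\int_{\sphere}\frac{h_L(u)}{h_{E_s+\theta L}(u)}\,d\widetilde{C}_\omega(E_s+\theta L,u).
\]
Since $c_0 B_2^n\subset E_s+\theta L$, one has $h_{E_s+\theta L}\geq c_0$ on $\sphere$; and since $L$ is convex with $o\in L$ one has $\theta L\subset L$ for $\theta\in(0,1)$, hence $\rho_{E_s+\theta L}\leq \rho_{E_s+L}\leq R(s)+D_L$ with $D_L:=\max_{y\in L}|y|$. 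These observations together yield
\[
\frac{\VO(E_s+tL)-\VO(E_s)}{t}\leq \frac{\max_{\sphere}h_L}{c_0}\int_{\sphere}\omega(\rho_{E_s+\theta L}(u)u)\,\rho_{E_s+\theta L}^n(u)\,du.
\]

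The hard part will be bounding $\omega(\rho u)\rho^n$ for $\rho\in[c_0,R(s)+D_L]$ uniformly in $\theta$, since this quantity is not monotone in $\rho$ and must be dominated by something integrable in $s$. Here I would invoke condition (A3): for any $\epsilon\in(0,a)$ there exists $R_\epsilon>0$ with $\omega(x)\leq e^{\epsilon|x|}$ for $|x|\geq R_\epsilon$, while continuity of $\omega$ on the compact annulus $\{c_0\leq |x|\leq R_\epsilon\}$ gives $\omega\leq C_\epsilon$ there. Combining these,
\[
\omega(\rho u)\rho^n \leq \bigl(C_\epsilon+e^{\epsilon(R(s)+D_L)}\bigr)(R(s)+D_L)^n,
\]
uniformly in $u\in\sphere$ and $\theta\in(0,1)$. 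Substituting $R(s)=(b-\ln s)/a$, the right-hand side is majorized by $C'(1+|\ln s|)^n s^{-\epsilon/a}$, and the choice $\epsilon/a<1$ makes this integrable near $s=0$ (and trivially on any interval bounded away from $0$). With this uniform-in-$(t,\theta)$ dominating function in hand, the dominated convergence theorem simultaneously delivers the interchange of limit and integral in \eqref{dominated-2} and the finiteness claimed there.
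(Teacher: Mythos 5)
Your argument is correct, and it reaches the same dominating function (a power of $|\ln s|$ times $s^{-\beta}$ with $\beta<1$) via condition (A3) and the bound $\rho_{E_s}\lesssim|\ln s|$, but the technical route differs from the paper's. You recycle the template of Lemma~\ref{Le.DCT1} verbatim: apply the Lagrange mean value theorem to $t\mapsto\widetilde V_\omega(E_s+tL)$, invoke Lemma~\ref{Le.var.geo} to identify the intermediate derivative as $\int_{\sphere}h_L/h_{E_s+\theta L}\,d\widetilde C_\omega(E_s+\theta L,\cdot)$, bound the denominator below by $c_0$ from the inradius hypothesis, and bound $\rho_{E_s+\theta L}\leq R(s)+D_L$ from above. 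The paper instead switches to a different device for Lemma~\ref{Le.DCT2}: it cites \cite[Corollary 4.4]{HLXZ} for the uniform estimate $|\rho_{E_s+tL}(u)-\rho_{E_s}(u)|/t<c_1\rho_{E_s}(u)$ and then runs a pointwise (in $u$) mean value argument on $\overline\omega(\cdot,u)$, integrating over $\sphere$ afterward. Your approach is arguably cleaner in that it avoids the external citation and keeps a single mean value argument at the level of volumes, exactly as in Lemma~\ref{Le.DCT1}; the only price is checking that the MVT applies (i.e.\ that $t\mapsto\widetilde V_\omega(E_s+tL)$ is continuous on $[0,t]$ and differentiable on $(0,t)$), which you implicitly do via Lemma~\ref{Le.var.geo} applied at each $\theta>0$. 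Both routes ultimately exploit the same two facts: the inradius lower bound keeps $h_{E_s+\theta L}$ away from zero, and \eqref{In.CF13} plus (A3) tame the growth of $\omega(\rho u)\rho^n$ on $E_s+L$ enough to produce an $s$-integrable envelope.
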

\begin{proof}
As in the proof of Lemma \ref{Le.DCT1}, we need to find a dominating function. 
Let $s\in(0,M_f-\delta_0)$. By \eqref{eq 8167}, it can be shown (see, \emph{e.g.},  \cite[Corollary 4.4]{HLXZ}) that there exist $t_0>0$ and $c_1>0$ depending only on $c_0$ and $\max_{x\in L}|x|$ such that for every $t\in (0,t_0)$ and $u\in \sphere$, we have 
\begin{align}\label{Le.HLXZ}
\bigg|\frac{\rho_{E_s+tL}(u)-\rho_{E_s}(u)}{t}\bigg|< c_1\rho_{E_s}(u).
\end{align}
In particular, by \eqref{eq 8167} and the fact that $L$ is a compact set containing the origin, we may assume $t_0$ is sufficiently small so that for every $t\in (0,t_0)$, we have $c_0\ball\subset E_s+tL\subset 2E_s$. 

For each $t\in (0, t_0)$, the definition of $\overline{\omega}(t,u)$ (see \eqref{overline-omega}), the fundamental theorem of calculus, and \eqref{Le.HLXZ} imply the existence of $\vartheta=\vartheta(s,t, u)\in(\rho_{E_s}(u), \rho_{E_s+tL}(u))\subset (\rho_{E_s}(u), 2\rho_{E_s}(u))$ such that
\begin{equation}
	\label{eq 8168}
	\begin{aligned}
		&\bigg|\frac{\overline{\omega}(\rho_{E_s+tL}(u),u)-\overline{\omega}(\rho_{E_s}(u),u)}{t}\bigg|\\
=&\bigg|\frac{\overline{\omega}(\rho_{E_s+tL}(u),u)-\overline{\omega}(\rho_{E_s}(u),u)}{\rho_{E_s+tL}(u)-\rho_{E_s}(u)}\bigg|\times
\bigg|\frac{\rho_{E_s+tL}(u)-\rho_{E_s}(u)}{t}\bigg|\\
\le& c_1\rho_{E_s}(u)\cdot    \omega(\vartheta u)\vartheta ^{n-1}
	\end{aligned}
\end{equation} 
for almost all $u\in \sphere$, $t\in (0,t_0)$, and $s\in (0, M_f-\delta_0)$. In particular, by $\eqref{eq 8167}$, we have $\vartheta> c_0$. 

 Since $f\in \LC$, by \eqref{In.CF13}, there exist $c_2>0$ and $c_3\in \R$ such that
\begin{equation}
	E_s = \{x\in \R^n: e^{-\phi}\geq s\}\subset \{x\in \R^n: e^{-c_2|x|-c_3}\geq s\},
\end{equation}
which implies that 
\begin{equation}
\label{eq 81610}
	\rho_{E_s}\leq \frac{-c_3-\ln s}{c_2}.
\end{equation}

Condition (A3), together with the continuity of $\omega$ on $\R^n\setminus \{o\}$, implies that there exists $c_4>0$ such that, for every $x\in \R^n$ with $|x|\geq c_0$,
\begin{equation}
\label{eq 8169}
	\ln \omega(x)\leq \frac{c_2}{4} |x|+c_4.
\end{equation} 
By polar coordinates,
 \begin{align}\label{GDVofK-Es}
\VO(E_s+tL) =\int_{S^{n-1}}\overline{\omega}(\rho_{E_s+tL}(u),u)du.\end{align}
Hence, for every $t\in (0,t_0)$ and $s\in (0, M_f-\delta_0)$, by \eqref{eq 8168}, \eqref{eq 8169}, and the fact that $\vartheta\in (\rho_{E_s}(u), 2\rho_{E_s}(u))$,
\begin{equation}
\label{DCT2.1}
	\begin{aligned}
		\frac{\widetilde{V}_\omega(E_s+tL)-\widetilde{V}_\omega(E_s)}{t}&=
\int_{S^{n-1}}\frac{\overline{\omega}(\rho_{E_s+tL}(u),u)-\overline{\omega}(\rho_{E_s}(u),u)}{t}du\\
&\le c_1 \int_{S^{n-1}}\rho_{E_s}(u) \omega(\vartheta u)\vartheta^{n-1}du\\
&\le c_1\int_{S^{n-1}}\rho_{E_s}(u)\vartheta^{n-1}e^{\frac{c_2}{4}\vartheta+c_4}du\\
&\le 2^{n-1}c_1e^{c_4}\int_{S^{n-1}}\rho_{E_s}(u)^{n}e^{\frac{c_2}{2}\rho_{E_s}(u)}du.
	\end{aligned}
\end{equation}

We now check that the function 
\begin{equation}
	s\mapsto \int_{S^{n-1}}\rho_{E_s}(u)^{n}e^{\frac{c_2}{2}\rho_{E_s}(u)}du
\end{equation}
is integrable on $(0, M_f-\delta_0)$. Indeed, by \eqref{eq 81610}, we have
\begin{align*}
	\int_{S^{n-1}}\rho_{E_s}(u)^{n}e^{\frac{c_2}{2}\rho_{E_s}(u)}du&\leq c_2^{-n}\int_{S^{n-1}} (-c_3-\ln s)^n e^{-\frac{c_3+\ln s}{2}}du\\
 &= c_2^{-n}e^{-\frac{c_3}{2}}nV_n(\ball) (-c_3-\ln s)^n e^{-\frac{\ln s}{2}}. 
\end{align*}
It is simple to check the right-hand side of the above equation is integrable on $(0, M_f-\delta_0)$. 
\end{proof}

\begin{lemma}
\label{Le.DCT main}
Let $\omega\in\mathscr{G}$ and $f\in\LO$. If $f$ satisfies \eqref{condition-sup<infty} for some $\alpha\in (0,1)$, then for every compact convex $L$ containing the origin, we have
\begin{equation}
	 \DO(f, \mathbf{1}_{L})=\int_0^{M_f}   \widetilde{V}_{1,\omega}(E_s,L)ds <\infty.
\end{equation}
\end{lemma}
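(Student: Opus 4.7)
The plan is a Cavalieri plus dominated convergence argument that reduces the functional variation to the geometric one provided by Lemma \ref{Le.var.geo}. By the layer cake representation of the $\omega$-Orlicz moment, together with the superlevel set identity \eqref{eq 81614} and the fact that $o\in L$ forces the maximum of $f\oplus t\cdot \mathbf{1}_L$ to still be $M_f$,
\begin{align*}
\VO(f\oplus t\cdot \mathbf{1}_L) &= \int_0^{M_f} \VO(E_s+tL)\,ds,\\
\VO(f) &= \int_0^{M_f} \VO(E_s)\,ds.
\end{align*}
Subtracting, dividing by $t$, and letting $t\to 0^+$ yields
\begin{equation*}
\DO(f,\mathbf{1}_L) \;=\; \lim_{t\to 0^+}\int_0^{M_f}\frac{\VO(E_s+tL)-\VO(E_s)}{t}\,ds,
\end{equation*}
and for every $s\in(0,M_f)$ with $E_s\in \mathcal{K}_o^n$ Lemma \ref{Le.var.geo} (applied with $g=h_L$) identifies the pointwise integrand limit as $\widetilde{V}_{1,\omega}(E_s,L)$, in the sense of \eqref{eq 9.12.1}.

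The heart of the proof is then to exchange this limit with the integral. I would split $(0,M_f)$ as $(0,M_f-\delta_0)\cup(M_f-\delta_0,M_f)$ for a suitably small $\delta_0>0$ and appeal to Lemma \ref{Le.DCT2} on the lower piece and Lemma \ref{Le.DCT1} on the upper piece. To verify their hypotheses, the crucial input is \eqref{condition-sup<infty}: it produces $C>0$ with $|f(x)-M_f|\le C|x|^{\alpha+1}$ for small $|x|$, hence $f(x)\ge s$ whenever $|x|\le ((M_f-s)/C)^{1/(1+\alpha)}$, giving the lower inclusion $c_0(M_f-s)^{1/(1+\alpha)}B_2^n\subset E_s$ required in \eqref{eq 8161}. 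The complementary upper bound $E_s\subset c_1 B_2^n$ for $s>M_f-\delta_0$ follows from the monotonicity $E_s\subset E_{M_f-\delta_0}$ together with the decay \eqref{In.CF13}, which bounds $E_{M_f-\delta_0}$. For the lower piece, symmetrically, $E_{M_f-\delta_0}\subset E_s$ for $s<M_f-\delta_0$, and since $E_{M_f-\delta_0}\in \mathcal{K}_o^n$ for $\delta_0$ small (again a consequence of \eqref{condition-sup<infty}, which puts a small ball around $o$ inside $E_{M_f-\delta_0}$), the required inclusion $c_0 B_2^n\subset E_s$ holds. Once the exchange is justified, \eqref{dominated-1} and \eqref{dominated-2} each contribute a finite integral, and summing them delivers the stated identity together with finiteness.

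The main obstacle is the singular behaviour as $s\uparrow M_f$: the sets $E_s$ may collapse toward $\{o\}$, $h_{E_s}$ on $\sphere$ tends to zero, and the pointwise derivative $\widetilde{V}_{1,\omega}(E_s,L)$ blows up. The mean value argument in Lemma \ref{Le.DCT1} controls the difference quotient by a function of order $(M_f-s)^{-1/(1+\alpha)}$, and the restriction $\alpha<1$ in \eqref{condition-sup<infty} is precisely what makes this power integrable near $M_f$. Without such a quantitative modulus governing the approach to the maximum, there would be no dominating function and the limit could not be moved inside the integral; this is exactly the role played by the hypothesis \eqref{condition-sup<infty} in Theorem \ref{Th.variation}.
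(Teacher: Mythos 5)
Your proposal follows essentially the same route as the paper: layer-cake representation together with \eqref{eq 81614}, splitting $(0,M_f)$ at $M_f-\delta_0$, invoking Lemma \ref{Le.DCT2} on the lower piece and Lemma \ref{Le.DCT1} on the upper piece (with \eqref{condition-sup<infty} supplying the lower ball inclusion and the integrability of $f$ the upper one), and then identifying the pointwise limit via Lemma \ref{Le.var.geo}. The argument is correct.
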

\begin{proof}
	By layer-cake representation and \eqref{eq 81614}, we have 
	\begin{equation}
	\label{eq 81618}
	\begin{aligned}
		\DO(f, \mathbf{1}_{L}) &= \lim_{t\rightarrow 0^+} \int_{0}^{M_f} \frac{\widetilde{V}_{\omega}(E_s(f\oplus t\cdot \mathbf{1}_L))-\widetilde{V}_{\omega}(E_s(f))}{t}ds\\
		&= \lim_{t\rightarrow 0^+} \int_{0}^{M_f} \frac{\widetilde{V}_{\omega}(E_s(f)+tL)-\widetilde{V}_{\omega}(E_s(f))}{t}ds.
	\end{aligned}	
	\end{equation}
	
	Condition \eqref{condition-sup<infty} implies the existence of $\delta>0$ and $c_1>0$ such that 
	\begin{equation}
		f(x)\geq M_f-c_1|x|^{\alpha+1}\ \text{ on }\ \ball(\delta).
	\end{equation}
	This implies
	\begin{equation}
		E_s\supset \min\left\{\left(\frac{M_f-s}{c_1}\right)^{\frac{1}{\alpha+1}}, \delta\right\}\ball.
	\end{equation}
	We can choose $\delta_0>0$ sufficiently small so that for every $s\in (M_f-\delta_0, M_f)$, one has 
	\begin{equation}
		E_s\supset \left(\frac{M_f-s}{c_1}\right)^{\frac{1}{\alpha+1}}\ball.
	\end{equation}
	Moreover, since $f\in \LO$, there exists $c_2>0$ such that $E_s\subset c_2 \ball$ for every $s\in (M_f-\delta_0, M_f)$. On the other hand, by definition of superlevel sets, for every $s\in (0, M_f-\delta_0)$, there exists $c_3>0$ such that
	\begin{equation}
		E_s\supset E_{M_f-\delta_0}\supset c_3\ball.
	\end{equation}
	
	By \eqref{eq 81618}, Lemma \ref{Le.DCT1}, and Lemma \ref{Le.DCT2}, we have
	\begin{equation}
		\DO(f, \mathbf{1}_{L})= \int_{0}^{M_f} \lim_{t\rightarrow 0^+}\frac{\widetilde{V}_{\omega}(E_s(f)+tL)-\widetilde{V}_{\omega}(E_s(f))}{t}ds<\infty.
	\end{equation}
	The desired result now follows from Lemma \ref{Le.var.geo}.
\end{proof}

The following corollary follows immediately from the definition of $\widetilde{V}_{1,\omega}$ and Lemma \ref{Le.DCT main}.
\begin{corollary}\label{coro 8161}
	Let $\omega\in\mathscr{G}$ and $f\in\LO$. If $f$ satisfies \eqref{condition-sup<infty} for some $\alpha\in (0,1)$, then for every compact convex $L_1, L_2$ containing the origin, we have
	\begin{align}\label{linear}
\DO(f, \mathbf{1}_{L_1+L_2})=
\DO(f, \mathbf{1}_{L_1})+\DO(f, \mathbf{1}_{L_2}).
\end{align}
\end{corollary}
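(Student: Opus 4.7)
The proof plan rests on the integral representation for $\DO(f,\mathbf{1}_L)$ established in Lemma \ref{Le.DCT main}. By that lemma, under the hypotheses on $f$, for every compact convex set $L$ containing the origin,
\begin{equation}
\DO(f, \mathbf{1}_{L}) = \int_0^{M_f} \widetilde{V}_{1,\omega}(E_s, L)\, ds,
\end{equation}
and each of the three quantities $\DO(f,\mathbf{1}_{L_1})$, $\DO(f,\mathbf{1}_{L_2})$, $\DO(f,\mathbf{1}_{L_1+L_2})$ is finite (noting that $L_1+L_2$ is again a compact convex set containing the origin, so Lemma \ref{Le.DCT main} applies to it as well).

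The plan is therefore to reduce additivity in the second argument of $\DO(f,\cdot)$ to additivity in the second argument of the mixed quantity $\widetilde{V}_{1,\omega}(E_s,\cdot)$ and integrate. For each fixed $s \in (0,M_f)$, recall from \eqref{eq 9.12.1} that
\begin{equation}
\widetilde{V}_{1,\omega}(E_s, L) = \int_{\sphere} \frac{h_L(v)}{h_{E_s}(v)}\, d\widetilde{C}_\omega(E_s, v).
\end{equation}
Since support functions are linear with respect to Minkowski addition, $h_{L_1 + L_2}(v) = h_{L_1}(v) + h_{L_2}(v)$ for every $v \in \sphere$, and thus, because both summands are nonnegative and the measure $\widetilde{C}_\omega(E_s,\cdot)$ is finite (as $E_s \in \mathcal{K}_o^n$ for each $s \in (0,M_f)$, given the argument used in the proof of Lemma \ref{Le.DCT main}), one gets the pointwise identity
\begin{equation}
\widetilde{V}_{1,\omega}(E_s, L_1 + L_2) = \widetilde{V}_{1,\omega}(E_s, L_1) + \widetilde{V}_{1,\omega}(E_s, L_2).
\end{equation}

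Finally, integrating this identity over $s \in (0, M_f)$ and invoking Lemma \ref{Le.DCT main} for each of $L_1, L_2, L_1+L_2$ delivers the desired equality \eqref{linear}. There is essentially no obstacle here: the work was already done in Lemma \ref{Le.DCT main}, which justifies both the finiteness of all quantities and the integral representation. The one subtlety worth verifying is that $E_s$ indeed belongs to $\mathcal{K}_o^n$ for almost every $s \in (0, M_f)$, so that $h_{E_s} > 0$ on $\sphere$ and the integrand in the definition of $\widetilde{V}_{1,\omega}(E_s,\cdot)$ is finite; this follows from the lower bounds $E_s \supset (M_f - s)^{1/(\alpha+1)}c_1^{-1/(\alpha+1)} \ball$ near $M_f$ and $E_s \supset c_3 \ball$ away from $M_f$, both already established inside the proof of Lemma \ref{Le.DCT main}.
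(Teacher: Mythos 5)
Your proof is correct and follows exactly the route the paper intends: the paper states the corollary ``follows immediately from the definition of $\widetilde{V}_{1,\omega}$ and Lemma \ref{Le.DCT main},'' which is precisely the combination of the integral representation $\DO(f,\mathbf{1}_L)=\int_0^{M_f}\widetilde{V}_{1,\omega}(E_s,L)\,ds$ and the linearity $h_{L_1+L_2}=h_{L_1}+h_{L_2}$ in \eqref{eq 9.12.1} that you use. Your extra observation that $E_s\in\mathcal{K}_o^n$ (so the integrand is well defined) is a sensible check and is indeed established inside the proof of Lemma \ref{Le.DCT main}.
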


We now show that the proof of Statement \ref{statement 1}) in Theorem \ref{Th.variation} can be reduced to the special case where $c=1$ and $L\in  \mathcal{K}_o^n$. 

\begin{lemma}\label{cor-statement-i}
If Statement \ref{statement 1}) in Theorem \ref{Th.variation} holds for $c=1$ and $L\in\mathcal{K}_o^n$, then Statement i) in Theorem \ref{Th.variation} holds.
\end{lemma}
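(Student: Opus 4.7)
The plan is to approximate a compact convex set $L$ containing the origin (but not necessarily in its interior) by the inflated sets $L_\varepsilon = L + \varepsilon \ball$, which lie in $\mathcal{K}_o^n$, apply the assumed formula to $L_\varepsilon$, and then send $\varepsilon \to 0^+$. By the hypothesis, for each $\varepsilon > 0$,
\begin{align*}
\DO(f, \mathbf{1}_{L_\varepsilon}) = \int_{\mathbb{R}^n} h_{L_\varepsilon}(\nabla \phi(x)) e^{-\phi(x)} \omega(x)\, dx + \int_{\partial K_f} h_{L_\varepsilon}(\nu_{K_f}(x)) e^{-\phi(x)} \omega(x)\, d\mathcal{H}^{n-1}(x).
\end{align*}
Using $h_{L_\varepsilon} = h_L + \varepsilon h_{\ball}$ together with $h_{\ball}(y) = |y|$ on $\mathbb{R}^n$ and $h_{\ball} \equiv 1$ on $\sphere$, the right-hand side splits into the target expression (with $L$ in place of $L_\varepsilon$) plus two $\varepsilon$-linear correction terms.

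For the left-hand side, I would invoke Corollary \ref{coro 8161} to decompose $\DO(f, \mathbf{1}_{L_\varepsilon}) = \DO(f, \mathbf{1}_L) + \DO(f, \mathbf{1}_{\varepsilon \ball})$. Then, by Lemma \ref{Le.DCT main} and the $1$-homogeneity of the support function in its underlying set,
\begin{equation*}
\DO(f, \mathbf{1}_{\varepsilon \ball}) = \int_0^{M_f} \widetilde{V}_{1,\omega}(E_s, \varepsilon \ball)\, ds = \varepsilon \int_0^{M_f} \widetilde{V}_{1,\omega}(E_s, \ball)\, ds = \varepsilon\, \DO(f, \mathbf{1}_{\ball}),
\end{equation*}
which vanishes as $\varepsilon \to 0^+$ since $\DO(f, \mathbf{1}_{\ball}) < \infty$ by Lemma \ref{Le.DCT main}.

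For the two $\varepsilon$-corrections on the right-hand side, the bulk correction $\varepsilon \int_{\R^n} |\nabla \phi| f \omega\, dx$ tends to $0$ because $\int_{\R^n} |\nabla \phi| f \omega\, dx < \infty$ by \eqref{finite-1-moment} of Proposition \ref{Le.finite} (noting that $f \in \LO$ guarantees $o \in \mathrm{int}(\dom(\phi))$). For the boundary correction $\varepsilon \int_{\partial K_f} f \omega\, d\mathcal{H}^{n-1}$, I would simply apply the hypothesis to the specific choice $L = \ball \in \mathcal{K}_o^n$: the resulting finite value $\DO(f, \mathbf{1}_{\ball})$ bounds $\int_{\partial K_f} h_{\ball}(\nu_{K_f}) f \omega\, d\mathcal{H}^{n-1} = \int_{\partial K_f} f \omega\, d\mathcal{H}^{n-1}$, so this correction also vanishes.

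Passing to the limit on both sides and using $\chi_L^\ast = h_L$ from \eqref{relation-supp-dual} yields the formula for compact convex $L$ with $o \in L$ in the case $c = 1$. Combined with Lemma \ref{c=1}, this gives the full Statement i). The only real obstacle is bookkeeping: one must be sure the boundary integral $\int_{\partial K_f} f \omega\, d\mathcal{H}^{n-1}$ is finite before the correction terms can be safely dropped, and this is obtained precisely by specializing the assumed formula to $L = \ball$.
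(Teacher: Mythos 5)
Your proof is correct and rests on the same core idea as the paper's: fatten $L$ by a Euclidean ball to land in $\mathcal{K}_o^n$, apply the assumed formula there, and peel off the ball's contribution using the linearity from Corollary \ref{coro 8161}. The difference is purely in execution. You add $\varepsilon \ball$ and let $\varepsilon \to 0^+$, which forces you to verify separately that $\DO(f,\mathbf{1}_{\varepsilon\ball}) = \varepsilon\,\DO(f,\mathbf{1}_{\ball})\to 0$ and that the two $\varepsilon$-linear correction integrals vanish via Proposition \ref{Le.finite} and the hypothesis applied to $\ball$. The paper instead sets $Q = L + \ball$ (effectively $\varepsilon = 1$, no limit at all): by Corollary \ref{coro 8161}, $\DO(f,\mathbf{1}_L) + \DO(f,\mathbf{1}_{\ball}) = \DO(f,\mathbf{1}_Q)$, and since $h_Q = h_L + h_{\ball}$ the right-hand side of the assumed formula for $Q$ splits into the target expression for $L$ plus precisely $\DO(f,\mathbf{1}_{\ball})$, which then cancels from both sides. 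Your closing observation — that the boundary integral $\int_{\partial K_f} f\omega\,d\mathcal{H}^{n-1}$ must be finite before correction terms can be dropped — is exactly the fact that licenses this direct cancellation; once you have that finiteness (via Lemma \ref{Le.DCT main}), the limit in $\varepsilon$ is unnecessary. Both routes are valid; the paper's is just the $\varepsilon = 1$ shortcut of yours.
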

\begin{proof}
Assume Statement \ref{statement 1}) in Theorem \ref{Th.variation} holds for $c=1$ and any $Q\in\mathcal{K}_o^n$.

By Lemma \ref{c=1}, it suffices to show the validity of Statement \ref{statement 1}) in Theorem \ref{Th.variation} with $c=1$ and $L$ a compact convex set containing the origin. Set $Q=L+B_2^n\in \mathcal{K}_o^n$. Then, by Corollary \ref{coro 8161},  we have
\begin{equation}
	\begin{aligned}
		\DO({f,\mathbf{1}_{L}})+\DO({f,\mathbf{1}_{B_2^n}})&=\DO({f,\mathbf{1}_{Q}})\\
		&=\int_{\mathbb{R}^n} h_{Q}(\nabla \phi)\omega e^{-\phi}\,dx +
\int_{\partial K_f}h_{Q}(\nu_{K_f}) \omega e^{-\phi}\,d\mathcal{H}^{n-1}(x)\\
		&= \int_{\mathbb{R}^n} h_{L}(\nabla \phi) \omega e^{-\phi}\,dx +
\int_{\partial K_f}h_{L}(\nu_{K_f}) \omega e^{-\phi}\,d\mathcal{H}^{n-1}(x) + \DO({f,\mathbf{1}_{B_2^n}}).
	\end{aligned}
\end{equation}
This concludes the proof.
\end{proof}

The last piece of the proof of Theorem \ref{Th.variation} is the next lemma.
\begin{lemma}\label{lemma 8162}
 Statement i) in Theorem \ref{Th.variation} holds when $c=1$ and $L\in \mathcal{K}_o^n$.
\end{lemma}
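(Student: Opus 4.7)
The plan is to combine Lemma \ref{Le.DCT main} with the coarea formula \eqref{coarea-formula} and the BV structure of log-concave functions recorded in \eqref{eq yz1}. First, by Lemma \ref{Le.DCT main},
\begin{equation}
\DO(f,\mathbf{1}_L)=\int_0^{M_f}\widetilde{V}_{1,\omega}(E_s,L)\,ds.
\end{equation}
Since $o$ is the maximum point of $f$ and $f$ is continuous at $o$ by \eqref{condition-sup<infty}, each superlevel set $E_s$ ($s<M_f$) is a convex body containing the origin in its interior, so $\widetilde{V}_{1,\omega}(E_s,L)$ is well-defined. Using \eqref{eq 9.12.1} together with the definition \eqref{dual-curvature-convex body} of $\widetilde{C}_\omega(E_s,\cdot)$, and the standard identity $\langle x,\nu_{E_s}(x)\rangle=h_{E_s}(\nu_{E_s}(x))$ valid for $\mathcal{H}^{n-1}$-almost every $x\in\partial E_s$, I would rewrite
\begin{equation}
\widetilde{V}_{1,\omega}(E_s,L)=\int_{\partial E_s}h_L(\nu_{E_s}(x))\,\omega(x)\,d\mathcal{H}^{n-1}(x).
\end{equation}

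Next I would identify this boundary integral with the anisotropic weighted perimeter $\operatorname{Per}_{L,\omega}(\partial E_s)=\TV_{L,\omega}(\mathbf{1}_{E_s})$. Since $\mathbf{1}_{E_s}$ has bounded variation with $D\mathbf{1}_{E_s}=-\nu_{E_s}\,d\mathcal{H}^{n-1}|_{\partial E_s}$, the definition in \eqref{eq 81620} gives exactly the expression above. Applying the coarea formula \eqref{coarea-formula} (which applies because $o\in\mathrm{int}(\dom(\phi))$, a consequence of \eqref{condition-sup<infty}), and using that $E_s=\emptyset$ for $s>M_f$, I obtain
\begin{equation}
\DO(f,\mathbf{1}_L)=\int_0^{M_f}\operatorname{Per}_{L,\omega}(\partial E_s)\,ds=\TV_{L,\omega}(f).
\end{equation}

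Finally, I would unpack $\TV_{L,\omega}(f)$ via the explicit BV decomposition \eqref{eq yz1} for $f=e^{-\phi}\in\LC$. Writing $Df=\nabla f\,dx-\nu_{K_f}f\,d\mathcal{H}^{n-1}|_{\partial K_f}$, the polar decomposition $dDf=\sigma_f\,d\|Df\|$ yields $-\sigma_f=\nabla\phi/|\nabla\phi|$ on the absolutely continuous part and $-\sigma_f=\nu_{K_f}$ on the singular part. Using positive $1$-homogeneity of $h_L$ and that $|\nabla f|=|\nabla\phi|e^{-\phi}$, the integrand $h_L(-\sigma_f)\,d\|Df\|$ becomes $h_L(\nabla\phi)e^{-\phi}\,dx$ on $\R^n$ and $h_L(\nu_{K_f})e^{-\phi}\,d\mathcal{H}^{n-1}|_{\partial K_f}$ on $\partial K_f$, so that
\begin{equation}
\TV_{L,\omega}(f)=\int_{\R^n}h_L(\nabla\phi(x))e^{-\phi(x)}\omega(x)\,dx+\int_{\partial K_f}h_L(\nu_{K_f}(x))e^{-\phi(x)}\omega(x)\,d\mathcal{H}^{n-1}(x).
\end{equation}
Since $L\in\mathcal{K}_o^n$ and $c=1$, one has $\chi_L^*=h_L$, so this matches \eqref{Main.Variation-indicator} exactly.

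The main obstacles are largely cosmetic: verifying $o\in\mathrm{int}(\dom(\phi))$ so that the coarea formula \eqref{coarea-formula} can be legitimately invoked (this follows from the continuity at $o$ forced by \eqref{condition-sup<infty}), and confirming that $\widetilde{V}_{1,\omega}(E_s,L)$ agrees with the anisotropic weighted perimeter rather than some twisted variant---which is resolved by the support-function identity on $\partial E_s$ combined with the convexity of the superlevel sets.
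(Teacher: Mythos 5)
Your argument is correct and follows essentially the same route as the paper's proof: use Lemma \ref{Le.DCT main} to express $\DO(f,\mathbf{1}_L)$ as $\int_0^{M_f}\widetilde{V}_{1,\omega}(E_s,L)\,ds$, identify $\widetilde{V}_{1,\omega}(E_s,L)$ with $\operatorname{Per}_{L,\omega}(\partial E_s)$ via \eqref{dual-curvature-convex body}, \eqref{eq 9.12.1}, and \eqref{eq yz1}, then apply the coarea formula \eqref{coarea-formula} and unpack $\TV_{L,\omega}(f)$. You spell out the intermediate support-function identity $\langle x,\nu_{E_s}(x)\rangle=h_{E_s}(\nu_{E_s}(x))$ and the verification that $o\in\mathrm{int}(\dom(\phi))$, which the paper leaves implicit, but there is no substantive difference in approach.
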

\begin{proof}
By \eqref{eq 81620}, \eqref{eq yz1}, and \eqref{eq 9.12.1}, we have 
\begin{align}
\label{eq 9.12.3}
\operatorname{Per}_{L,\omega}(\partial E_s)=TV_{L,\omega}(\mathbf{1}_{E_s})=
\int_{\partial E_s}h_L(\nu_{E_s}(x))\omega(x)d\mathcal{H}^{n-1}(x)
=\widetilde{V}_{1,\omega}(E_s,L).
\end{align}
It now follows from Lemma \ref{Le.DCT main}, \eqref{eq 9.12.3}, \eqref{coarea-formula}, \eqref{eq yz1}, and \eqref{eq 81620} that
\begin{equation}
	\widetilde{\delta}_{\omega}(f,\mathbf{1}_L)=TV_{L,\omega}(f)= \int_{\mathbb{R}^n}h_L(\nabla \phi) f\omega dx+
\int_{\partial K_f}h_L(\nu_{K_f}) f \omega d\mathcal{H}^{n-1}(x).
\end{equation}
\end{proof}

Theorem \ref{Th.variation} now follows from Lemmas \ref{i-to-ii}, \ref{cor-statement-i}, and \ref{lemma 8162}. Theorem \ref{Th.variation} motivates the following definition.
\begin{definition}\label{definition 4.4} Let $f=e^{-\phi}\in \LO$ and $\omega\in\mathscr{G}$. The Euclidean dual $\omega$-Orlicz curvature measure, with respect to the weight function $\omega$,  of $f=e^{-\phi}$, denoted by $\Cure(f, \cdot)$, is the Borel measure on $\R^n$ defined by: for any Borel subset $E\subset \R^n$, \begin{align}
    \label{eud-cur-1} 
    \Cure(f, E)=\int_{\{x\in \R^n:\ \nabla\phi(x)\in E\}}e^{-\phi(x)}\omega(x)\,dx=\int_{\{x\in \dom(\phi):\ \nabla\phi(x)\in E\}}e^{-\phi(x)}\omega(x)\,dx. 
\end{align} 
The spherical dual $\omega$-Orlicz curvature measure,  with respect to the weight function $\omega$,   of $f=e^{-\phi}$, denoted by $\Curs(f, \cdot)$, is the Borel measure on $\sphere$ defined by: for any Borel subset $\eta\subset \sphere$, 
\begin{align}
    \label{sphere-cur-1} 
    \Curs(f, \eta)=\int_{\{x:\partial K_f:\  \nu_{K_f}(x)\in \eta\}} e^{-\phi(x)}\omega(x)\,d\mathcal{H}^{n-1}(x). 
\end{align} 
\end{definition} 

With the above notations, one can rewrite \eqref{Main.Variation-new} as \begin{align}\label{Main.Variation}
\DO(f,g)=\int_{\mathbb{R}^n}\psi^\ast(y)d\Cure(f,y)+
\int_{S^{n-1}}h_{K_g}(v)d\Curs(f,v).
\end{align}

\section{The dual Orlicz Minkowski problem for log-concave functions}\label{Se.dualMin}

Throughout the section, we assume $\omega \in \mathscr{G}$ is fixed. The section focuses on the functional dual Orlicz Minkowski problem. 

\begin{problem}[The dual Orlicz Minkowski problem for $\Cure(f, \cdot)$] \label{complete-2} Given a nonzero finite Borel measure $\mu$ on $\R^n$. What are the necessary and sufficient conditions on $\mu$ so that there exist $f\in \LC$  such that $\mu=\Cure(f, \cdot)?$
\end{problem}
In particular, we focus our attention on the origin-symmetric case: $\mu$ is an even measure on $\R^n$, and $f,\omega$ are even functions.
It is worth pointing out that a similar problem can be posed for the measures $\Cure(f, 
\cdot)$ and $\Curs(f, \cdot)$ simultaneously. Given a pair of nonzero finite Borel measures $\mu$ on $\R^n$ and $\nu$ on $\sphere$, find the necessary and sufficient conditions on $\mu$ and $\nu$ so that $f\in \LC$ with $\mu= \Cure(f, \cdot)$ and  $\nu= \Curs(f, \cdot)$.

Let $\mathscr{M}$ be the set of nonzero even finite Borel measures on $\mathbb{R}^n$ not concentrated in any proper subspace of $\R^n$ with finite first moment.  Let $M_\mu$ be the interior of the convex hull of the closed support set of $\mu$, i.e.,
\begin{equation}
	M_{\mu}= \text{int} \big(\text{conv}(\text{supp}\,\mu)\big).
\end{equation}
It is simple to see that if $\mu\in \mathscr{M}$, then $o\in M_\mu$.  We point out that if $\phi$ is a $\mu$-integrable convex function, then $\phi$ must be finite on $M_\mu$. In particular, this means $\phi$ is finite in a neighborhood of the origin.

The following lemma, shown in \cite[Proposition 5.4]{Rot20}, is crucial. 

\begin{lemma}\label{rotem}
Let $\phi:\mathbb{R}^n\to
\mathbb{R}\cup\{+\infty\}$ be a lower semi-continuous function with  $\phi(o)<\infty$. Assume that $g: \mathbb{R}^n\to\mathbb{R}$ is bounded and continuous. Then at every point $x\in\mathbb{R}^n$ where $\phi^*$ is differentiable we have
\begin{equation}
\frac{d}{dt}\Big|_{t=0}(\phi+tg)^*(x)=-g(\nabla \phi^*(x)).
\end{equation}
\end{lemma}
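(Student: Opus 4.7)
My plan is to show that the function $h(t) := (\phi+tg)^*(x) = \sup_y F_t(y)$, with $F_t(y) := \langle x, y\rangle - \phi(y) - tg(y)$, is differentiable at $t=0$ with $h'(0) = -g(y^*)$, where $y^* := \nabla\phi^*(x)$. Being a supremum of affine functions of $t$, the map $h$ is convex, so its one-sided derivatives $h'_\pm(0)$ exist; boundedness of $g$ also gives $|h(t)-h(0)|\leq \|g\|_\infty|t|$. The task reduces to showing both one-sided derivatives equal $-g(y^*)$.

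I would extract both directions from two choices of approximate maximizers. For each small $t\neq 0$, pick $y_t$ with $F_t(y_t)\geq h(t)-t^2$. Using $F_0(y_t)\leq \phi^*(x) = h(0)$ and subtracting yields $h(t)-h(0)\leq -t g(y_t)+t^2$. For each small $\delta>0$, pick $y_\delta$ with $F_0(y_\delta)\geq h(0)-\delta$; since $h(t)\geq F_t(y_\delta)$, one obtains $h(t)-h(0)\geq -\delta-t g(y_\delta)$. Taking $\delta=|t|^{3/2}$ and dividing by $t$ (with appropriate sign) yields two-sided bounds on the difference quotients whose limits, by continuity of $g$, equal $-g(y^*)$ provided $y_t\to y^*$ and $y_\delta\to y^*$. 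Note that $\{y_t\}$ is itself a family of approximate maximizers of $F_0$: the identity $F_0(y_t)=F_t(y_t)+tg(y_t)$ together with $|h(t)-h(0)|\leq \|g\|_\infty|t|$ gives $F_0(y_t)\geq h(0)-2\|g\|_\infty|t|-t^2$.

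The main step is the compactness-and-identification claim: every sequence $(y_n)$ with $F_0(y_n)\to \phi^*(x)$ satisfies $y_n\to y^*$. Boundedness comes from the differentiability of $\phi^*$ at $x$, which forces $\phi^*$ to be finite in a neighborhood of $x$: if $|y_n|\to\infty$ along a subsequence with $y_n/|y_n|\to v\in S^{n-1}$, then for any small $s>0$ the inequality $\phi^*(x+sv)\geq F_0(y_n)+s\langle v,y_n\rangle$ forces $\phi^*(x+sv)=\infty$, a contradiction. For any cluster point $y'$, lower semi-continuity of $\phi$ gives $\phi(y')\leq \liminf \phi(y_n)\leq \langle x,y'\rangle-\phi^*(x)$; combined with the Fenchel--Young inequality this forces $\phi(y')=\phi^{**}(y')=\langle x,y'\rangle-\phi^*(x)$, hence $y'\in\partial\phi^*(x)=\{y^*\}$. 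Continuity of $g$ then yields $g(y_t),g(y_\delta)\to g(y^*)$ and completes the proof.

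The principal obstacle is this compactness-plus-identification argument. Without differentiability of $\phi^*$ at $x$, approximate maximizers of $F_0$ could escape to infinity or split among multiple cluster points; the hypothesis pulls double duty by supplying both local finiteness of $\phi^*$ near $x$ (which rules out unbounded maximizers along any direction $v$) and the singleton $\partial\phi^*(x)=\{y^*\}$. Lower semi-continuity of $\phi$, rather than convexity, suffices for the identification step via the duality equality $\phi(y')=\phi^{**}(y')$ at the specific cluster point.
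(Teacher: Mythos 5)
The paper does not contain its own proof of this lemma; it cites it directly as \cite[Proposition 5.4]{Rot20}, so there is no in-paper argument to compare against. Your blind proof, however, is correct and self-contained. The structure is a Danskin/envelope-theorem argument: you write $h(t)=(\phi+tg)^*(x)$ as a supremum of functions affine in $t$, observe $|h(t)-h(0)|\le\|g\|_\infty|t|$, bracket the difference quotient from above by $-g(y_t)+t$ using near-maximizers $y_t$ of $F_t$ and from below by $-g(y_\delta)-\delta/t$ using near-maximizers $y_\delta$ of $F_0$ (with $\delta=|t|^{3/2}$), and then show both families of approximate maximizers converge to $y^*=\nabla\phi^*(x)$. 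The compactness step (differentiability of the convex function $\phi^*$ at $x$ forces $x\in\mathrm{int}\,\mathrm{dom}(\phi^*)$, which rules out escape to infinity of approximate maximizers of $F_0$) and the identification step (lower semi-continuity of $\phi$ plus Fenchel--Young gives $\phi(y')=\phi^{**}(y')$ and hence $y'\in\partial\phi^*(x)=\{y^*\}$) are both correct, and you rightly observe that only lower semi-continuity of $\phi$ is needed, not convexity. One small stylistic remark: since you already establish $|h(t)-h(0)|\le\|g\|_\infty|t|$, the detour through convexity of $h$ and existence of one-sided derivatives is not logically needed---the two-sided squeeze on the difference quotient directly produces the limit---but it does no harm.
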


We also need the following lemma.

\begin{lemma}\label{Le.roc}
\cite[Theorem 10.9]{Roc70}
Let $\Omega\subset \R^n$ be an open convex set, and $\{f_i\}_{i\ge1}$ be a sequence of finite convex functions on $\Omega$. If $\{f_i\}_{i\ge1}$ is bounded for each $x\in \Omega$, then there exists a subsequence $\{f_{i_j}\}_{j\ge1}$  that converges to a finite convex function $f$ pointwisely on $\Omega$ and uniformly on any closed bounded subset of $\Omega$.
\end{lemma}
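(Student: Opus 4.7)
The plan is to combine a diagonal extraction on a countable dense subset with the strong local regularity that convexity enforces on pointwise-bounded sequences. First I would show that pointwise boundedness together with convexity already yields \emph{locally uniform} boundedness on $\Omega$. Fix $x_0 \in \Omega$ and choose $n+1$ affinely independent points $y_0,\dots,y_n \in \Omega$ whose convex hull $S$ contains $x_0$ in its interior; writing any $x \in S$ as a convex combination $x = \sum_j \lambda_j y_j$, convexity gives
\begin{equation*}
f_i(x) \le \sum_{j=0}^n \lambda_j f_i(y_j) \le \max_j |f_i(y_j)| \le \max_j M(y_j),
\end{equation*}
so $\{f_i\}$ is uniformly bounded above on $S$. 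A compactness and open-covering argument then delivers a uniform upper bound $\sup_i \sup_K f_i \le C(K) < \infty$ on every compact $K \subset \Omega$, and pointwise boundedness from below at a single interior point, reflected through convexity, upgrades this to a two-sided uniform bound on every compact subset of $\Omega$.

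Second, I would convert uniform two-sided bounds into locally uniform Lipschitz bounds: the classical convex-analytic fact that a convex function satisfying $|f| \le M$ on $B(x_0, 2r)$ is automatically $(M/r)$-Lipschitz on $B(x_0, r)$ applies uniformly to the family $\{f_i\}$, yielding equi-continuity on every compact subset of $\Omega$.

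Third, with equi-continuity and local uniform boundedness in hand, the Arzel\`a--Ascoli theorem supplies the desired subsequence. Pick a countable dense set $D = \{x_k\}_{k \ge 1} \subset \Omega$, extract by diagonalization a subsequence $\{f_{i_j}\}$ converging pointwise on $D$, and use equi-continuity together with the density of $D$ to promote this to uniform convergence on an exhausting sequence of compacta $K_1 \subset K_2 \subset \cdots$; a further diagonal extraction ensures that a single subsequence works for all compacta simultaneously. The pointwise limit $f$ is convex because convexity is preserved under pointwise limits, and it is finite since it is the uniform limit of uniformly bounded continuous functions on each compact subset.

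The main technical hurdle is the passage from uniform boundedness to uniform Lipschitz control in the second step---a standard convex-analytic estimate that ultimately rests on the monotonicity of difference quotients of convex functions along any line through $x_0$. Once this lemma is granted, the remainder of the proof is a routine Arzel\`a--Ascoli compactness argument.
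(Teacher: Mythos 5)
The paper does not prove this lemma: it is stated as an explicit citation of Rockafellar's Theorem~10.9 from \emph{Convex Analysis}, so there is no in-paper proof to compare against. Your reconstruction is correct and follows the standard route used in the reference itself: convexity upgrades pointwise bounds to local uniform bounds (simplex argument for the upper bound, reflection through an interior point for the lower bound), these yield local equi-Lipschitz control, and Arzel\`a--Ascoli combined with a diagonalization over a countable dense set and an exhaustion of $\Omega$ by compacta produces the subsequence; the limit inherits convexity (pointwise limits of convex functions are convex) and finiteness. The only minor quantitative slip is cosmetic: a convex function with $|f|\le M$ on $B(x_0,2r)$ is $2M/r$-Lipschitz on $B(x_0,r)$, not $M/r$-Lipschitz, but this does not affect the argument.
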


Denote by $\LP$ the set of all even non-negative integrable functions with respect to $\mu$ on $\mathbb{R}^n$. For any fixed $a>0$, consider the following optimization problem: 
\begin{align}\label{optim.P}
\Theta_a=\inf \left\{\int_{\mathbb{R}^n} \phi d \mu: \phi \in \LP,  \
\widetilde{V}_\omega\left(e^{-\phi^*}\right) \geq a
\right\}.
\end{align}

The following lemma is crucial for finding the optimizers for the optimization problem \eqref{optim.P}. The proof for this lemma essentially follows the same way as that in \cite[Lemma 17]{CK15}.

\begin{lemma}\label{Le.existence}
Let $\omega\in\mathscr{G}$ and $\mu\in\mathscr{M}$. If $\phi_i\in\LP$ is convex and
\begin{align}\label{In-4}
\sup_{i\in\mathbb{N}}\int_{\mathbb{R}^n}\phi_i(x)d\mu(x)<\infty,
\end{align}
then there exist a subsequence $\{\phi_{i_j}\}_{j\in\mathbb{N}}$ of $\{\phi_i\}_{i\in \mathbb{N}}$ and a convex function $\phi_0\in\LP$ such that
\begin{align}\label{two-terms}
\int_{\mathbb{R}^n}\phi_0 d\mu\le\liminf_{j\to\infty}\int_{\mathbb{R}^n}\phi_{i_j}d\mu
\quad\text{and}\quad
\widetilde{V}_\omega(e^{-\phi_0 ^\ast})\ge
\limsup_{j\to\infty}\widetilde{V}_\omega(e^{-\phi_{i_j}^\ast}).
\end{align}
\end{lemma}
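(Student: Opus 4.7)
The plan is to adapt the compactness argument of Cordero-Erausquin--Klartag \cite[Lemma 17]{CK15} to the Orlicz setting. The core of the proof is to extract a pointwise convergent subsequence of $\{\phi_i\}$ on $M_\mu$ via Lemma \ref{Le.roc}, then deduce both inequalities by Fatou-type arguments. The main new ingredient compared with \cite{CK15} is a dominated convergence step for the Orlicz moment $\widetilde{V}_\omega(e^{-\phi_{i_j}^*})$, for which conditions (A1) and (A3) on $\omega$ will play a decisive role.

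The first task is to establish uniform pointwise bounds for $\{\phi_i\}$ on compact subsets of $M_\mu$. Since each $\phi_i$ is even and convex, $\phi_i(o) \le \tfrac12(\phi_i(x)+\phi_i(-x)) = \phi_i(x)$ for every $x$, so integrating against $\mu$ gives $|\mu|\phi_i(o) \le \int \phi_i\,d\mu \le C$, and $\phi_i(o)$ is uniformly bounded. For a general $x \in M_\mu$, openness of $M_\mu$ allows a choice of $\lambda > 1$ with $\lambda x \in M_\mu$; by Carath\'eodory one may write $\lambda x = \sum_{k=1}^{n+1}\alpha_k y_k$ with $y_k \in \operatorname{supp}\mu$ and $\alpha_k > 0$. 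Small neighborhoods $V_k$ of the $y_k$ satisfy $\mu(V_k) > 0$, hence contain points $z_k$ with $\phi_i(z_k) \le C/\mu(V_k)$ by averaging. Convexity of $\phi_i$, plus the decomposition $x = \lambda^{-1}(\lambda x) + (1-\lambda^{-1})o$, then yields $\phi_i(x) \le \lambda^{-1}\sum \tilde\alpha_k \phi_i(z_k) + \phi_i(o)$, bounded uniformly in $i$. Lemma \ref{Le.roc} now produces a subsequence $\{\phi_{i_j}\}$ converging uniformly on compact subsets of $M_\mu$ to a finite convex function $\phi_0$ on $M_\mu$; extend $\phi_0$ to $\R^n$ as its lower semi-continuous convex hull (so $\phi_0 \equiv +\infty$ outside $\overline{M_\mu}$), preserving evenness and non-negativity. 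Fatou's lemma, combined with $\phi_{i_j} \ge 0$ and the fact that $\mu$ is supported in $\overline{M_\mu}$, then delivers the first inequality $\int \phi_0\,d\mu \le \liminf_j \int \phi_{i_j}\,d\mu$.

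For the moment inequality, pointwise convergence $\phi_{i_j}\to\phi_0$ on $M_\mu$ together with $\phi_0 \equiv +\infty$ outside gives, for every $y\in \R^n$,
\begin{equation*}
\phi_0^*(y) = \sup_{x \in M_\mu}\bigl\{\langle x,y\rangle - \phi_0(x)\bigr\} \le \sup_{x\in M_\mu}\lim_j\bigl\{\langle x,y\rangle - \phi_{i_j}(x)\bigr\} \le \liminf_j \phi_{i_j}^*(y),
\end{equation*}
and hence $e^{-\phi_0^*(y)} \ge \limsup_j e^{-\phi_{i_j}^*(y)}$ pointwise. To interchange the $\limsup$ with the Orlicz integral, the first step furnishes $r>0$ and $C_0>0$ with $\phi_{i_j} \le C_0$ on $\ball(r) \subset M_\mu$, which gives $\phi_{i_j}^*(y) \ge r|y| - C_0$ and the uniform domination $e^{-\phi_{i_j}^*(y)} \le e^{C_0}e^{-r|y|}$. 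Conditions (A1) and (A3) on $\omega$ (via the estimates from Lemma \ref{Le.finiteness}) ensure $\int_{\R^n}e^{-r|y|}\omega(y)\,dy<\infty$, so reverse Fatou yields $\widetilde{V}_\omega(e^{-\phi_0^*}) \ge \limsup_j \widetilde{V}_\omega(e^{-\phi_{i_j}^*})$, as required.

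The principal obstacle is the pointwise bound in the first step: turning an average estimate against $\mu$ into a pointwise bound on $M_\mu$. The evenness of both $\mu$ and the $\phi_i$ is indispensable, since it pins $\phi_i(o)$ at the minimum and makes it directly controlled by the $\mu$-integral, while the hypothesis that $\mu$ is not concentrated in any proper subspace guarantees $M_\mu$ is a genuine full-dimensional open neighborhood of the origin, keeping the barycentric weights in the Carath\'eodory decomposition bounded away from zero. A secondary subtlety concerns whether the lsc extension of $\phi_0$ is finite $\mu$-almost everywhere on $\partial M_\mu$; one may need to pass to a further diagonal subsequence and use the finite first moment of $\mu$ to control the behavior on the boundary so that $\phi_0$ genuinely lies in $\LP$.
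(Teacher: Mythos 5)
Your proposal is essentially correct and, for the moment inequality, takes a genuinely different route from the paper's. The paper (following \cite[Lemma 17]{CK15}) approximates $\phi_0^*$ from below by finite maxima $\phi_{0,k}^*(y) = \max_{i\le k}\{\langle x_i,y\rangle - \phi_0(x_i)\}$ over a dense sequence $\{x_i\}\subset M_\mu$, uses monotone convergence to send $k\to\infty$, and then for fixed $k_0$ exploits pointwise convergence at finitely many points to get $\phi_{i_j}^*\ge\phi_{0,k_0}^*-\varepsilon$ eventually, which yields the inequality with no integrability hypothesis beyond Lemma \ref{Le.finiteness}. You instead observe the pointwise upper semi-continuity $\phi_0^*(y)\le\liminf_j\phi_{i_j}^*(y)$ directly, produce a uniform dominating function $e^{C_0}e^{-r|y|}$ from the local boundedness of the $\phi_{i_j}$, and apply reverse Fatou; conditions (A1) and (A3) make this dominating function $\omega$-integrable. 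Your argument is shorter and more transparent, at the modest cost of the extra integrability check; the paper's sidesteps that check entirely. Also, the paper cites \cite[Lemma 16]{CK15} for the uniform pointwise bound $\phi_i(x_0)\le C_{\mu,x_0}\int\phi_i\,d\mu$, whereas you supply a self-contained Carath\'eodory argument. Both work.

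The one place where your write-up is thin is the first inequality. You invoke Fatou's lemma to conclude $\int\phi_0\,d\mu\le\liminf_j\int\phi_{i_j}\,d\mu$, but $\mu$ may charge $\partial M_\mu$, where you only know $\phi_0$ via its lsc/radial extension and do not directly have $\phi_{i_j}\to\phi_0$. You flag this in your last paragraph, but the suggested remedy (a further diagonal subsequence plus the finite first moment of $\mu$) is not what is needed. The correct fix uses convexity: for $x\in\partial M_\mu$ and $\lambda<1$, $\phi_{i_j}(\lambda x)\le\lambda\phi_{i_j}(x)+(1-\lambda)\phi_{i_j}(o)$, so sending $j\to\infty$ and then $\lambda\to1^-$ gives $\phi_0(x)\le\liminf_j\phi_{i_j}(x)$ even at boundary points, after which Fatou applies directly. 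The paper implements essentially this step by integrating $\phi_0(\lambda x)$, applying Fatou and convexity at the level of integrals, and finishing with monotone convergence as $\lambda\to1^-$. Once you replace your speculative fix with this convexity argument, your proof is complete.
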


\begin{proof} Applying \eqref{In-4} and \cite[Lemma 16]{CK15} to $\mu \in \mathscr{M}$ and $x_0 \in M_{\mu}$, one can find a constant $C_{\mu,x_0} >0$ such that for any $i\in \mathbb{N}$,
\begin{align}\label{CK'In}
\phi_i(x_0)\le C_{\mu, x_0} \int_{\mathbb{R}^n}\phi_i(x) d\mu(x) \leq   C_{\mu, x_0} \sup_{i\in \mathbb{N}}  \int_{\mathbb{R}^n}\phi_i(x) d\mu(x).
\end{align} 
That is, for any $x_0 \in M_{\mu}$, the sequence $\{\phi_i(x_0)\}_{i\ge 1}$ is  bounded. Thus, by Lemma \ref{Le.roc}, there exist a subsequence $\{\phi_{i_j}\}_{j\ge 1}$ of $\{\phi_i\}_{i\ge 1}$ and an even finite convex function $\phi_0$ on $M_{\mu}$ so that $\phi_{i_j}\to\phi_0 $ on $ M_\mu$, pointwise as $j\to\infty$. The even convex function $\phi_0: M_{\mu}\rightarrow [0, \infty)$ can be extended to a function on $\R^n$ by
\begin{equation}
\label{eq6}
\phi_0(x)=\left\{
\begin{aligned}
&\lim_{\lambda\to1^-}\phi_0(\lambda x),& x\in \overline{ M_\mu }; \\
&\ +\infty,  &x\notin \overline{ M_\mu }.
\end{aligned}
\right.
\end{equation} Note that the existence of $\lim_{\lambda\to1^-}\phi_0(\lambda x)$ is guaranteed by the fact that $\phi_0(\lambda x)$ is increasing with respect to $\lambda\in(0,1)$, an easy consequence following the fact that $\phi_0(x)$ is an even convex function. Consequently, for any  $\lambda\in(0,1)$, by Fatou's lemma, convexity of $\phi_{i_j}$, the fact that $\phi_{i_j}(o)\rightarrow \phi_{0}(o)$, and \eqref{In-4}, one has
\begin{align*}
\int_{\mathbb{R}^n}\phi_0(\lambda x)d\mu
&\le\liminf_{j\to\infty}\int_{\mathbb{R}^n}\phi_{i_j}(\lambda x)d\mu\\
&\le\lambda \liminf_{j\to\infty}\int_{\mathbb{R}^n}\phi_{i_j}(x)d\mu+
(1-\lambda)\phi_0(o)|\mu|<\infty.
\end{align*} 
This, the fact that $\mu((\overline{M_\mu})^c)=0$, and the monotone convergence theorem, immediately imply the first inequality in \eqref{two-terms}. In particular, $\phi_0\in \LP$.

Let $\{x_i\}_{i\in \mathbb{N}}$ be a dense sequence in $M_\mu$. For every $y\in \R^n$, by \eqref{eq6} and the continuity of $\phi_0$ in $M_{\mu}$, we have
$$\phi_0^\ast(y)=\sup_{x\in M_\mu}\big\{\langle x,  y\rangle -\phi_0(x)\big\}=\sup_{i\in \mathbb{N}}\big \{\langle x_i,  y\rangle-\phi_0 (x_i)\big\}.$$  
For $k\ge1$, let 
$$\phi^\ast_{0,k}(y)=\max_{i\in [1, k]\cap \mathbb{N}}\big\{ \langle x_i,  y\rangle -\phi_0(x_i)\big\}.$$
 Clearly, for sufficiently large $k$ (and consequently at least one $x_i$ for $i\leq k$ is not the origin), we have $e^{-\phi_{0,k}^\ast}\in \LC$, and thus  $\VO(e^{-\phi_{0,k}^\ast})<\infty$ due to Lemma \ref{Le.finiteness}. It follows from the monotone convergence theorem that
\begin{align}
\lim_{k\to\infty}\widetilde{V}_\omega(e^{-\phi_{0,k}^\ast})=
\widetilde{V}_\omega(e^{-\phi_0^\ast}).
\end{align} 
Thus, for any $\varepsilon>0$, there exists $k_0\in \mathbb{N}$ (big enough) such that
 $$\widetilde{V}_\omega(e^{-\phi_0 ^\ast})\ge\widetilde{V}_\omega(e^{-\phi_{0,k_0}^\ast})-\varepsilon.$$ 
 Since $\phi_{i_j}\to\phi_0$ pointwise at $\{x_1,\cdots,x_{k_0}\}$, there exists $j_0>0$ such that for all $j>j_0$, we have $\phi^\ast_{i_j}(y) \ge\phi_{0,k_0}^\ast(y)-\varepsilon$.
Hence, for all $j\geq j_0$, we have
\begin{align*}
\widetilde{V}_\omega(e^{-\phi_0 ^\ast})\ge \widetilde{V}_\omega(e^{-\phi_{0,k_0}^\ast})-\varepsilon
\ge e^{-\varepsilon}\widetilde{V}_\omega(e^{-\phi_{i_j}^\ast})-\varepsilon.
\end{align*}
Letting $j\rightarrow \infty$, we have
\begin{equation*}
	\widetilde{V}_\omega(e^{-\phi_0^*})\geq e^{-\varepsilon} \limsup_{j\rightarrow \infty}\widetilde{V}_{\omega}(e^{-\phi_{i_j}^*})-\varepsilon.
\end{equation*}
 This completes the proof since $\varepsilon>0$ is arbitrary.
\end{proof}

Let $t\in(0,\infty)$. Define 
\begin{align}\label{int-t-ball-1}
  b_t= \int_{tB_2^n}\omega(x)dx = \widetilde{V}_{\omega}(\mathbf{1}_{tB_2^n})
\end{align} 
Lemma \ref{Le.finiteness} implies that $b_t$ is finite for every $t$. An application of the monotone convergence theorem then implies 
\begin{equation}
\label{eq 8151}
	\lim_{t\rightarrow 0^+} b_t=0.
\end{equation}

We first observe that the set of functions satisfying the constraints of the optimization problem \eqref{optim.P} is non-empty. Indeed, by \eqref{eq 8151}, there exists $t_0>0$ such that 
 that $0<b_{t_0}\leq a$.
 Take the function
\begin{align}\label{tidle-varphi}
\widetilde{\phi}(x)=\log a-\log b_{t_0}+t_0|x|. 
\end{align} 
Since $\mu$ has a finite first moment, we have
$\widetilde{\phi}\in\LP$. Moreover, 
$$\widetilde{\phi}^\ast(x)=-\log a+\log b_{t_0}+\chi_{t_0B_2^n}.$$
Consequently
$\VO(e^{-\widetilde{\phi}^\ast})= a$. Thus, $\widetilde{\phi}$ satisfies the constraints of the optimization problem \eqref{optim.P}.

\begin{lemma}\label{theorem-with-0} Let $\omega\in\mathscr{G}$, $\mu\in\mathscr{M}$, and $a>0$. Then the optimization problem \eqref{optim.P} admits a solution, say $\phi_0$, such that, $\phi_0\in \LP$ is a lower semi-continuous convex function. 
\end{lemma}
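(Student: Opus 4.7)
The plan is to use the direct method of the calculus of variations, with Lemma \ref{Le.existence} playing the role of the compactness/lower semi-continuity statement. Let me outline the four steps.

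First, I would observe that the feasible set is non-empty and $\Theta_a$ is finite. This has already been verified via the function $\widetilde{\phi}$ defined in \eqref{tidle-varphi}, which satisfies $\widetilde{V}_\omega(e^{-\widetilde{\phi}^*}) = a$ and lies in $\LP$ since $\mu$ has finite first moment. Combined with $\phi \geq 0$ on every $\phi\in\LP$, we get $0 \leq \Theta_a < \infty$.

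Next, I would reduce to a minimizing sequence of convex lower semi-continuous functions. Take any minimizing sequence $\phi_i \in \LP$ with $\widetilde{V}_{\omega}(e^{-\phi_i^*}) \geq a$ and $\int \phi_i \, d\mu \to \Theta_a$. Replace each $\phi_i$ by its convex biconjugate $\phi_i^{**}$. Since $\phi_i \geq 0$, the zero function witnesses $\phi_i^{**} \geq 0$; since $\phi_i$ is even, so is $\phi_i^{**}$; the inequality $\phi_i^{**} \leq \phi_i$ gives $\phi_i^{**} \in \LP$ and $\int \phi_i^{**} d\mu \leq \int \phi_i d\mu$; and $(\phi_i^{**})^* = \phi_i^*$ preserves the constraint $\widetilde{V}_\omega(e^{-(\phi_i^{**})^*}) = \widetilde{V}_\omega(e^{-\phi_i^*}) \geq a$. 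So without loss of generality the $\phi_i$ are convex and lower semi-continuous.

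Then I would apply Lemma \ref{Le.existence} to the sequence $\{\phi_i\}$. Since $\sup_i \int \phi_i d\mu < \infty$ (the sequence is minimizing), the lemma produces a subsequence $\{\phi_{i_j}\}$ and a convex $\phi_0 \in \LP$ (extended as in \eqref{eq6} so that it is automatically lower semi-continuous on $\mathbb{R}^n$) with
\begin{equation}
\int_{\mathbb{R}^n}\phi_0\, d\mu \leq \liminf_{j\to\infty}\int_{\mathbb{R}^n}\phi_{i_j}\, d\mu = \Theta_a,
\qquad
\widetilde{V}_\omega(e^{-\phi_0^*}) \geq \limsup_{j\to\infty} \widetilde{V}_\omega(e^{-\phi_{i_j}^*}) \geq a.
\end{equation}
The second inequality says $\phi_0$ is feasible for \eqref{optim.P}, so $\int \phi_0\, d\mu \geq \Theta_a$ by definition of the infimum, and combined with the first inequality this forces $\int \phi_0 d\mu = \Theta_a$. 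Hence $\phi_0$ is an optimizer.

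Finally, the lower semi-continuity of $\phi_0$ comes for free from the way Lemma \ref{Le.existence} builds it: on $M_\mu$ it is a finite convex (hence continuous) function, on $\partial M_\mu$ it is defined by the monotone radial limit $\lim_{\lambda\to 1^-}\phi_0(\lambda x)$ (which is the standard lsc extension of a convex function to the closure of its domain), and it is $+\infty$ off $\overline{M_\mu}$. I do not foresee a serious obstacle here; the only delicate point is that the constraint is preserved only as an inequality in the limit, which is precisely why the problem is formulated with $\widetilde{V}_\omega(e^{-\phi^*}) \geq a$ rather than equality. The Legendre-transform replacement step in paragraph two is what lets us apply Lemma \ref{Le.existence} at all, and it is the one nontrivial structural observation in the argument.
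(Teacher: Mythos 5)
Your argument is correct and follows essentially the same route as the paper: feasibility, biconjugation of the minimizing sequence so that it lies in the convex lsc class, Lemma \ref{Le.existence} as the compactness/lower semicontinuity device, and then a standard direct-method conclusion. The one place where you diverge from the paper is the final step: you argue that the $\phi_0$ produced by Lemma \ref{Le.existence} is already lsc because the extension \eqref{eq6} is the radial (Rockafellar) closure of a finite convex function on $M_\mu$, whereas the paper simply replaces $\phi_0$ by $\phi_0^{**}$, noting that $(\phi_0^{**})^*=\phi_0^*$ and $\phi_0^{**}\le\phi_0$ keep both halves of \eqref{two-terms} intact. Your observation is true (the two agree, since the closure of a proper convex function equals its biconjugate), but it requires appealing to the internals of Lemma \ref{Le.existence}'s proof rather than treating it as a black box; the paper's one-line biconjugate substitution is self-contained and arguably preferable for that reason.
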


\begin{proof}
 Let $\{\phi_i\}_{i\in\mathbb{N}}$ be a minimizing sequence
to \eqref{optim.P}. Note that  $0\leq \phi_i^{\ast\ast}\le\phi_i$ and $(\phi_i^{\ast\ast})^\ast=\phi_i^{\ast}$. Thus
   $$\int_{\mathbb{R}^n}\phi_i^{\ast\ast}d\mu\le\int_{\mathbb{R}^n}\phi_i d\mu\ \  \mathrm{and} \ \  \widetilde{V}_\omega(e^{-(\phi_i^{\ast\ast})^\ast})=\widetilde{V}_\omega(e^{-\phi_i^\ast}).$$ 
   Hence, we may assume $\phi_i\in\LP$ is lower semi-continuous and convex for any $i\in \mathbb{N}$ (by possibly replacing $\phi_i$ by $\phi_i^{\ast\ast})$. Lemma \ref{Le.existence} now implies the existence of a convex function $\phi_0\in\LP$ such that \eqref{two-terms} holds. By a similar argument as before, one may replace $\phi_0$ by $\phi_0^{\ast\ast}$ and \eqref{two-terms} still holds. Thus, one gets that $\phi_0$ is also lower semi-continuous. Equation \eqref{two-terms} immediately shows that $\phi_0$ is a solution to the optimization problem \eqref{optim.P}.
\end{proof}
 
It is important to note that in order to use the Lagrange equation to conclude that a minimizer for the optimization problem \eqref{optim.P} is a solution to Problem \ref{complete-2}, one needs to be able to perturb the minimizer. Therefore, small perturbations of the minimizer have to satisfy the constraints of the optimization problem \eqref{optim.P} as well. One way to guarantee that is to show \emph{a priori} that the minimizer obtained in Lemma \ref{theorem-with-0} is actually a positive function. We will show in Lemma \ref{5.8} that, with additional assumptions, this is indeed true for sufficiently large $a>0$.

 The behavior of the following functional is essential: for $t\in (0, \infty)$, 
\begin{align}
\mathcal{F}_\omega(t)=\VO\big(e^{-t|x|}\big). \label{def-cal-H} 
\end{align} 
By Lemma \ref{Le.finiteness} with $f=e^{-t|x|}$, we see that $\mathcal{F}_\omega (t)$ is well defined  for all $t\in (0, \infty)$. 

Recall from \eqref{c0} that for each $\mu\in \mathscr{M}$, we have $0<\zeta_{\mu}<\infty$.

\begin{lemma}\label{Le.H-1} Let $\omega\in\mathscr{G}$, $\mu\in\mathscr{M}$, and $a>0$.  Let $\phi\in \LP$ be a lower semi-continuous, convex function such that $\widetilde{V}_\omega(e^{-\phi^\ast})\ge a$ and $\phi(o)=0$. Then,
\begin{align}\label{H-1}
\frac{a}{e}\leq\mathcal{F}_\omega\left(\frac{\zeta_{\mu}}{2\int_{\mathbb{R}^n}\phi \,d\mu+2|\mu|}\right).
\end{align}
\end{lemma}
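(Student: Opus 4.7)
The plan is to show the pointwise bound $\phi^{\ast}(y)\geq t_0|y|-1$ for all $y\in\R^n$, where $t_0=\frac{\zeta_{\mu}}{2|\mu|+2\int\phi\,d\mu}$, and then exponentiate and integrate. The key input is the Young--Fenchel inequality combined with the evenness of $\phi$ and an integration against $\mu$.

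First I would set up two elementary lower bounds on $\phi^{\ast}$. Since $\phi\in\LP$ is nonnegative with $\phi(o)=0$, the definition of the Legendre transform gives $\phi^{\ast}(y)\geq \langle o,y\rangle-\phi(o)=0$. Next, for any $x,y\in\R^n$ one has $\phi(x)+\phi^{\ast}(y)\geq\langle x,y\rangle$ by \eqref{equ-dual-1}, and applying this to $-x$ together with the evenness $\phi(-x)=\phi(x)$ yields
\begin{equation*}
\phi(x)+\phi^{\ast}(y)\geq|\langle x,y\rangle|.
\end{equation*}
Integrating against $\mu$ in $x$ and using the definition of $\zeta_{\mu}$ in \eqref{c0} (with $v=y/|y|$) gives, for every $y\neq o$,
\begin{equation*}
|\mu|\,\phi^{\ast}(y)\geq \zeta_{\mu}|y|-\int_{\R^n}\phi\,d\mu,\quad\text{i.e.,}\quad \phi^{\ast}(y)\geq\frac{\zeta_{\mu}|y|-\int\phi\,d\mu}{|\mu|}.
\end{equation*}

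Combining these two bounds, I would verify the pointwise inequality $\phi^{\ast}(y)\geq t_0|y|-1$ by splitting into the regimes $t_0|y|\leq 1$ and $t_0|y|>1$. In the first regime the inequality is immediate from $\phi^{\ast}\geq 0$. In the second, substituting the linear bound reduces the claim to the elementary algebraic inequality
\begin{equation*}
\Bigl(\tfrac{\zeta_{\mu}}{|\mu|}-t_0\Bigr)|y|\geq \tfrac{\int\phi\,d\mu}{|\mu|}-1,
\end{equation*}
which, with $t_0=\zeta_{\mu}/(2|\mu|+2\int\phi\,d\mu)$ and $|y|>1/t_0=2(|\mu|+\int\phi\,d\mu)/\zeta_{\mu}$, unwinds to $2|\mu|^2+3|\mu|\!\int\!\phi\,d\mu+(\int\phi\,d\mu)^2\geq 0$. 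This is the only step involving any computation; it is routine but is the technical heart of the argument since it explains the specific form of $t_0$.

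Finally, the pointwise bound gives $e^{-\phi^{\ast}(y)}\leq e\cdot e^{-t_0|y|}$, and multiplying by $\omega(y)$ and integrating yields
\begin{equation*}
a\leq \widetilde{V}_{\omega}(e^{-\phi^{\ast}})\leq e\int_{\R^n}e^{-t_0|y|}\omega(y)\,dy=e\,\mathcal{F}_{\omega}(t_0),
\end{equation*}
which is \eqref{H-1}. Both integrals are finite by Lemma \ref{Le.finiteness} applied to the log-concave functions $e^{-\phi^{\ast}}$ and $e^{-t_0|\cdot|}$. The only conceptual obstacle is recognizing that simply taking $t=\zeta_{\mu}/|\mu|$ in the linear bound does not quite yield a clean constant; one needs to tilt the slope slightly (the factor of $2$ in $t_0$) so that the line $t_0|y|-1$ lies below the piecewise-linear envelope $\max\{0,(\zeta_{\mu}|y|-\int\phi\,d\mu)/|\mu|\}$ for all $y$.
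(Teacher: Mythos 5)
Your proof is correct. It is close in spirit to the paper's argument but takes a more direct route. The paper introduces the sublevel set $K_\phi=\{\phi\le 1\}$, defines the ``inradius'' $r_\phi=\min_{v\in\sphere}h_{K_\phi}(v)$, and splits the work into two claims: $a/e\le\mathcal{F}_\omega(r_\phi)$ (via $\phi^*(y)\ge r_\phi|y|-1$) and $r_\phi\ge t_0$ (via the pointwise inequality $2r_\phi(\phi(x)+1)\ge|\langle x,v_0\rangle|$ integrated against $\mu$, where $v_0$ realizes the minimum in $r_\phi$). You skip the auxiliary convex body entirely: the Young--Fenchel inequality combined with evenness gives $\phi(x)+\phi^*(y)\ge|\langle x,y\rangle|$, which you integrate against $\mu$ to get a linear-in-$|y|$ lower bound on $\phi^*$, and the crude bound $\phi^*\ge 0$ (from $\phi(o)=0$) handles the small-$|y|$ regime; the case split at $|y|=1/t_0$ then produces the same pointwise inequality $\phi^*(y)\ge t_0|y|-1$ without ever speaking of $K_\phi$ or $r_\phi$. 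Both approaches lean on the same two structural facts (evenness, $\phi(o)=0$) and the same final step (exponentiate, multiply by $\omega$, integrate, monotonicity of $\mathcal{F}_\omega$). The paper's version has the advantage of isolating a geometrically meaningful quantity $r_\phi$, at the cost of having to argue $0<r_\phi<\infty$; yours is more self-contained and arguably more elementary, at the cost of a less transparent appearance for the precise constant $t_0$, which you correctly flag as the one nontrivial bit of algebra.
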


\begin{proof} Let $K_{\phi}=\{x\in\mathbb{R}^n:\phi(x)\le 1\}$ and define
$r_{\phi}=\min_{v\in S^{n-1}}h_{K_{\phi}}(v)$. Since $\phi$ is $\mu$-integrable, it is finite in a neighborhood of the origin. This, in combination with the convexity of $\phi$ and that $\phi(o)=0$, implies $r_\phi>0$. Since $\widetilde{V}_\omega(e^{-\phi^\ast})>0$, the function $\phi^*$ is not almost everywhere $+\infty$. This and the fact that $\phi^*$ is even and convex show that $\phi^*$ is finite in a neighborhood of the origin. This, the fact that $(\phi^\ast)^\ast=\phi$, and the definition of Legendre transform, show $\phi\rightarrow \infty$ as $|x|\rightarrow \infty$. Thus $0<r_{\phi}<\infty$. (As a matter of fact, $K_\phi\in \mathcal{K}_o^n$ is origin-symmetric.)

Let us first claim that
\begin{align}\label{r-upperbound}
\frac{a}{e}\leq \mathcal{F}_\omega(r_{\phi}).
\end{align} 
Indeed, 
\begin{align*}
\phi^\ast(y)=\sup_{x\in\mathbb{R}^n}\big\{\langle x,y\rangle-\phi(x)\big\}\ge\sup_{x\in r_\phi B_2^n}\big\{\langle x,y\rangle-\phi(x)\big\}\ge r_{\phi}|y|-1
\end{align*}
for any $y\in\mathbb{R}^n$. From this and \eqref{def-cal-H} , one gets
\begin{align}
a\le \widetilde{V}_\omega(e^{-\phi^\ast}) \leq \int_{\R^n} e^{1-r_{\phi}|y|}\omega(y) \,dy=e \mathcal{F}_\omega(r_{\phi}),
\end{align}
which implies \eqref{r-upperbound}.

On the other hand, choose $v_0\in S^{n-1}$ such that $r_{\phi}=h_{K_{\phi}}(v_0)$.  
Note that
if $x\in\mathbb{R}^n$ satisfies $|\langle x, v_0\rangle|> r_{\phi}$, then $x\notin K_{\phi}$. 
For any $x\in\mathbb{R}^n$ with $|\langle x, v_0\rangle|>2r_{\phi}$, by the convexity of $\phi$ and $\phi(o)=0$, one has
\begin{align*}
\frac{2r_{\phi}}{|\langle x, v_0\rangle|}{\phi}(x)
\ge{\phi}\left(\frac{2r_{\phi}}{|\langle x, v_0\rangle|}x+\left(1-\frac{2r_{\phi}}{|\langle x, v_0\rangle|}\right)o\right)
={\phi}\left(\frac{2r_{\phi}}{|\langle x, v_0\rangle|}x\right)\ge 1,
\end{align*}
where
the last inequality is due to $\frac{2r_{\phi}}{|\langle x, v_0\rangle|}x\notin K_{\phi}$. Therefore, since $\phi$ is nonnegative, for all $x\in \R^n$,
\begin{align}\label{in-r-varphi}
2r_{\phi}(\phi(x)+1)\ge|\langle x,v_0\rangle|.
\end{align} 
Integrating both sides with respect to the measure $\mu$, one has
\begin{align}
r_{\phi}\ge
\frac{\int_{\mathbb{R}^n}|\langle x,v_0\rangle|d\mu}{2\int_{\mathbb{R}^n}\phi(x)d\mu+2|\mu|}\ge
\frac{\zeta_{\mu}}{2\int_{\mathbb{R}^n}\phi(x)d\mu+2|\mu|}.
\end{align}
Since $\mathcal{F}_\omega$ is monotone decreasing, this implies
\begin{equation}\label{eq 8153}
	\mathcal{F}_\omega(r_{\phi})\leq \mathcal{F}_\omega\left(\frac{\zeta_{\mu}}{2\int_{\mathbb{R}^n}\phi(x)d\mu+2|\mu|}\right).
\end{equation}
The desired \eqref{H-1} now follows from \eqref{r-upperbound} and \eqref{eq 8153}. 
\end{proof}

The following lemma asserts that under certain conditions on $\mu$, the solution $\phi_0$ found in Lemma \ref{theorem-with-0} must be strictly positive for sufficiently large $a>0$.

\begin{lemma}\label{5.8} Let $\omega\in\mathscr{G}$ and $\mu\in\mathscr{M}$. Assume $\mu$ is such that 
\begin{align}\label{limit-condition}
\liminf_{t\to 0^+ } \Big(\mathcal{F}_\omega(t) e^{-\frac{\zeta_{\mu}}{2|\mu|t}}\Big)=0,
\end{align}
where $\zeta_{\mu}$ is the constant given by \eqref{c0}. There exists $a>0$ sufficiently large such that if $\phi_0\in \LP$ is a convex, lower semi-continuous solution to the optimization problem \eqref{optim.P} found in Lemma \ref{theorem-with-0}, then $\phi_0>0$ and $\widetilde{V}_\omega(e^{-\phi_0^\ast})=a$.
\end{lemma}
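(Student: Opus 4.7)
First I would reduce strict positivity of $\phi_0$ to positivity at the origin: since $\phi_0\in\LP$ is nonnegative, even, and convex, one has $\phi_0(o)\le\tfrac12(\phi_0(x)+\phi_0(-x))=\phi_0(x)$, so $\phi_0(o)>0$ forces $\phi_0>0$ on all of $\R^n$. Hence the first half reduces to finding some large $a$ for which every admissible convex lower semi-continuous minimizer of \eqref{optim.P} has $\phi_0(o)>0$.

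The argument for $\phi_0(o)>0$ is by contradiction, pairing Lemma~\ref{Le.H-1} (which gives a lower bound on $\int\phi_0\,d\mu$ whenever $\phi_0(o)=0$) against the minimality-based upper bound obtained from a radial competitor modeled on \eqref{tidle-varphi}. Using \eqref{limit-condition}, fix a sequence $t_n\to 0^+$ with $\mathcal{F}_\omega(t_n)\,e^{-\zeta_\mu/(2|\mu|t_n)}\to 0$, and put $a_n:=e\,\mathcal{F}_\omega(t_n)$. If at $a=a_n$ a solution $\phi_0$ satisfies $\phi_0(o)=0$, then Lemma~\ref{Le.H-1}, together with the strict monotonicity of $\mathcal{F}_\omega$ (immediate from $\omega>0$), yields $\int\phi_0\,d\mu\ge \zeta_\mu/(2t_n)-|\mu|$. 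Now fix any $s_0>0$; for $n$ large, $b_{s_0}\le a_n$, and the function $\widetilde\phi(x):=\log(a_n/b_{s_0})+s_0|x|$ lies in $\LP$ with Legendre transform $\widetilde\phi^\ast=-\log(a_n/b_{s_0})+\chi_{s_0\ball}$, so $\VO(e^{-\widetilde\phi^\ast})=(a_n/b_{s_0})\,b_{s_0}=a_n$ by \eqref{int-t-ball-1}. Admissibility of $\widetilde\phi$ and minimality of $\phi_0$ then yield $\int\phi_0\,d\mu\le|\mu|\log(a_n/b_{s_0})+s_0\int_{\R^n}|x|\,d\mu$. After substituting $a_n=e\mathcal{F}_\omega(t_n)$ and combining the two bounds, one arrives at
\[
\mathcal{F}_\omega(t_n)\,e^{-\zeta_\mu/(2|\mu|t_n)}\;\ge\; b_{s_0}\,\exp\!\Bigl(-2-\tfrac{s_0}{|\mu|}\!\int_{\R^n}|x|\,d\mu\Bigr),
\]
whose right-hand side is a strictly positive constant while by construction the left-hand side tends to $0$, a contradiction for all sufficiently large $n$. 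Thus $\phi_0(o)>0$ at $a=a_n$ for $n$ large; since $a_n\to\infty$ whenever $\int_{\R^n}\omega\,dx=\infty$ (the complementary case being immediate, as $\phi_0(o)=0$ already forces $\VO(e^{-\phi_0^\ast})\le\int_{\R^n}\omega\,dx$), this delivers the desired large $a$.

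For the saturation assertion $\VO(e^{-\phi_0^\ast})=a$, I would argue by perturbation: assuming $\VO(e^{-\phi_0^\ast})>a$, pick $\varepsilon\in(0,\phi_0(o))$ small enough that $e^{-\varepsilon}\VO(e^{-\phi_0^\ast})\ge a$ still holds. Then $\widetilde\psi:=\phi_0-\varepsilon\in\LP$ is even, convex, and nonnegative (using $\phi_0\ge\phi_0(o)>\varepsilon$), and satisfies $\widetilde\psi^\ast=\phi_0^\ast+\varepsilon$, so $\VO(e^{-\widetilde\psi^\ast})=e^{-\varepsilon}\VO(e^{-\phi_0^\ast})\ge a$, making $\widetilde\psi$ admissible in \eqref{optim.P}. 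But $\int\widetilde\psi\,d\mu=\int\phi_0\,d\mu-\varepsilon|\mu|<\int\phi_0\,d\mu$, contradicting the minimality of $\phi_0$.

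The main obstacle is the first step: Lemma~\ref{Le.H-1} alone furnishes only one side of the needed inequality, and one must manufacture a competitor whose upper bound dovetails precisely with the lower bound along the subsequence supplied by \eqref{limit-condition}. The calibration $a_n=e\mathcal{F}_\omega(t_n)$ is what makes the algebraic rearrangement collapse exactly onto the quantity hypothesized to vanish. Both properties defining $\mathscr{M}$ enter critically here: $\zeta_\mu>0$ (needed in Lemma~\ref{Le.H-1}) and $\int|x|\,d\mu<\infty$ (to control the linear term of the competitor).
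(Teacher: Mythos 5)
Your proof is correct and takes essentially the same approach as the paper: pair the lower bound from Lemma~\ref{Le.H-1} on $\int\phi_0\,d\mu$ (valid when $\phi_0(o)=0$) against the upper bound furnished by the radial competitor, and the resulting algebraic collapse contradicts \eqref{limit-condition}, with the saturation $\widetilde{V}_\omega(e^{-\phi_0^\ast})=a$ obtained by the same $\varepsilon$-shift perturbation. The only cosmetic difference is your parametrization --- you select a sequence $t_n$ realizing the liminf and set $a_n=e\,\mathcal{F}_\omega(t_n)$ (invoking strict monotonicity of $\mathcal{F}_\omega$ to invert), whereas the paper lets $a$ vary and changes variables to $t(a)$ at the end; both produce the same uniform positive lower bound on $\mathcal{F}_\omega(t)\,e^{-\zeta_\mu/(2|\mu|t)}$.
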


\begin{proof}
For each $a>0$, let $\phi_{0,a}\in \LP$ be the convex, lower semi-continuous solution to the optimization problem \eqref{optim.P} found in Lemma \ref{theorem-with-0}.  Recall that $\phi_{0,a}$ is even and therefore achieves its minimum at the origin. We argue by contradiction, and assume for any $a>1$, we have $\phi_{0,a}(o)=0$. Thus, by Lemma \ref{Le.H-1}, we have
\begin{equation}
\label{eq 8154}
	\frac{a}{e} \le \mathcal{F}_\omega\left(\frac{\zeta_{\mu}}{2\int_{\mathbb{R}^n}\phi_{0,a} d\mu+2|\mu|}\right).
\end{equation}

According to \eqref{eq 8151}, there exists $t_0>0$ such that 
 that $0<b_{t_0}\leq 1<a$. As computed before, the function
\begin{align}
\widetilde{\phi}_a(x)=\log a-\log b_{t_0}+t_0|x|,
\end{align} 
satisfies the constraints of the optimization problem \eqref{optim.P}. Since $\phi_{0,a}$ is a minimizer, we have
\begin{equation}
\label{eq 8155}
	\int_{\R^n} \phi_{0,a}d\mu \leq \int_{\R^n}\widetilde{\phi}_ad\mu = (\log a-\log b_{t_0})|\mu| + t_0 \int_{\R^n}|x|d\mu=|\mu|\log a+c_1,
\end{equation}
where $c_1= -\log b_{t_0}|\mu|+t_0\int_{\R^n}|x|d\mu$ is independent of $a$.

Equations \eqref{eq 8154}, \eqref{eq 8155}, and the monotonicity of $\mathcal{F}_\omega$ imply
\begin{equation}
\label{8157}
	\frac{a}{e}\leq \mathcal{F}_\omega\left(\frac{\zeta_{\mu}}{2|\mu|\log a+2c_1+2|\mu|}\right)=\mathcal{F}_\omega\left(\frac{\zeta_{\mu}}{2|\mu|\log a+c_2}\right),
\end{equation}
where $c_2 = 2c_1+2|\mu|$ is independent of $a$. Let $t=t(a)$ be 
$$t=\frac{\zeta_{\mu}}{2|\mu|\log a+c_2}.$$
Note that $\lim_{a\rightarrow \infty} t=0$ and $a=e^{\frac{\zeta_{\mu}-c_2 t}{2|\mu|t}}$. Hence, \eqref{8157} implies
\begin{equation}
	 e^{-1-\frac{c_2 }{2|\mu|}}\leq \mathcal{F}
	_\omega(t)e^{-\frac{\zeta_{\mu}}{2|\mu|t}}.
\end{equation}
Letting $a\rightarrow \infty$ (or, equivalently, $t\rightarrow 0$), we have a contradiction to \eqref{limit-condition}. Hence, there exists $a>0$ such that $\phi_0=\phi_{0,a}>0$.

We now show $\widetilde{V}_\omega(e^{-\phi_0^\ast})=a$. Assume (for sake of contradiction) that $\widetilde{V}_\omega(e^{-\phi_0^\ast})>a$. Choose a constant $\varepsilon>0$ small enough such that $\phi_\varepsilon(x)=\phi_0(x)-\varepsilon>0$  for any $x\in\mathbb{R}^n$. This can be done since $\inf \phi_0=\phi_0(o)>0$. Hence, $\phi_{\varepsilon}\in \LP$ is a lower semi-continuous convex function, and $\phi_{\varepsilon}^*=\phi_0^*+\varepsilon$. This further gives, for $\varepsilon$ small enough, 
\begin{align*}
\widetilde{V}_\omega(e^{-\phi_\varepsilon^\ast})
=e^{-\varepsilon}\widetilde{V}_\omega(e^{-\phi_0^\ast})
\ge a.
\end{align*} 
However, $
\int_{\mathbb{R}^n}\phi_0d\mu>\int_{\mathbb{R}^n}\phi_\varepsilon d\mu$. This contradicts the minimality of $\phi_0$ to the optimization problem (\ref{optim.P}). 
\end{proof}

We are now in the position to state and prove our main theorem.

\begin{theorem}\label{Thm 5.9} Let $\omega\in\mathscr{G}$ be an even function and $\mu\in\mathscr{M}$ be a nonzero even Borel measure on $\R^n$. If \eqref{limit-condition} holds, then there exists $f_0\in \LC$ such that 
\begin{align}\label{sol-const-11-23}
\mu= {\widetilde{C}^{e}_{\omega}(f_0,\cdot)}.
\end{align}
\end{theorem}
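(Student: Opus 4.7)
The plan is to run a Lagrange multiplier argument on the optimization problem \eqref{optim.P}. Under the hypothesis \eqref{limit-condition}, Lemma \ref{5.8} yields a number $a>0$ and an even, lower semi-continuous, convex minimizer $\phi_0\in\LP$ of \eqref{optim.P} satisfying $\phi_0(o)>0$ (hence $\phi_0\ge\phi_0(o)>0$ on its effective domain, by convexity and evenness) and $\widetilde{V}_\omega(e^{-\phi_0^*})=a$. Set $f_0:=e^{-\phi_0^*}$. I would first verify $f_0\in\LC$: $\phi_0^*$ is lower semi-continuous and convex, so $f_0$ is upper semi-continuous and log-concave; $\widetilde{V}_\omega(f_0)=a>0$ together with $\omega>0$ gives $f_0\not\equiv 0$; and $f_0$ is integrable because $\phi_0$ is bounded in a neighborhood of $o\in M_\mu$ (a consequence of $\phi_0$ being $\mu$-integrable and $\mu\in\mathscr{M}$), which forces $\phi_0^*$ to grow at least linearly at infinity and hence $f_0$ to satisfy the integrability criterion \eqref{In.CF13}.

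For the variational step, fix any bounded continuous even function $g:\R^n\to\R$ and set $\phi_t:=\phi_0+tg$. Because $\phi_0\ge\phi_0(o)>0$ and $g$ is bounded, $\phi_t\ge0$ for all sufficiently small $|t|$, and $\phi_t\in\LP$ since $g$ is $\mu$-integrable. Applying Lemma \ref{rotem} to $\phi_0$ and $g$ gives, at every $y$ where $\phi_0^*$ is differentiable,
\[
\left.\frac{d}{dt}\right|_{t=0}\phi_t^*(y)=-g(\nabla\phi_0^*(y)).
\]
The main technical task is to pass from this pointwise identity to a derivative under the integral. A dominating function is provided by the bound $|\phi_t^*-\phi_0^*|\le|t|\,\|g\|_\infty$, which follows from the order-reversing property of the Legendre transform applied to $|\phi_t-\phi_0|\le|t|\,\|g\|_\infty$. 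This yields
\[
\left|\frac{e^{-\phi_t^*(y)}-e^{-\phi_0^*(y)}}{t}\right|\le\|g\|_\infty\, e^{|t|\,\|g\|_\infty}\, e^{-\phi_0^*(y)},
\]
which is $\omega$-integrable for small $|t|$ by Lemma \ref{Le.finiteness}. Dominated convergence combined with the push-forward description \eqref{eud-cur-1} then produces
\[
\left.\frac{d}{dt}\right|_{t=0}\widetilde{V}_\omega(e^{-\phi_t^*})=\int_{\R^n}g(\nabla\phi_0^*(y))\, e^{-\phi_0^*(y)}\omega(y)\,dy=\int_{\R^n}g\,d\widetilde{C}_\omega^{\,e}(f_0,\cdot).
\]

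The final step is a Lagrange multiplier deduction together with a rescaling. Since $\phi_0$ minimizes $\int\phi\,d\mu$ over $\LP$ subject to the active constraint $\widetilde{V}_\omega(e^{-\phi^*})\ge a$, standard first-order optimality furnishes a constant $\lambda\ge 0$ such that for every bounded continuous even $g$,
\[
\int_{\R^n}g\,d\mu=\lambda\int_{\R^n}g\,d\widetilde{C}_\omega^{\,e}(f_0,\cdot).
\]
Testing with $g\equiv 1$ shows $\lambda=|\mu|/a>0$; a standard density argument extends this identity to all bounded even Borel $g$, so $\mu=\lambda\,\widetilde{C}_\omega^{\,e}(f_0,\cdot)$ as even finite Borel measures on $\R^n$. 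Since rescaling $f_0$ by $\tau>0$ shifts its convex potential by $-\log\tau$ while leaving its gradient unchanged, a direct computation from \eqref{eud-cur-1} gives $\widetilde{C}_\omega^{\,e}(\tau f_0,\cdot)=\tau\,\widetilde{C}_\omega^{\,e}(f_0,\cdot)$; choosing $\tau=\lambda$ yields $f_0':=\lambda f_0\in\LC$ with $\mu=\widetilde{C}_\omega^{\,e}(f_0',\cdot)$. The most delicate points are the dominated-convergence justification of the variational formula and the Lagrange-multiplier deduction; strict positivity of $\phi_0$ (from Lemma \ref{5.8}) is essential, because it allows two-sided perturbations inside $\LP$, without which one would obtain only an inequality rather than an equality.
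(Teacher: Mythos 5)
Your argument follows the same route as the paper: obtain the minimizer $\phi_0$ with $\phi_0>0$ and $\widetilde{V}_\omega(e^{-\phi_0^*})=a$ from Lemma \ref{5.8}, perturb $\phi_t=\phi_0+tg$, differentiate $\widetilde{V}_\omega(e^{-\phi_t^*})$ via Lemma \ref{rotem} and dominated convergence (your dominating bound $|\phi_t^*-\phi_0^*|\le|t|\|g\|_\infty$ is exactly the paper's estimate \eqref{varphi-t^ast}), and then rescale using the $1$-homogeneity of $\widetilde{C}_\omega^e$. The computations you do carry out are all correct.

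The one place where you gloss over a real step is the sentence ``standard first-order optimality furnishes a constant $\lambda\ge0$.'' There is no off-the-shelf Lagrange multiplier theorem you can point to here: the ambient set $\LP$ is not a linear space, the constraint functional $\phi\mapsto\widetilde{V}_\omega(e^{-\phi^*})$ is only defined through the Legendre transform, and a constraint-qualification hypothesis would have to be verified in this infinite-dimensional, non-smooth setting. The paper avoids any such appeal by building the multiplier directly: it normalizes the perturbation to stay on the constraint surface, setting
\[
\widetilde{\phi}_t=\phi_t+\log a-\log\widetilde{V}_\omega\bigl(e^{-\phi_t^*}\bigr),
\]
so that $\widetilde{V}_\omega(e^{-\widetilde{\phi}_t^*})=a$ for all small $t$ and $\widetilde{\phi}_t\in\LP$ (this last point uses $\phi_0>0$ in exactly the way you describe to keep $\widetilde{\phi}_t\ge0$). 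Differentiating the unconstrained quantity $\int\widetilde{\phi}_t\,d\mu$ at $t=0$ and setting it to zero then yields
\[
\int_{\R^n}g\,d\mu=\frac{|\mu|}{\widetilde{V}_\omega(e^{-\phi_0^*})}\int_{\R^n}g\bigl(\nabla\phi_0^*\bigr)\,e^{-\phi_0^*}\omega\,dx,
\]
which produces the multiplier $\lambda=|\mu|/a$ as an output of the computation rather than as a hypothesis of a cited theorem. You should replace the appeal to ``standard first-order optimality'' with this explicit normalization; everything else in your write-up (including the observation that strict positivity of $\phi_0$ is what licenses two-sided perturbations) matches the intended argument.
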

\begin{proof} By Lemma \ref{5.8}, there exists $a>0$ such that the solution $\phi_0$ to the optimization problem \eqref{optim.P} found in Lemma \ref{theorem-with-0} is even, convex, lower semi-continuous, $\phi_0>0$, and $\widetilde{V}_\omega(e^{-\phi_0^\ast})=a$.

Let $g:\mathbb{R}^n\to\mathbb{R}$ be an even, compactly supported, continuous function. Define  $\phi_t(x)=\phi_0(x)+tg(x)$ for  $t\in[-t_0,t_0]$ where $t_0\in(0,1)$ is such  that $\phi_t\in \LP$ for all $t\in [-t_0, t_0]$. This is possible because $g$ is continuous and compactly supported and consequently, $|g(x)|\le M$ for all $x\in\mathbb{R}^n$ for some $M>0$. Moreover, for any $t\in[-t_0,t_0]$ and $x, y\in \R^n$, \begin{align}
 |\phi_t(x)-\phi_0(x)| \leq |t|M \ \ \mathrm{and} \ \ 
|\phi_t^\ast(y)-\phi_0^\ast(y)|\le|t|M.  \label{varphi-t^ast}
\end{align}

Define 
$$\widetilde{\phi}_t=\phi_t+\log a-\log \widetilde{V}_\omega(e^{-\phi_t^\ast}).$$ 
Note that since $\widetilde{V}_\omega(e^{-\phi_0^\ast})=a$, if $t_0\in (0,1)$ is sufficiently small, then we have for all $t\in [-t_0, t_0]$, 
\begin{align}
\widetilde{\phi}_t\geq \phi_0-|t|M+\log a-\log \widetilde{V}_\omega(e^{-\phi_0^\ast+|t|M})=\phi_0-2|t|M\geq 0. \label{bound-23-f1}
\end{align}
Thus, $\widetilde{\phi}_t\in \LP$. Moreover,  \begin{align*}
\widetilde{\phi}_t^\ast
=\phi_t^\ast-\log a+\log \widetilde{V}_\omega(e^{-\phi_t^\ast})
\ \ \mathrm{and} \ \  
\widetilde{V}_\omega(e^{-\widetilde{\phi}_t^\ast})=a.
\end{align*} 
In particular, $\widetilde{\phi}_t$ satisfies the constraints of the optimization problem \eqref{optim.P}. It can also be checked, by \eqref{varphi-t^ast}, that for $t\in[-t_0,t_0]$, 
 \begin{align*}
\left|\frac{e^{-\phi_t^\ast}-e^{-\phi_0^\ast}}{t}\right| 
\le e^{-\phi_0^\ast}\cdot
\max\left\{\left|\frac{e^{tM}-1}{t}\right|,\left|\frac{e^{-tM}-1}{t}\right|\right\}.
\end{align*} 
Clearly, the maximum on the right-hand side is uniformly bounded with respect to $t\in[-t_0,t_0]\backslash \{0\}$. Hence, the  dominated convergence theorem can be applied to get
\begin{align}\label{variation,t=0}
\frac{d}{dt}\widetilde{V}_\omega(e^{-\phi_t^\ast})\bigg|_{t=0}
=-\int_{\mathbb{R}^n}\frac{d\phi^\ast_t}{dt}\bigg|_{t=0} e^{-\phi_0^\ast(x)}\omega(x)dx
=\int_{\mathbb{R}^n}g(\nabla\phi_0^\ast(x)) e^{-\phi_0^\ast(x)}\omega(x)dx, 
\end{align} where the last equality follows from Lemma \ref{rotem}. 

Note that $\widetilde{\phi}_0=\phi_0$.
Following a calculation similar to that in \eqref{bound-23-f1}, one gets \begin{align*}
\left|\frac{\widetilde{\phi}_t-\widetilde{\phi}_0}{t}\right|
\le 2M.
\end{align*} Again one can apply the dominated convergence theorem to get 
\begin{equation}
\label{eq 8159}
\begin{aligned}
	\frac{d}{dt}\int_{\mathbb{R}^n} \widetilde{\phi}_t\,d\mu\bigg|_{t=0}
&=\int_{\mathbb{R}^n}\frac{d \widetilde{\phi}_t}{dt} \bigg|_{t=0}d\mu\\
&=\int_{\mathbb{R}^n}g(x)d\mu-
\frac{|\mu|}{\widetilde{V}_\omega(e^{-\phi_0^\ast})}\int_{\mathbb{R}^n}g(\nabla\phi_0^\ast(x)) e^{-\phi_0^\ast(x)}\omega(x)dx,
\end{aligned}
\end{equation} 
where we have used $\phi_t(x)=\phi_0(x)+tg(x)$ and \eqref{variation,t=0} in the last equality. As $\phi_0$ is the solution of the minimization problem (\ref{optim.P}), one must have $$\frac{d}{dt}\int_{\mathbb{R}^n} \widetilde{\phi}_t\,d\mu\bigg|_{t=0}=0$$ and thus, by \eqref{eq 8159}, 
\begin{align*}
\int_{\mathbb{R}^n}g(x)d\mu(x)=\frac{|\mu|}{\widetilde{V}_\omega(e^{-\phi_0^\ast})}\int_{\mathbb{R}^n}g(\nabla\phi_0^\ast(x)) e^{-\phi_0^\ast(x)}\omega(x)dx=
\frac{|\mu|}{\widetilde{V}_\omega(e^{-\phi_0^\ast})}\int_{\mathbb{R}^n}g(x) d\widetilde{C}_\omega^e(e^{-\phi_0^\ast},x)
\end{align*}
for any even compactly supported continuous function $g$ on $\mathbb{R}^n$. As both $\omega$ and $\phi_0^*$ are even, the measure $\widetilde{C}_\omega^e(e^{-\phi_0^\ast},\cdot)$ is an even Borel measure. This ensures that
\begin{align*}
\mu= \frac{|\mu|}{V_{\omega}(e^{-\phi_0^\ast})} \widetilde{C}_\omega^e(e^{-\phi_0^\ast},\cdot).
\end{align*} 
Let $f_0=\frac{|\mu|}{V_{\omega}(e^{-\phi_0^\ast})}e^{-\phi_0^\ast}$. Since $\phi_0$ is finite in a neighborhood of the origin, one can immediately obtain that $f_0\in\LC$ by applying \cite[Theorem 11.8 (c)]{RW1998} and the fact that \eqref{In.CF13} holds for $f_0$ is equivalent to the integrability of $f_0$. From the homogeneity of the dual Orlicz curvature measure, we obtain
$\mu=\widetilde{C}_\omega^e(f_0,\cdot).$ 
This completes the proof.
\end{proof}

\begin{remark}\label{remark 8.29.1}
	If $\omega$ is such that $\mathcal{F}_\omega(t)$ grows sufficiently slow as $t\rightarrow 0^+$, then condition \eqref{limit-condition} is satisfied for any $\mu \in \mathscr{M}$. 
	In particular, if $\int_{\R^n}\omega dx$ is finite, the function $\mathcal{F}_\omega$ is a bounded function and therefore condition \eqref{limit-condition} is trivially satisfied for any $\mu\in \mathscr{M}$. This, in particular, contains the case where $\omega(x)=e^{-\frac{|x|^2}{2}}$ is the Gaussian density.
	
	It is also simple to see that condition \eqref{limit-condition} is trivially satisfied, if $\omega(x)=|x|^{q-n}$ for $q>0$. Indeed, using polar coordinates, 
	\begin{align*}\mathcal{F}(t)&=\int_{\R^n} e^{-t|x|}|x|^{q-n}dx =\int_{\sphere} \int_{0}^\infty e^{-tr}r^{q-1}dr\,du \\ &
=t^{-q} \int_{\sphere}\int_{0}^\infty e^{-s}s^{q-1}\,ds\,du=t^{-q}n V_n(\ball) \Gamma(q),
\end{align*}
where $\Gamma(\cdot)$ is the Gamma function. In other words, $\mathcal{F}(t)$ is of polynomial growth near the origin. Hence, condition \eqref{limit-condition} is trivially satisfied for any $\mu\in \mathscr{M}$. In particular, in this case, Theorem 
\ref{Thm 5.9} recovers those in  \cite{CK15, HLXZ}.
\end{remark}

\vskip 2mm \noindent  {\bf Acknowledgement.}  The
research of NF was supported by  NSFC (No.12001291) and the Fundamental Research Funds for the Central Universities (No.531118010593). The
research of DY was supported by a NSERC grant, Canada.
 The research of YZ was supported, in part, by NSF grant DMS-2132330. 
 The research of ZZ was supported by NSFC (No. 12301071) and the Science and Technology Research Program of Chongqing Municipal
Education Commission (No. KJQN202201339).

\end{document}